\pdfoutput=1
\documentclass[12pt]{article}
\usepackage{color}
\usepackage{graphicx}
\usepackage{amsmath,amsfonts,amssymb,graphics,amsthm}
\usepackage{hyperref}
\usepackage{comment}
\usepackage{tabularx}
\usepackage[protrusion=true,expansion=true]{microtype}
\usepackage{enumerate}

\usepackage[margin=1.2in]{geometry}

\setcounter{tocdepth}{2}

\hypersetup{
    colorlinks=false,
    linktocpage,
    }

\numberwithin{equation}{section}

\newtheorem{theorem}{Theorem}[section]

\newtheorem{lemma}[theorem]{Lemma}
\newtheorem{proposition}[theorem]{Proposition}
\newtheorem{prop}[theorem]{Proposition} 
\newtheorem{lem}[theorem]{Lemma} 

\theoremstyle{remark}
\theoremstyle{remark}\newtheorem{defn}[theorem]{Definition}
\theoremstyle{remark}\newtheorem{remark}[theorem]{Remark}
\theoremstyle{remark}\newtheorem{notation}[theorem]{Notation}

\def\@rst #1 #2other{#1}
\newcommand\MR[1]{\relax\ifhmode\unskip\spacefactor3000 \space\fi
  \MRhref{\expandafter\@rst #1 other}{#1}}
\newcommand{\MRhref}[2]{\href{http://www.ams.org/mathscinet-getitem?mr=#1}{MR#2}}

\def\MR#1{\href{http://www.ams.org/mathscinet-getitem?mr=#1}{MR#1}}

\newcommand{\C}{\mathbb{C}}
\newcommand{\D}{\mathbb{D}}
\newcommand{\E}{\mathbb{E}}

\newcommand{\R}{\mathbb{R}}
\renewcommand{\P}{\mathbb{P}}
\newcommand{\bbH}{\mathbb{H}}

\renewcommand{\Im}{\mathrm{Im}}

\DeclareMathOperator{\diam}{diam}

\DeclareMathOperator{\SLE}{SLE}
\DeclareMathOperator{\CLE}{CLE}

\def\cW{\mathcal{W}}

\def\cS{\mathcal{S}}

\def\cG{\mathcal{G}}

\newcommand{\aryb}{\begin{eqnarray*}}
\newcommand{\arye}{\end{eqnarray*}}
\def\alb#1\ale{\begin{align*}#1\end{align*}}
\newcommand{\eqb}{\begin{equation}}
\newcommand{\eqe}{\end{equation}}
\newcommand{\eqbn}{\begin{equation*}}
\newcommand{\eqen}{\end{equation*}}

\newcommand{\BB}{\mathbb}
\newcommand{\ol}{\overline}

\newcommand{\op}{\operatorname}

\newcommand{\frk}{\mathfrak}
\newcommand{\eqD}{\overset{d}{=}}
\newcommand{\ep}{\epsilon}
\newcommand{\rta}{\rightarrow}

\newcommand{\wt}{\widetilde}
\newcommand{\wh}{\widehat} 
\newcommand{\mcl}{\mathcal}

\newcommand{\bdy}{\partial}

\DeclareMathAlphabet{\mathpzc}{OT1}{pzc}{m}{it}

\newcommand{\hwedge}{\mathpzc h}
\newcommand{\xwedge}{\mathpzc x}

\begin{document}

\author{
\begin{tabular}{c}Ewain Gwynne\\[-5pt]\small MIT\end{tabular}\;
\begin{tabular}{c}Nina Holden\\[-5pt]\small MIT\end{tabular}\;
\begin{tabular}{c}Jason Miller\\[-5pt]\small Cambridge\end{tabular}\;
\begin{tabular}{c}Xin Sun\\[-5pt]\small MIT\end{tabular}}

\title{Brownian motion correlation in the peanosphere~for~$\kappa >8$}
\date{}

\maketitle

\begin{abstract} 
The peanosphere (or ``mating of trees") construction of Duplantier, Miller, and Sheffield encodes certain types of $\gamma$-Liouville quantum gravity (LQG) surfaces ($\gamma \in (0,2)$) decorated with an independent $\SLE_{\kappa}$ ($\kappa = 16/\gamma^2 > 4$) in terms of a correlated two-dimensional Brownian motion and provides a framework for showing that random planar maps decorated with statistical physics models converge to LQG decorated with an $\SLE$.  Previously, the correlation for the Brownian motion was only explicitly identified as $-\cos(4\pi/\kappa)$ for $\kappa \in (4,8]$ and unknown for $\kappa > 8$.  The main result of this work is that this formula holds for all $\kappa > 4$.  This supplies the missing ingredient for proving convergence results of the aforementioned type for $\kappa > 8$.  Our proof is based on the calculation of a certain tail exponent for SLE$_{\kappa}$ on a quantum wedge and then matching it with an exponent which is well-known for Brownian motion.
\end{abstract}

\tableofcontents

\parindent 0 pt
\setlength{\parskip}{0.25cm plus1mm minus1mm}

\section{Introduction}
\label{sec:intro}

Suppose that $h$ is an instance of the Gaussian free field (GFF) on a planar domain $D$ and $\gamma\in(0,2)$. Formally the $\gamma$-Liouville quantum gravity (LQG) surface associated with $h$ is the Riemannian manifold with metric tensor given by
\begin{equation}
\label{eqn:lqg_metric}
e^{\gamma h(z)} (dx^2 + dy^2) ,
\end{equation}
where $dx^2 + dy^2$ denotes the Euclidean metric on $D$. This expression does not make literal sense since $h$ is a distribution and does not take values at points.  However, one can make sense of the volume form associated with~\eqref{eqn:lqg_metric} as a random measure via various regularization procedures, e.g.\ the ones used in \cite{shef-kpz}. The metric space structure of LQG has been constructed in the special case $\gamma=\sqrt{8/3}$ in \cite{lqg-tbm1} building on \cite{sphere-constructions} and, upon combining with \cite{tbm-characterization}, will be identified with the Brownian map in \cite{lqg-tbm2,lqg-tbm3}, but it remains an open problem to construct the metric for other values of $\gamma \in (0,2)$.

One of the main sources of significance of LQG is that it has been conjectured to describe the scaling limits of random planar maps decorated by statistical physics models.  This conjecture can be formulated in several different ways by specifying the topology. For example, one can view random planar maps as metric spaces and endow them with the Gromov-Hausdorff topology.  Convergence under this topology has been established in the case of uniformly random quadrangulations to the Brownian map in \cite{legall-uniqueness,miermont-brownian-map}. Combining with the aforementioned works gives the Gromov-Hausdorff convergence to $\sqrt{8/3}$-LQG. An alternative approach is to start off with a random planar map, embed it conformally into $\BB \C$ (e.g.\ via circle packing, Riemann uniformization, etc...) and show that the random area measure it induces (i.e., the pushforward of the uniform measure on the faces of the map) converges weakly to an LQG measure.  Establishing this type of convergence  is an open problem for any $\gamma\in (0,2)$. 

The work~\cite{wedges} takes a third approach through its \emph{peanosphere} or \emph{mating of trees} construction. More precisely, let $\gamma\in (0,2)$, $\kappa'=16/\gamma^2 >4$, and $(L_{t} ,R_{t} )_{t\in \R}$ be a correlated two-dimensional two-sided Brownian motion. Then $(L,R)$ encodes a pair of Brownian continuum random trees~\cite{aldous-crt1,aldous-crt2,aldous-crt3} with $L$ and $R$ as their contour functions. As explained in \cite[Section~1.1]{wedges}, one can glue the two trees together to obtain a topological sphere endowed with a measure and the space-filling peano curve which traces the interface between the two trees\footnote{This is the source of the name peanosphere.}. In \cite{wedges} the authors show that there is a canonical way of embedding this measure-endowed topological sphere into $\C\cup \{\infty\}$ such that the pushforward of the measure is a form of $\gamma$-LQG and the image of the spacing-filling curve is an independent space-filling form of Schramm's $\SLE$ \cite{schramm0} with parameter\footnote{We use the convention of~\cite{ig1,ig2,ig3,ig4} of writing $\kappa' > 4$ for the $\SLE$ parameter and $\kappa = 16/\kappa'$ for the dual parameter.} $\kappa'$ from $\infty$ to $\infty$ as defined in~\cite{ig4}; see also \cite{wedges}. Moreover, it is shown in \cite{wedges} that both the field $h$ and the space-filling $\SLE$ are a.s.\ determined by $(L,R)$.  That is, the peanosphere comes equipped with a canonical conformal structure.

It is proved in \cite{wedges} that for $\gamma\in[\sqrt{2},2)$ (equivalently, for $\kappa'\in(4,8]$) the correlation between $L$ and $R$ is given by $-\cos(4\pi/\kappa')\geq 0$.  The correlation between $L$ and $R$ for $\gamma\in(0,\sqrt{2})$ (equivalently, for $\kappa'>8$) is left as an open problem~\cite[Question~13.4]{wedges}. The main result of this paper is that the correlation between $L$ and $R$ is given by $-\cos(4\pi/\kappa')$ for all $\kappa' > 4$ (so that $L$ and $R$ are \emph{negatively} correlated for $\kappa' > 8$).

For $\kappa'\in (4,8]$, the peanosphere construction can be viewed as a continuum analogue of the bijection introduced by Sheffield in~\cite[Section~4.1]{shef-burger}, which encodes a critical Fortuin-Kasteleyn (FK) decorated planar map in terms of a word in a certain alphabet of five letters. Indeed, the manner in which the space-filling $\SLE_{\kappa'}$ path $\eta$ and the $\gamma$-LQG surface are encoded by $Z$ closely parallels the manner in which an FK planar map is described by a word under the bijection of~\cite{shef-burger} (see~\cite{wedges,gms-burger-cone,gwynne-miller-cle} for more details). This correspondence allows one to interpret various scaling limit statements for FK planar maps, as proven in~\cite{shef-burger,gms-burger-cone,gms-burger-local,gms-burger-finite}, as convergence results for FK decorated random planar maps to $\SLE$ decorated LQG with respect to the peanosphere topology; see also \cite{blr-exponents} for a calculation of some exponents associated with an FK planar map which match the corresponding exponents which can be derived in the continuum using \cite{wedges}.  Under this topology, two spanning tree decorated surfaces are said to be close if the contour functions of the tree/dual tree pairs are close.  On the FK planar map side, the tree/dual tree pair is generated using Sheffield's bijection~\cite{shef-burger} and in the continuum this pair is given by trees of GFF flow lines \cite{ig4} whose peano curve is space-filling $\SLE_{\kappa'}$.  This topology has been strengthened further using these constructions in \cite{gwynne-miller-cle}.

Recently the techniques of~\cite{shef-burger} have been generalized in~\cite{gkmw-burger} to the setting of random planar maps decorated with a certain type of spanning tree. It is in particular shown in \cite{gkmw-burger} that for a certain range of parameter values, the contour functions converge in the scaling limit to a \emph{negatively} correlated Brownian motion (which extends~\cite[Theorem~2.5]{shef-burger}). In another work~\cite{kmsw-bipolar}, it is shown that the height functions associated with the northwest tree and its dual tree which arise from a so-called bipolar orientation on a random planar map also converge to a certain pair of negatively correlated Brownian motions. The result of \cite{kmsw-bipolar} is strengthened (for the case of triangulations) in \cite{ghs-bipolar}, which shows convergence of two pairs of height functions to two pairs of negatively correlated Brownian motions, corresponding to two space-filling SLE curves traveling in a direction perpendicular (in the sense of imaginary geometry) to each other. In all of the above cases the correlation of the Brownian motion is explicit. Our main result allows us to interpret these limit results as convergence of random planar maps decorated with a statistical physics model to certain $\gamma$-LQG surfaces with $\gamma\in (0,\sqrt{2})$ decorated with an $\SLE_{\kappa'}$ with $\kappa' > 8$. 

Moreover, knowing the correlation of $(L,R)$ allows us to understand the interplay between two-dimensional Brownian motion and the space-filling $\SLE$ on top of the LQG surface at a quantitative level. For example,
 the KPZ-like formula established in~\cite{ghm-kpz} relates the Hausdorff dimension of an arbitrary random Borel set  $A\subset\BB C$ which is determined by the space-filling $\SLE_{\kappa'}$ (viewed modulo monotone reparameterization of time) in the peanosphere construction to the Hausdorff dimension of its pre-image under the Brownian motion $(L,R)$. This reduces the problem of computing the Hausdorff dimension of $A$ to the problem of computing the dimension of an (often much simpler) set defined in terms of $(L,R)$ (many examples of this type are given in \cite{ghm-kpz}). Our result implies that the formula derived in \cite{ghm-kpz} is valid for all $\kappa' > 4$ and not just $\kappa' \in (4,8]$.

Finally, we remark that our result supplies the missing ingredient in order to identify the correlation of the two-dimensional Brownian excursion appearing in the finite-volume version of the peanosphere construction~\cite[Theorem~1.1]{sphere-constructions} in the case $\gamma\in(0,\sqrt{2})$.

\bigskip

\noindent \textbf{Acknowledgements}
We thank Scott Sheffield for helpful discussions. We thank the Isaac Newton Institute for its hospitality during part of our work on this project. E.G.\ was supported by the U.S. Department of Defense via an NDSEG fellowship. N.H.\ was supported by a doctoral research fellowship from the Norwegian Research Council.  J.M.'s work was partially supported by DMS-1204894. X.S.\ was partially supported by NSF grant DMS-1209044.

\subsection{Main result}
\label{subsec:main}

 Now we give the formal statement of our main result. We will remind the  reader of the precise description of the objects involved in Section~\ref{subsec:pre}.
 
  Given $\gamma\in (0,2)$ and $\kappa'=16/\gamma^2$, let $\eta$ be a whole-plane space-filling $\op{SLE}_{\kappa'}$ from $\infty$ to $\infty$  (defined in~\cite[Sections 1.2.3 and 4.3]{ig4}; see also Section~\ref{sec:sle} of the present paper). Let $\gamma = 4/\sqrt{\kappa'}$ and let $\mcl C = (\BB C , h , 0, \infty)$ be a $\gamma$-quantum cone independent from $\eta$, as in~\cite[Section~4.3]{wedges} or Section~\ref{sec:surface} of the present paper. Let~$\mu_{h}$ and~$\nu_{h}$, respectively, be the $\gamma$-quantum area measure and $\gamma$-quantum boundary measure induced by $h$. Let $\wt{\eta} $ be the curve obtained by parameterizing $\eta$ by $\mu_h$-mass, so that $\wt{\eta} (0) =0$ and $\mu_{h}(\wt{\eta} ([t_1,t_2])) =t_2 - t_1$ for each $t_1 , t_2 \in \BB R$ with $t_1 <t_2$. 
 Let $Z_t = (L_t , R_t)$ denote the net change in the $\nu_{h}$-length of the left and right  boundaries of $\wt{\eta} ((-\infty,t])$ relative to time 0. Then $Z$ evolves as a two-sided Brownian motion with some correlation~\cite[Theorem~1.13]{wedges} and $Z$ a.s.\ determines the pair $(\eta , \mcl C)$ modulo rotation and scaling~\cite[Theorem~1.14]{wedges} (this is the mathematically precise formulation of the mating of trees/peanosphere construction described above). For $\gamma\in [\sqrt{2},2)$, by \cite[Theorem~1.13]{wedges} the correlation of $Z$ is $-\cos(4\pi/\kappa')$. Similar results are also proved in the upper half plane setting. See Section~\ref{sec:surface} for the definition of the quantum wedge.
  
 \begin{theorem}
 	\label{thm:main}
 	In the above setting, for $\gamma\in(0,\sqrt 2)$, the correlation  of $Z$ is still given by $  -\cos ( 4\pi/\kappa' ) $. Furthermore, suppose that $(\BB H , \hwedge, 0 , \infty)$ is a $3\gamma/2$-quantum wedge and~$\eta'$ is a chordal $\op{SLE}_{\kappa'}$ from $0$ to $\infty$ in $\BB H$ sampled independently of $\hwedge$ and let $\wt{\eta}'$ be the curve which arises by reparameterizing $\eta'$ by quantum mass with respect to $\hwedge$. Then the change in the left and right quantum boundary lengths of $\BB H\setminus \wt{\eta}'([0,t])$ with respect to $\hwedge$ evolve as a two-dimensional correlated Brownian motion with correlation $-\cos(4\pi/\kappa')$.
 \end{theorem}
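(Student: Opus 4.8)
The plan is to recast the identification of the correlation as the computation of a single tail exponent, to evaluate this exponent both for the correlated Brownian motion (where it is classical) and for the space-filling $\SLE_{\kappa'}$ on the quantum wedge (where it is new), and to match the two.

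\textbf{Reduction to the quantum wedge.} By \cite{wedges} it is already known that, in both the quantum cone and the quantum wedge setups, $Z$ is a Brownian motion with \emph{some} correlation $\rho\in(-1,1)$, and moreover that the joint law of the surface, the curve and $Z$ in the two setups is mutually absolutely continuous once restricted to a bounded initial segment of the curve near the root (the standard local equivalence between a $\gamma$-quantum cone decorated by whole-plane space-filling $\SLE_{\kappa'}$ and a $3\gamma/2$-quantum wedge decorated by chordal $\SLE_{\kappa'}$, in the spirit of \cite{wedges,ig4}). Since the correlation of a Brownian motion is determined by its restriction to any fixed time interval, it suffices to treat the $3\gamma/2$-quantum wedge $(\BB H,\hwedge,0,\infty)$ with chordal $\SLE_{\kappa'}$ $\eta'$ from the ``furthermore'' part of the statement; write $Z=(L,R)$ for the associated boundary length process, a Brownian motion with (a priori unknown) correlation $\rho$, and $\mu_{\hwedge},\nu_{\hwedge}$ for the quantum area and boundary length measures of $\hwedge$. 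It remains to show $\rho=-\cos(4\pi/\kappa')$.

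\textbf{The Brownian estimate.} Pass to a linear coordinate system in which $Z$ becomes a standard planar Brownian motion: let $A$ be a linear bijection of $\R^2$ with $AZ$ a standard planar Brownian motion. A direct computation with the covariance matrix of $Z_1$ shows that $A$ carries the closed first quadrant $\{L\ge0,\ R\ge0\}$ onto a closed infinite cone of opening angle $\beta:=\arccos(-\rho)\in(0,\pi)$ (note $\beta$ depends only on $\rho$, not on the individual variances of $L$ and $R$). Hence, by the explicit formula for planar Brownian motion in a cone — the relevant positive harmonic function being $\im(z^{\pi/\beta})$ — if $Z_0=z$ lies in the interior of the quadrant with $|z|$ of order $\epsilon$ and $z/|z|$ a fixed unit vector in the open quadrant, then, for $\tau:=\inf\{s:|Z_s|=1\}$, $\P[\,L_s>0\text{ and }R_s>0\text{ for all }0\le s\le\tau\,]\asymp\epsilon^{\pi/\beta}$, with implicit constants depending only on $\rho$ and on $z/|z|$. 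It therefore suffices to show that this exponent equals $\kappa'/4$; since $\beta=\arccos(-\rho)$ that is equivalent to $\rho=-\cos(4\pi/\kappa')$ (and $4\pi/\kappa'\in(0,\pi)$ since $\kappa'>4$).

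\textbf{The SLE/LQG exponent and the main obstacle.} The event above has a direct meaning in the mating-of-trees dictionary: the maximal intervals $[t_1,t_2]$ on which $\wt\eta'$ fills in a complementary connected component (``bubble'') $\mathcal{B}$ of $\wt\eta'([0,t_1])$ are exactly the excursions of $Z$ that begin and end at a common value $(L_{t_1},R_{t_1})$ and stay in the quadrant above it; here $t_2-t_1=\mu_{\hwedge}(\mathcal{B})$ and the maximal displacement of $(L,R)$ on $[t_1,t_2]$ is comparable to $\nu_{\hwedge}(\partial\mathcal{B})$. Thus the Brownian exponent above equals the exponent governing the probability that such a bubble ever attains quantum boundary length of order $1$, given that at an intermediate stage of its formation the relevant quantum boundary length was of order $\epsilon$. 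Since $\partial\mathcal{B}$ is a concatenation of flow lines of the underlying imaginary-geometry field — $\SLE_{\kappa}$-type curves with $\kappa=16/\kappa'\in(0,4)$ — I would compute this exponent using the mating-of-trees / quantum zipper results of \cite{wedges}, which describe the quantum boundary lengths controlling the growth of $\mathcal{B}$ by an explicit process attached to an $\SLE_{\kappa}$ drawn on a quantum wedge; extracting the tail exponent of that process via an $\SLE_{\kappa}$ martingale/Girsanov computation together with the exact coupling of $\SLE_{\kappa}$ with LQG, I expect to obtain the value $\kappa'/4$, which by the previous paragraph forces $\rho=-\cos(4\pi/\kappa')$, and the quantum cone statement then follows from the local equivalence recorded above. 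The main obstacle is precisely this exponent computation. One must (i) make the correspondence between the Brownian cone excursion and the bubble $\mathcal{B}$ quantitatively precise, with the correct quantum-length normalization and with all $\asymp$-constants controlled uniformly in the (random) local behaviour of $\hwedge$ near the root — which I expect to handle by an absolute-continuity argument — and (ii) establish and analyze the relevant $\SLE_{\kappa}$-on-a-quantum-wedge boundary-length process, possibly first proving a dedicated quantum-zipper type description of how quantum boundary length is transported along an $\SLE_{\kappa}$ on a wedge, from which its tail exponent is then read off.
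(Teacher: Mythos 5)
Your high-level strategy coincides with the paper's: identify the correlation by computing a single tail exponent, evaluate it on the Brownian side via the classical cone-exit exponent $\pi/\arccos(-\rho)$, reduce to the $\tfrac32\gamma$-quantum wedge decorated by chordal $\SLE_{\kappa'}$, and match the exponent with $\kappa'/4$. Your Brownian computation and the reduction to the wedge are essentially Lemma~\ref{lem:cone-exponent} and Lemma~\ref{lem:wedge-cone}/Remark~\ref{rmk:E-equiv}. However, the entire technical content of the proof is the computation of the SLE/LQG exponent, and your proposal does not carry it out: you state that you ``expect to obtain the value $\kappa'/4$'' and explicitly label this the main obstacle. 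That is not a proof of the theorem; it is a reduction of the theorem to its hardest step.

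Moreover, the route you sketch for that step is problematic. You propose to read the exponent off the mating-of-trees correspondence between cone excursions of $Z$ and ``bubbles'' that $\wt\eta'$ fills in, and then to analyze the bubble boundaries as $\SLE_\kappa$ flow lines via a quantum-zipper computation. But for $\kappa'>8$ the space-filling curve is ordinary $\SLE_{\kappa'}$, which never disconnects bubbles: $\BB C\setminus\eta((-\infty,t])$ is a.s.\ simply connected at all times (Lemma~\ref{lem:wpsf-cond}), and correspondingly $Z$ a.s.\ has \emph{no} $\pi/2$-cone times — which is exactly why one must work with the approximate cone event $\wt E^t_\delta$ rather than with exact cone excursions. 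The paper instead computes the exponent by (i) an $\SLE_{\kappa'}(\rho;\rho)$-martingale estimate (with $\rho=\kappa'-4$) for the probability that the chordal curve avoids two boundary points $-z_L,z_R$ before exiting a Euclidean ball (Section~\ref{sec:Euclidean}); (ii) joint moment estimates, via circle averages of the free-boundary GFF, for the Euclidean positions at which the quantum boundary length reaches $\delta$ (Section~\ref{sec:KPZ}); and (iii) a genuinely delicate comparison between the quantum-mass time parameterization in $\wt E^1_\delta$ and the Euclidean stopping time $T_r$, which requires playing off the circle-average, unit-radius, and smooth-centering embeddings of the wedge against each other (Section~\ref{sec:embedding}). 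Your single sentence deferring step (iii) to ``an absolute-continuity argument'' glosses over what is in fact a substantial portion of the argument (Proposition~\ref{prop:wedge-smooth} and its supporting lemmas). So while the skeleton is right, the proposal has a genuine gap at precisely the point where the theorem's difficulty lies.
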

 
We note that in light of Lemma~\ref{lem:wedge-cone} below, either of the two statements of Theorem~\ref{thm:main} implies the other.

\subsection{Preliminaries}\label{subsec:pre}

\subsubsection{Basic notation} 
\label{sec:basic}

Here we record some basic notation which we will use throughout this paper. 

\begin{notation}
\label{def:asymp}
If $a$ and $b$ are two quantities, we write $a\preceq b$ (resp.\ $a \succeq b$) if there is a constant $C$ (independent of the parameters of interest) such that $a \leq C b$ (resp.\ $a \geq C b$). We write $a \asymp b$ if $a\preceq b$ and $a \succeq b$. 
\end{notation}

\begin{notation}
\label{def:o-notation}
If $a$ and $b$ are two quantities which depend on a parameter $x$, we write $a = o_x(b)$ (resp.\ $a = O_x(b)$) if $a/b \rta 0$ (resp.\ $a/b$ remains bounded) as $x \rta 0$ or as $x\rta\infty$, depending on context. We write $a = o_x^\infty(b)$ if $a = o_x(b^s)$ for each $s > 0$ (if $b$ is tending to $0$) or for each $s < 0$ (if $b$ is tending to $\infty$). The regime we are considering will be clear from the context.
\end{notation}

Unless otherwise stated, all implicit constants in $\asymp, \preceq$, $\succeq$, $O_x(\cdot)$, and $o_x(\cdot)$ which are involved in the proof of a result are required to satisfy the same dependencies as described in the statement of said result.  

\subsubsection{Schramm-Loewner evolution}
\label{sec:sle}

Schramm-Loewner evolution (SLE$_{\kappa}$) is a one-parameter family of conformally invariant laws on two-dimensional fractal curves indexed by $\kappa > 0$, originally introduced in~\cite{schramm0} as a candidate for the scaling limit of various discrete statistical physics models. We refer the reader to~\cite{lawler-book,werner-notes} for an introduction to SLE.

\emph{Whole-plane space-filling SLE$_{\kappa'}$ from $\infty$ to $\infty$} for $\kappa' > 4$ is a variant of SLE$_{\kappa'}$ introduced in~\cite[Sections~1.2.3 and~4.3]{ig4} and~\cite[Footnote~9]{wedges}. In the case when $\kappa' \in (4,8)$, ordinary $\op{SLE}_{\kappa'}$ does not fill in open sets, but rather forms ``bubbles" which it surrounds, but never enters~\cite{schramm-sle}. Space-filling $\op{SLE}_{\kappa'}$ in this case is obtained by continuously filling in these bubbles as they are disconnected from $\infty$.  It is the peano curve associated with the exploration tree in the construction of $\CLE_{\kappa'}$ \cite{shef-cle}. In the case when $\kappa' \geq 8$ (so ordinary $\op{SLE}_{\kappa'}$ is space-filling) space-filling $\op{SLE}_{\kappa'}$ from $\infty$ to $\infty$ is a bi-infinite $\op{SLE}_{\kappa'}$ curve which fills in all of $\BB C$, starting and ending at $\infty$. It has the property that if one runs it up until any stopping time $\tau$, its complement is an unbounded simply connected domain and the conditional law of the path is given by that of an ordinary chordal $\SLE_{\kappa'}$ in the remaining domain from the tip at time $\tau$ to $\infty$.  It can also be constructed directly from ordinary $\SLE_{\kappa'}$ using a limiting procedure as follows (this is not equivalent to but easy to see from the GFF-based construction given in \cite{ig4}).  Suppose that $\eta'$ is a chordal $\SLE_{\kappa'}$ in $\BB H$ from $0$ to $\infty$ and that $z_0 \in \BB H$ is fixed.  For each $\epsilon > 0$ let $\eta_\epsilon'$ be given by $\epsilon^{-1}(\eta' - z_0)$ parameterized according to Lebesgue measure and, for each $r > 0$, let $\tau_{\epsilon,r}$ (resp.\ $\sigma_{\epsilon,r}$) be the first time that $\eta_\epsilon'$ hits $\partial B_r(0)$ (resp.\ fills $B_r(0)$).  Then the law of $\eta_\epsilon'|_{[\tau_{\epsilon,r},\sigma_{\epsilon,r}]}$ converges in total variation as $\epsilon \to 0$ to the restriction of whole-plane $\SLE_{\kappa'}$ from $\infty$ to $\infty$ to the interval of times between when it first hits $B_r(0)$ and fills $B_r(0)$, also parameterized according to Lebesgue measure.

We record the aforementioned fact about the conditional law of space-filling $\SLE_{\kappa'}$ for $\kappa' \geq 8$ in the following lemma, which is a consequence of the construction in~\cite[Footnote~9]{wedges}.

\begin{lemma}
\label{lem:wpsf-cond}
Let $\kappa' \geq 8$ and let $\eta$ be a whole-plane space-filling $\op{SLE}_{\kappa'}$ from $\infty$ to $\infty$. Let $\tau$ be a stopping time for $ \eta$. Then $\BB C\setminus \eta ((-\infty,\tau])$ is a.s.\ simply connected, unbounded, and the conditional law of $\eta |_{[\tau,\infty)}$ given $\eta |_{(-\infty,\tau]}$ is that of a chordal $\op{SLE}_{\kappa'}$ from $\eta (\tau)$ to $\infty$ in $\BB C\setminus \eta ((-\infty,\tau])$. 
\end{lemma}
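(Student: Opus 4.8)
The plan is to derive this from the characterization of whole-plane space-filling $\SLE_{\kappa'}$ given in \cite[Footnote~9]{wedges} and the domain Markov property of ordinary chordal $\SLE_{\kappa'}$, together with the limiting construction described above. First I would treat the case of a deterministic time $\tau$ (more precisely, a stopping time defined in terms of hitting or filling a fixed ball), using the total variation convergence statement: for fixed $r>0$, the restriction of $\eta$ to the interval of times between when it first hits $B_r(0)$ and when it fills $B_r(0)$ is the total-variation limit of $\eta_\epsilon'|_{[\tau_{\epsilon,r},\sigma_{\epsilon,r}]}$, where $\eta_\epsilon'$ is a rescaled chordal $\SLE_{\kappa'}$ in $\BB H$. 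Since $\kappa' \geq 8$, ordinary chordal $\SLE_{\kappa'}$ is a simple-to-state space-filling curve whose complement after any stopping time is an unbounded simply connected domain, and whose conditional law given the past is that of a chordal $\SLE_{\kappa'}$ from the tip to $\infty$ in the complementary domain, by the domain Markov property and conformal invariance of $\SLE$. Passing this through the total variation limit gives the claim for $\eta$ restricted to the relevant time interval.

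Next I would bootstrap from the fixed-$r$ statement to a general stopping time $\tau$. The key point is that for $\kappa' \geq 8$ the curve $\eta$ is space-filling, so $\eta((-\infty,\tau])$ is a closed set with nonempty interior (for $\tau$ in the range of parameterization of interest), and its complement in $\BB C$ is unbounded because $\eta$ ends at $\infty$; simple connectedness of the complement follows since at each time the curve traced so far, together with a neighborhood of $\infty$, is connected and the curve does not disconnect the plane before reaching $\infty$. To handle an arbitrary stopping time rather than a hitting time of a fixed ball, I would exhaust: cover the range of $\tau$ by events of the form $\{\eta(\tau) \in B_r(0),\ \tau \text{ lies in the hit-fill window for } B_r(0)\}$ for rational $r$, apply the fixed-$r$ result on each such event, and use that the conditional law statement is local in the sense that it only depends on $\eta|_{(-\infty,\tau]}$ through the domain $\BB C \setminus \eta((-\infty,\tau])$ and the marked point $\eta(\tau)$. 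By a standard approximation of a general stopping time by stopping times taking countably many values, and continuity of the chordal $\SLE_{\kappa'}$ law in its domain and marked points (via conformal maps), the conclusion extends to all stopping times $\tau$.

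The main obstacle I anticipate is making the passage from the fixed-ball total-variation convergence statement to a genuinely arbitrary stopping time fully rigorous: one must check that the ``hit-fill window'' for balls of rational radius, together with the behavior at $\infty$, really does cover all stopping times one cares about, and that the conditional law assertion — which a priori is only known on the interior of such a window — is compatible across overlapping windows and passes to the limit. This is where one uses that the complement $\BB C \setminus \eta((-\infty,\tau])$ is determined by $\eta|_{(-\infty,\tau]}$ and that chordal $\SLE_{\kappa'}$ is characterized by conformal invariance plus the domain Markov property, so that the conditional law can only be the stated one. Everything else — simple connectedness, unboundedness, and the identification of the limiting law with chordal $\SLE_{\kappa'}$ — is a routine consequence of the corresponding properties of ordinary $\SLE_{\kappa'}$ for $\kappa' \geq 8$ combined with the construction in \cite{wedges}.
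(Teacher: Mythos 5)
The paper does not actually prove this lemma: it records it as an immediate consequence of the GFF-based construction of whole-plane space-filling $\SLE_{\kappa'}$ in \cite[Footnote~9]{wedges} together with \cite{ig4}, where for $\kappa'\geq 8$ the curve is essentially \emph{built} so that, after any stopping time, the remainder is a chordal $\SLE_{\kappa'}$ in the complementary domain. Your proposal instead tries to derive the lemma from the limiting procedure (total variation convergence of $\eta_\epsilon'|_{[\tau_{\epsilon,r},\sigma_{\epsilon,r}]}$), and this route has a genuine gap. The total variation statement controls only the law of the curve segment between the hitting time of $B_r(0)$ and the time it fills $B_r(0)$. The assertion you need is about the conditional law of the \emph{entire} future $\eta|_{[\tau,\infty)}$ given the \emph{entire} past $\eta|_{(-\infty,\tau]}$: the conditioning $\sigma$-algebra involves arbitrarily old times lying outside the window, the domain $\BB C\setminus\eta((-\infty,\tau])$ is not a functional of the windowed segment (and differs from the chordal hull complement $\BB H\setminus\eta_\epsilon'([0,\tau])$ even under a coupling that matches the segments), and the future extends beyond the filling time of $B_r(0)$. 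So one cannot simply ``pass the domain Markov property through the total variation limit''; the covering argument over rational $r$ addresses which stopping times are reachable but not this non-locality of the conditional-law statement. Note also that the paper explicitly warns that the limiting procedure ``is not equivalent to but easy to see from the GFF-based construction'' --- it is a consequence of the definition, not the definition itself, so taking it as the starting point risks circularity unless you first show the limit characterizes the law.

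The repair is to argue as the paper intends: invoke the construction of \cite[Footnote~9]{wedges} and the results of \cite{ig4} on counterflow lines, from which the conditional law after a stopping time is a chordal $\SLE_{\kappa'}$ in the complementary domain by construction, and the topological claims (the complement is a.s.\ unbounded and simply connected) follow from the corresponding properties of chordal $\SLE_{\kappa'}$ hulls for $\kappa'\geq 8$. Your remarks on unboundedness and simple connectedness are in the right direction but should likewise be anchored to that construction rather than to the limiting procedure.
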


\subsubsection{Quantum surfaces}
\label{sec:surface}

Fix $\gamma \in (0,2)$ (in this paper we will always take $\gamma = 4/\sqrt{\kappa'} \in (0,\sqrt 2)$). Also let $k$ be a non-negative integer. A \emph{$\gamma$-LQG surface} with $k$ marked points~\cite{shef-kpz,shef-zipper,wedges} is an equivalence class of $k+2$-tuples $(D , h , z_1,\dots ,z_k)$, where $D\subset \BB C$ is a domain (possibly all of $\BB C$), $h$ is a distribution on $D$, and $z_1,\dots ,z_k \in \ol{D}$ are marked points. Two such $k+2$-tuples $(D, h, z_1,\dots,z_k)$ and $(\wt D , \wt h,\wt z_1,\dots ,\wt z_k)$ are declared to be equivalent if there is a conformal map $f : \wt D \rta D$ such that 
\eqb \label{eq:lqg-coord}
\wt h = h \circ f + Q\log |f'| \quad \op{and} \quad f(\wt z_j) = z_j ,\: \forall j \in \{1,\dots,k\}  
\eqe
where 
\eqb \label{eq:Q-def}
Q := \frac{2}{\gamma} + \frac{\gamma}{2} .
\eqe 
In~\cite{shef-kpz}, it is shown that in the case when $h$ is some variant of the GFF on $D$ (which is the only case we will consider in this paper), the corresponding quantum surface has a natural area measure $\mu_h$ on $D$ (which is a regularization of ``$e^{\gamma h(z)} \, dz$") and a natural boundary length measure $\nu_h$ on $\bdy D$ (which is a regularization of ``$e^{\frac{\gamma}{2} h(z)} \, dz$"). By~\cite[Proposition~2.1]{shef-kpz} and its boundary analogue, these measures are preserved under transformations of the form~\eqref{eq:lqg-coord}. We note that the measure $\nu_h$ can be extended to certain curves lying in the interior of the domain $D$ (in particular, this is true for SLE$_{\kappa}$ curves with $\kappa = \gamma^2$). See~\cite{shef-zipper,wedges}. 

The main types of quantum surfaces which we will be interested in in this paper are the so-called quantum wedges and quantum cones, which are defined in~\cite[Section~1.6]{shef-zipper} and~\cite[Sections~4.2 and~4.3]{wedges}. For $\alpha \in (0,Q)$, an \emph{$\alpha$-quantum wedge} is a doubly marked quantum surface $\mcl W = (\BB H , \hwedge , 0, \infty)$ defined as follows. Let $\mcl H(\BB H)$ be the Hilbert space used to define a free-boundary GFF on $\BB H$~\cite[Section~3]{shef-zipper} (i.e.\ the completion of the space of smooth functions on $\BB H$ with respect to the inner product $(f,g)_\nabla = (2\pi)^{-1} \int_{\BB H} \nabla f(z) \cdot \nabla g(z) \, dz$).  
Let $\mcl H^0(\BB H)$ (resp.\ $\mcl H^\dagger(\BB H)$) be the space of functions in $\mcl H(\BB H)$ which are constant on each semicircle in $\BB H$ centered at 0 (resp.\ its orthogonal complement).  

Let $\alpha \in (0,Q)$, with $Q$ as in~\eqref{eq:Q-def}. Following~\cite[Definition 4.3]{wedges}, we define an $\alpha$-quantum wedge to be the doubly marked quantum surface $\mcl W = (\BB H , \hwedge , 0, \infty)$, where $\hwedge$ is a random distribution on $\BB H$ defined as follows. The projection $\hwedge^\dagger$ of $\hwedge$ onto $\mcl H^\dagger(\BB H)$ agrees in law with the projection onto $\mcl H^\dagger(\BB H)$ of a free-boundary GFF on $\BB H$. The projection $\hwedge^0$ of $\hwedge$ onto $\mcl H^0(\BB H)$ is independent of $\hwedge^\dagger$ and is defined as follows. For $s \geq 0$, $\hwedge^0(e^{-s}) = \mcl B_{2s} + \alpha s$, where $\mcl B$ is a standard linear Brownian motion; and for $s < 0$, $\hwedge^0(e^{-s}) = \wh{\mcl B}_{-2s} + \alpha s$, where $\wh{\mcl B}$ is independent from $\mcl B$ and has the law of a standard linear Brownian motion conditioned so that $\wh{\mcl B}_{2s } + (Q-\alpha) s  > 0$ for all $s > 0$. Note that a quantum wedge has two marked points, 0 and $\infty$. Every bounded subset of $\BB H$ has finite quantum mass a.s.\ and every neighborhood of $\infty$ (i.e.\ any open set which contains $\BB H \setminus B_r(0)$ for some $r>0$) has infinite mass a.s.
 
A quantum wedge is only defined modulo transformations of the form~\eqref{eq:lqg-coord}, so if we continue to parameterize the wedge by $(\BB H,0,\infty)$, the distribution $\hwedge$ can be replaced with another distribution obtained via~\eqref{eq:lqg-coord} with $f$ given by a scaling by a positive constant. Different choices of $h$ are referred to as different \emph{embeddings} of the same surface. 
 
\begin{defn}
\label{def:wedge-free}
The distribution $\hwedge$ defined just above is called the \emph{circle average embedding} of a quantum wedge. 
\end{defn}

We will consider several other embeddings of a quantum wedge in Section~\ref{subsec:Brownian-lower}.  

\begin{remark}
\label{rmk:wedge-free}
The circle average embedding of a quantum wedge is convenient for the following reason. Suppose that $h^F$ is a free-boundary GFF on $\BB H$ with additive constant chosen so that its circle average over $\bdy B_1(0) \cap \BB H$ is 0 (which is the main normalization used in~\cite{wedges}) and let $h := h^F - \alpha \log |\cdot|$. Then with $\hwedge$ as in Definition~\ref{def:wedge-free}, the restrictions of $\hwedge$ and $h$ to $B_1(0)\cap\BB H$ agree in law.  Indeed, if we let $h^0$ be the projection of $h$ onto $\mcl H^0(\BB H)$ (equivalently the semicircle average process around 0), then $h^0(e^{-s})$ evolves as a two-sided Brownian motion, so $(h^0(e^{-s}))_{s\geq 0} \eqD (\hwedge^0(e^{-s}))_{s\geq 0}$. Moreover, the projections of $\hwedge$ and $h$ onto $\mcl H^\dagger(\BB H)$ agree in law by definition. 
\end{remark}

For $\alpha \in (0,Q)$, an \emph{$\alpha$-quantum cone} is a doubly marked quantum surface $\mcl C = (\BB C , h , 0, \infty)$ which is similar to an $\alpha$-quantum wedge but is parameterized by the whole plane rather than the half plane. We will now describe the definition of this object, which first appeared in~\cite[Definition 4.9]{wedges}. Let $\mcl H(\BB C)$ be the Hilbert space used to define the whole-plane GFF on $\BB C$. Let $\mcl H^0(\BB C)$ (resp.\ $\mcl H^\dagger(\BB C)$) be the space of functions in $\mcl H(\BB C)$ which are constant on each circle centered at 0 (resp.\ its orthogonal complement). An embedding $h$ of a $\gamma$-quantum cone into $\BB C$ can be constructed as follows. The projection $h^\dagger$ of $h$ onto $\mcl H^\dagger(\BB C)$ agrees in law with the corresponding projection of a whole-plane GFF on $\BB C$. The projection $ h^0$ of $ h$ onto $\mcl H^0(\BB C)$ is independent of $ h^\dagger$ and is described as follows. For $s \geq 0$, $  h^0(e^{-s}) = \mcl B_{ s} + \alpha s$, where $\mcl B$ is a standard linear Brownian motion; and for $s < 0$, $  h^0(e^{-s}) = \wh{\mcl B}_{- s} + \alpha s$, where $\wh{\mcl B}$ is a standard linear Brownian motion conditioned so that $\wh{\mcl B}_{ s } + (Q-\alpha) s  > 0$ for all $s > 0$, independent from $\mcl B$. 

\begin{remark}
In~\cite{wedges}, the sets of quantum cones and quantum wedges are sometimes parameterized by a different parameter, called the \emph{weight}, which is equal to $\gamma(\gamma/2+Q-\alpha)$ in the wedge case and $2\gamma(Q-\alpha)$ in the cone case, with $Q$ as in~\eqref{eq:lqg-coord}. The reason for this choice of parameter is that it behaves nicely under the various ``gluing" and ``cutting" operations considered in~\cite{wedges}. In this paper we will not consider the weight parameter and will always identify our wedges and cones by $\alpha$, the size of the logarithmic singularity at 0. 
\end{remark}

The main fact which we will use about quantum cones in this paper is the following lemma, which allows us to reduce the problem of studying a space-filling $\op{SLE}_{\kappa'}$ on a $\gamma$-quantum cone to the problem of studying an ordinary chordal $\op{SLE}_{\kappa'}$ on a $\frac32\gamma$-quantum wedge.

\begin{lemma}
\label{lem:wedge-cone}
Let $\kappa' \geq 8$ and $\gamma=  4/\sqrt{\kappa'} \in (0,\sqrt 2]$. 
Let $\mcl C = (\BB C , h , 0 , \infty)$ be a $\gamma$-quantum cone. Let $\eta $ be a whole-plane space-filling $\op{SLE}_{\kappa'}$ from $\infty$ to $\infty$ independent from $\mcl C$. Let $\wt{\eta} $ be the curve obtained by parameterizing $\eta $ by $\gamma$-quantum mass with respect to $h$ so that $\wt{\eta} (0) = 0$. Let $\mcl W$ be the quantum surface obtained by restricting $h$ to $\BB C\setminus \wt{\eta} ((-\infty , 0])$. Then the pair $(\mcl W , \wt{\eta} |_{[0,\infty) }  )$ has the law of a $\frac{3\gamma}{2}$-quantum wedge together with an independent chordal $\op{SLE}_{\kappa'}$ parameterized by quantum mass with respect to this wedge. 
\end{lemma}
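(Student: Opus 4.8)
The plan is to deduce this from the mating-of-trees/quantum-zipper theory of \cite{wedges}, using Lemma~\ref{lem:wpsf-cond} to handle the curve and a conditioning argument to decouple the curve from the field. The first point to record is that the set $\wt\eta((-\infty,0])$ is measurable with respect to $\eta$ alone: the time change taking $\eta$ to $\wt\eta$ depends on $h$, but $\wt\eta(0)$ is by definition the deterministic point $0$, so $\wt\eta((-\infty,0]) = \eta((-\infty,\tau])$, where $\tau$ is the first time that $\eta$ (in its original parameterization) hits $0$. Since $\eta$ and $\mcl C$ are independent, $\tau$ is a stopping time for $\eta$ that is independent of $\mcl C$. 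Write $D := \mathbb C \setminus \eta((-\infty,\tau])$, so that $\mcl W = (D, h|_D, 0, \infty)$ and $D$ is measurable with respect to $\eta|_{(-\infty,\tau]}$.

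Next I would condition on $\eta|_{(-\infty,\tau]}$. By Lemma~\ref{lem:wpsf-cond}, $D$ is a.s.\ simply connected and unbounded, and the conditional law of $\eta|_{[\tau,\infty)}$ is that of a chordal $\op{SLE}_{\kappa'}$ from $0$ to $\infty$ in $D$. Because $\mcl C$ is independent of all of $\eta$, it remains independent of $\eta|_{[\tau,\infty)}$ after conditioning on $\eta|_{(-\infty,\tau]}$; hence, given $\eta|_{(-\infty,\tau]}$, the field $h|_D$ and the unparameterized curve $\eta|_{[\tau,\infty)}$ are independent, and therefore $\wt\eta|_{[0,\infty)}$ is a chordal $\op{SLE}_{\kappa'}$ from $0$ to $\infty$ in $D$, independent of $h|_D$, reparameterized by $\mu_h$-mass. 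The lemma then reduces to the following claim: \emph{the conditional law of the quantum surface $(D, h|_D, 0, \infty)$ given $\eta|_{(-\infty,\tau]}$ is a.s.\ that of a $\tfrac{3\gamma}{2}$-quantum wedge, and does not depend on $\eta|_{(-\infty,\tau]}$.} Granting this, the conditional independence just noted upgrades (since neither conditional law depends on $\eta|_{(-\infty,\tau]}$) to the unconditional statement that $\mcl W$ is a $\tfrac{3\gamma}{2}$-wedge decorated by an independent chordal $\op{SLE}_{\kappa'}$ parameterized by quantum mass with respect to $\mcl W$.

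The displayed claim is exactly a cone-cutting statement from the conformal-welding theory of \cite{wedges}: cutting a $\gamma$-quantum cone along an independent whole-plane space-filling $\op{SLE}_{\kappa'}$ at the marked point $\wt\eta(0)=0$ produces a $\tfrac{3\gamma}{2}$-quantum wedge on the ``future'' side (and, by the time-reversal symmetry of whole-plane space-filling $\op{SLE}_{\kappa'}$, on the ``past'' side as well). I would invoke the relevant theorem of \cite{wedges} for this. Several features are consistent with the value $\tfrac{3\gamma}{2}$: in the weight parameterization of \cite{wedges} a $\gamma$-quantum cone has weight $2\gamma(Q-\gamma)$ while a $\tfrac{3\gamma}{2}$-quantum wedge has weight $\gamma(\tfrac\gamma2 + Q - \tfrac{3\gamma}{2}) = \gamma(Q-\gamma)$, exactly half, consistent with the cone being recovered by welding the ``past'' and ``future'' surfaces along both of their boundary arcs; and the $\tfrac{3\gamma}{2}$-quantum wedge is a (thick) wedge in the sense used here precisely when $\tfrac{3\gamma}{2} < Q$, i.e.\ when $\gamma < \sqrt 2$, which matches the hypothesis $\gamma \le \sqrt 2$ (the case $\gamma=\sqrt2$ being the boundary case $\tfrac{3\gamma}{2}=Q$).

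The main obstacle is this claim about the law of $(D, h|_D, 0, \infty)$: reading off from the quantum-zipper analysis of \cite{wedges} that the $\gamma$-cone field restricted to the complement of the past of the space-filling SLE becomes a $\tfrac{3\gamma}{2}$-wedge field, and checking that the marked points and the parameterization by quantum mass match the statement here. This is not accessible by a direct computation with the field — indeed the strengths of the logarithmic singularities at $0$ differ between the two surfaces, so there is no naive local absolute continuity — and it is the genuinely substantive input; the remainder of the argument is bookkeeping around Lemma~\ref{lem:wpsf-cond}.
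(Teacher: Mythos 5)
Your reduction is sound and matches the framework of the paper's argument: the observation that $\wt\eta((-\infty,0]) = \eta((-\infty,\tau])$ is determined by $\eta$ alone, the conditioning on the past and the use of Lemma~\ref{lem:wpsf-cond} to identify the conditional law of the future curve, and the isolation of the field statement as the real content are all correct (as is your weight consistency check). But the proof stops exactly where the substance begins. You write that the claim about $(D, h|_D, 0,\infty)$ ``is exactly a cone-cutting statement from the conformal-welding theory of \cite{wedges}'' and that you ``would invoke the relevant theorem,'' but no theorem of \cite{wedges} is stated in the form you need --- there is no result saying directly that cutting a $\gamma$-quantum cone along the past of an independent space-filling $\SLE_{\kappa'}$ yields a $\tfrac{3\gamma}{2}$-quantum wedge. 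That statement has to be assembled, and assembling it is the proof.

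The missing argument runs as follows. Let $\eta_-$ and $\eta_+$ be the left and right boundaries of $\wt\eta((-\infty,0])$. From the construction of space-filling $\SLE_{\kappa'}$ in \cite[Footnote~9]{wedges} together with \cite[Theorems~1.1 and~1.11]{ig4}, the pair $(\eta_-,\eta_+)$ is independent of $\mcl C$, $\eta_-$ is a whole-plane $\SLE_\kappa(2-\kappa)$ from $0$ to $\infty$ (with $\kappa = 16/\kappa'$), and given $\eta_-$ the curve $\eta_+$ is a chordal $\SLE_\kappa(-\kappa/2;-\kappa/2)$ in $\BB C\setminus\eta_-$. One then cuts twice: by \cite[Theorem~1.12]{wedges}, restricting $\mcl C$ to $\BB C\setminus\eta_-$ produces a $(2\gamma-2/\gamma)$-quantum wedge $\mcl W'$, and by \cite[Theorem~1.9]{wedges}, cutting $\mcl W'$ along $\eta_+$ produces a $\tfrac{3\gamma}{2}$-quantum wedge. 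Without identifying the laws of $\eta_\pm$ as these specific $\SLE_\kappa(\rho)$ processes, the cutting theorems of \cite{wedges} cannot be applied, so this identification is an unavoidable input rather than bookkeeping. (The statement is essentially contained in the proof of \cite[Lemma~9.2]{wedges}, but only as a step inside a proof, not as a citable theorem.)
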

\begin{proof}
This is essentially proven as part of the proof of~\cite[Lemma~9.2]{wedges}, but we give the details for completeness.
Let $\eta_- $ and $\eta_+ $ be the left and right boundaries of $\wt{\eta} ((-\infty, 0 ])$. Then $\eta_\pm $ are independent of $\mcl C$; the law of $\eta_- $ is that of a whole-plane $\op{SLE}_\kappa(2-\kappa)$ from 0 to $\infty$; and the conditional law of $\eta_+ $ given $\eta_- $ is that of a chordal $\op{SLE}_\kappa(-\kappa/2;-\kappa/2   )$ from 0 to $\infty$ in $\BB C \setminus \eta_- $. Indeed, this follows from the construction of~\cite[Footnote 9]{wedges} as well as~\cite[Theorems 1.1 and 1.11]{ig4}.  
By~\cite[Theorem~1.12]{wedges}, the law of the quantum surface $\mcl W'$ obtained by restricting $\mcl C$ to $\BB C\setminus \eta_- $ is that of a $(2\gamma - 2/\gamma)$-quantum wedge. 
By~\cite[Theorem~1.9]{wedges}, the surface $\mcl W$ obtained by cutting $\mcl W'$ by $\eta_+ $ has the law of a $\frac{3\gamma}{2}$-quantum wedge. 
 The law of $\wt{\eta} |_{[0,\infty)}$ is obtained from Lemma~\ref{lem:wpsf-cond} and the independence of $\mcl W$ and $\wt{\eta} |_{[0,\infty)}$ (the latter viewed as a curve modulo monotone reparameterization) follows from independence of $\eta $ and $\mcl C$ together with Lemma~\ref{lem:wpsf-cond}.
\end{proof}

\subsection{Approximate cone time event}
\label{sec:cone-time}

In this subsection we reduce the proof of Theorem~\ref{thm:main} to the problem of calculating the tail exponent for the probability of a certain event. 

Assume we are in the setting described in Section~\ref{subsec:main}.
A \emph{$\pi/2$-cone time} of the Brownian motion $Z = (L,R)$ is a time $t\in\BB R$ for which there exists $t' > t$ such that $L_s \geq L_t$ and $R_s \geq R_t$ for each $s\in [t,t']$. That is, $Z$ stays in the ``cone" $\R_+^2 + Z_t$ for some positive amount of time after $t$. 
In the case when $\kappa' \in (4,8)$, the covariance of the peanosphere Brownian motion $Z$ is obtained by computing the Hausdorff dimension of its $\pi/2$-cone times in terms of $\kappa'$ and comparing the formula thus obtained to the known formula for the Hausdorff dimension of the set of $\pi/2$-cone times in terms of the correlation~\cite{evans-cone}. The Hausdorff dimension is calculated in terms of $\kappa'$ by observing that cone times for the Brownian motion correspond to local cut times for $\eta $, see \cite[Lemma~9.4]{wedges}.

In the case when $\kappa' >8$, the curve $\eta $ a.s.\ does not have any local cut times, so $Z$ a.s.\ does not have any $\pi/2$-cone times, hence has non-positive correlation~\cite{shimura-cone}. To compute the correlation in this case, we will compute the tail exponent for the probability that 0 is an ``approximate $\pi/2$-cone time" for $Z$, meaning that the event
\eqb \label{eq:def-E}
\wt E_\delta^t := \left\{ \inf_{s \in [0,t]} L_s \geq -\delta \:  \op{and} \: \inf_{s \in [0,t]} R_s \geq -\delta\right\}
\eqe
occurs for $t$ close to 1 and $\delta$ close to $0$. The tail exponent for probability of $\wt E^t_\delta$ is computed in terms of the correlation of $L_t$ and $R_t$ in~\cite[Equation (4.3)]{shimura-cone}. 

\begin{lemma}\label{lem:cone-exponent}
Let $-\alpha = -\alpha(\gamma)$ be the correlation of $L$ and $R$ and let
\eqb \label{eq:sigma-def}
\sigma(\gamma) := \frac{\pi}{\arccos(\alpha)} .
\eqe
There is a constant $c > 0$, depending only on $\alpha$, such that for $\delta>0$ and $t \geq \delta^{1/2}$ we have
\eqbn
\P\!\left[\wt E^t_\delta \right] = (c + o_\delta(1)) t^{- \sigma(\gamma) /2}   \delta^{ \sigma(\gamma) },
\eqen
where here the $o_\delta(1)$ is uniformly bounded for $\delta>0$ and $t \geq \delta^{1/2}$ and tends to 0 as $\delta\rta 0$ for each fixed $t$. 
\end{lemma}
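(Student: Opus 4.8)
The plan is to reduce the statement, via Brownian scaling and a linear change of coordinates, to the classical sharp asymptotics for the probability that a standard planar Brownian motion started at a fixed interior point of a wedge stays inside that wedge up to a large time. Since $(Z_s)_{s\geq 0}$ is a Brownian motion with $Z_0 = 0$ and correlation $-\alpha$, the rescaled process $s\mapsto \delta^{-1}Z_{\delta^2 s}$ has the same law, and under it the event $\wt E_\delta^t$ becomes $\wt E_1^{T}$ with $T := t\delta^{-2}$; hence $\P[\wt E_\delta^t] = \P[\wt E_1^{T}] =: q(T)$. It therefore suffices to find $c = c(\alpha) > 0$ with $q(T) = (c + o_T(1))T^{-\sigma(\gamma)/2}$ as $T\to\infty$: plugging in $T = t\delta^{-2}$ gives the claimed identity, with error term $r(T) := q(T)T^{\sigma(\gamma)/2} - c$. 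Both assertions about $r$ are then formal: $q$ is continuous and nonincreasing, so $r$ is continuous on $(0,\infty)$ with finite limits at both endpoints ($r(T)\to -c$ as $T\to 0^+$, $r(T)\to 0$ as $T\to\infty$), hence bounded; and as $(\delta,t)$ ranges over $\{\delta>0,\ t\geq\delta^{1/2}\}$ the quantity $T = t\delta^{-2}$ ranges over all of $(0,\infty)$, while for fixed $t$ we have $T\to\infty$ as $\delta\to 0$, so $r(T)\to 0$.

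Next I would diagonalize the covariance. Let $\Sigma$ be the time-$1$ covariance matrix of $Z$, which is a positive multiple of $\left(\begin{smallmatrix}1 & -\alpha \\ -\alpha & 1\end{smallmatrix}\right)$, and set $A := \Sigma^{-1/2}$, so that $\wh Z := AZ$ is a standard planar Brownian motion. The set $\{(x,y):x\geq -1,\ y\geq -1\}$ is a translate of $\R_+^2$, with boundary rays in the coordinate directions $e_1, e_2$; its image $W := A\{x\geq -1,\ y\geq -1\}$ is therefore a wedge whose opening angle equals the angle between $Ae_1$ and $Ae_2$, namely
\[
\arccos\!\left(\frac{e_1^\top\Sigma^{-1}e_2}{\sqrt{(e_1^\top\Sigma^{-1}e_1)(e_2^\top\Sigma^{-1}e_2)}}\right) = \arccos(\alpha) =: \theta ,
\]
using $\Sigma^{-1}\propto\left(\begin{smallmatrix}1 & \alpha \\ \alpha & 1\end{smallmatrix}\right)$. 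Moreover $\wh Z_0 = A\cdot(0,0)$ lies in the interior of $W$, since $(0,0)$ is interior to $\{x\geq -1,\ y\geq -1\}$. Thus $q(T) = \P_{\wh Z_0}[\wh Z|_{[0,T]}\subset W]$ is the probability that a standard planar Brownian motion, started at a fixed interior point of a wedge of opening angle $\theta = \arccos(\alpha)$, does not exit the wedge before time $T$.

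Finally I would invoke the classical wedge-exit asymptotics. After translating the apex of $W$ to the origin, let $h(re^{i\phi}) := r^{\pi/\theta}\sin(\pi\phi/\theta)$ be the positive harmonic function on the wedge vanishing on its two boundary rays. Then for each fixed interior point $w$ one has $\P_{w}[\wh Z|_{[0,T]}\subset W] = (c_\theta\,h(w) + o_T(1))\,T^{-\pi/(2\theta)}$ as $T\to\infty$, with $c_\theta > 0$ depending only on $\theta$; this is precisely \cite[Equation (4.3)]{shimura-cone}. (It can also be obtained from the skew-product decomposition of planar Brownian motion: with respect to a suitable clock, the angular coordinate is a linear Brownian motion which must avoid exiting $(0,\theta)$, and the polynomial decay in $T$ is contributed by the radial, Bessel-type, coordinate via a Green's-function/eigenfunction computation.) Applying this with $w = \wh Z_0$, noting $h(\wh Z_0) > 0$ because $\wh Z_0$ is interior, and using $\pi/(2\theta) = \pi/(2\arccos\alpha) = \sigma(\gamma)/2$, we get $q(T) = (c + o_T(1))T^{-\sigma(\gamma)/2}$ with $c := c_\theta\,h(\wh Z_0) > 0$ depending only on $\alpha$; combined with the scaling reduction this gives the lemma.

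I expect the main obstacle to be the sharp statement in the third step: obtaining the exact power $\pi/(2\theta)$, rather than merely $T^{-\pi/(2\theta)+o(1)}$, together with a strictly positive limiting constant. This is the only place where the specific structure of planar Brownian motion in a wedge is genuinely used; the identification of the opening angle with $\arccos(\alpha)$ is a short linear-algebra computation, and the uniformity claims about the error term follow formally from the pointwise $T\to\infty$ asymptotic together with Brownian scaling, so no quantitative refinement of the wedge estimate is needed.
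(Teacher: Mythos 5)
Your proposal is correct and follows essentially the same route as the paper: apply a linear map that standardizes $Z$, observe that it sends the quarter-plane to a wedge of opening angle $\arccos(\alpha)$, and invoke \cite[Equation (4.3)]{shimura-cone}. The additional details you supply (the Brownian scaling reduction to $T=t\delta^{-2}$ and the formal derivation of the uniformity of the error term) are left implicit in the paper but are consistent with its argument.
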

\begin{proof}
Let $A$ be a linear transformation chosen in such a way that $\wt Z := A Z$ is a standard two-dimensional Brownian motion (variances equal to 1, covariance equal to 0). Then an approximate $\pi/2$-cone time for $Z$ is the same as an approximate $\arccos(\alpha)$-cone time for $\wt Z$. Hence the statement of the lemma follows from~\cite[Equation (4.3)]{shimura-cone}.
\end{proof}

In light of Lemma~\ref{lem:cone-exponent}, to prove Theorem~\ref{thm:main} it suffices to show that
\eqb \label{eq:exponent-compare}
\sigma(\gamma) =  \frac{4}{\gamma^2} =  \frac{\kappa'}{4}  .
\eqe

\begin{remark} \label{rmk:E-equiv}
The event $\wt E^t_\delta$ of~\eqref{eq:def-E} can equivalently be defined as follows. Let $f : \BB C\setminus \wt{\eta} ((-\infty, 0]) \rta \BB H$ be a conformal map which takes 0 to 0 and $\infty$ to $\infty$. Let $\hwedge := h \circ f^{-1} + Q\log |(f^{-1})'|$ and let $\wt{\eta}' := f(\wt{\eta} |_{[0,\infty)})$. By Lemma~\ref{lem:wedge-cone}, the quantum surface $\mcl W = (\BB H , \hwedge , 0 ,\infty)$ has the law of a $\frac32\gamma$-quantum wedge and $\wt{\eta}'$ is a chordal $\op{SLE}_{\kappa'}$ from $0$ to $\infty$ in $\BB H$ which is independent from $\mcl W$ and parameterized by $\gamma$-quantum mass with respect to $\hwedge$. For $\delta>0$, let $\xwedge_{\delta,L}$ and $\xwedge_{\delta,R}$ be the unique points respectively in $\BB R_-$ and $\BB R_+$  so that $\nu_{\hwedge}([-\xwedge_{\delta,L} , 0]) = \nu_{\hwedge}([0,\xwedge_{\delta,R}]) = \delta$. Then $\wt E^t_\delta$ is the same as the event that $\wt{\eta}'$ does not hit either $(-\infty,-\xwedge_{\delta,L}]$ or $[\xwedge_{\delta,R},\infty)$ before time~$t$. Since $\wt{\eta}'$ is boundary filling, $\wt E^t_\delta$ is also the same as the event that $\wt{\eta}'$ does not hit either $-\xwedge_{\delta,L}$ or $\xwedge_{\delta,R}$ before time~$t$.
\end{remark}

\subsection{Outline}
\label{sec:outline}

In the remainder of this paper, we will prove~\eqref{eq:exponent-compare}, hence Theorem~\ref{thm:main}.  For the proof, we will use the alternative description of the event $\wt E^t_\delta$ given in Remark~\ref{rmk:E-equiv}. In Section~\ref{sec:Euclidean}, we will use the SLE martingales of~\cite{sw-coord} to prove an estimate for the probability that a chordal $\op{SLE}_{\kappa'}$ from 0 to $\infty$ in $\BB H$ exits the Euclidean ball of fixed radius $r > 0$ before hitting $-z_L$ or $z_R$, where $z_L , z_R \in (0,\infty)$. In Section~\ref{sec:KPZ}, we will prove some moment estimates for the quantum boundary measure induced by a GFF which together with the estimates of Section~\ref{sec:Euclidean} will enable us to prove a variant of~\eqref{eq:exponent-compare} with $\wt E^t_\delta$ replaced by the event that the following is true. With $\xwedge_{\delta,L}$ and $\xwedge_{\delta,R}$ as in Remark~\ref{rmk:E-equiv}, the curve $\wt{\eta}'$ exits the Euclidean ball of radius $r$ before hitting either $-\xwedge_{\delta,L}$ or $\xwedge_{\delta,R}$. The arguments of this section are similar to those used to estimate the quantum measure in~\cite[Section~4]{shef-kpz}. In Section~\ref{sec:embedding}, we will extract~\eqref{eq:exponent-compare} from the estimate of Section~\ref{sec:KPZ}. In particular, we will prove that the probability of $\wt E^t_\delta$ with $t = \delta^{o_\delta(1)}$ is not too much different than the probability of the event of Section~\ref{sec:KPZ} for an appropriate choice of $r$. We will use some techniques which are similar to those found in~\cite[Section~10.4]{wedges}

\section{Euclidean exponent for the SLE event}
\label{sec:Euclidean}

Recall that $\eta'$ is an $\SLE_{\kappa'}$ from 0 to $\infty$ in $\bbH$, and let $(W_t)_{t\geq 0}$ and $(g_t)_{t\geq 0}$ denote its Loewner driving function and Loewner maps, respectively. Assume throughout this section that $\eta'$ is parameterized by half-plane capacity, and let
\eqb \label{eq:sle-filtration}
\mcl F_t := \sigma(\eta'(s) : s \in [0,t]).
\eqe
The purpose of this section is to prove the following proposition, i.e., we calculate the exponent for a Euclidean analogue of the event $\wt E^t_\delta$ of~\eqref{eq:def-E}.

\begin{prop}
\label{prop:Euclidean}
For any $T>0$ and $z_L,z_R\in(0,1)$ define the event $E^T_{z_L,z_R}$ by
\eqbn
E^T_{z_L,z_R} := \{ -z_L,z_R\not\in \eta'([0,T])\}.
\eqen
Then the following estimate holds for $\rho:=\kappa'-4$:
\eqb
\begin{split}
\P\!\left[E^T_{z_L,z_R}\right] &= \P\!\left[E^1_{z_L/\sqrt T,z_R/\sqrt T}\right],\,\,\,\\ \P[E^1_{z_L,z_R}]&=(z_L+z_R)^{\rho^2/(2\kappa')}
z_L^{\rho/\kappa'+o_{z_L}(1)}
z_R^{\rho/\kappa'+o_{z_R}(1)},
\end{split}
\label{eq:eucl1}
\eqe
where the rates of convergence of $o_{z_L}(1)$ and $o_{z_R}(1)$ depend only on $\kappa'$. For any $r>0$ define the stopping time $T_r:=\inf\{t>0\,:\,|\eta'(t)|\geq r\}$. Then 
\eqb
\begin{split}
\P\!\left[E^{T_r}_{z_L,z_R}\right] &= \P\!\left[E^{T_1}_{z_L/r,z_R/r}\right],\,\,\,\\
\P[E^{T_1}_{z_L,z_R}] &= (z_L+z_R)^{\rho^2/(2\kappa')}
z_L^{\rho/\kappa'+o_{z_L}(1)}
z_R^{\rho/\kappa'+o_{z_R}(1)}.
\end{split}
\label{eq:eucl2}
\eqe
\label{prop:Euclidean0}
\end{prop}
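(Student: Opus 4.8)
The plan is to use the theory of SLE$_{\kappa'}$ martingales built from the Loewner flow, as developed in~\cite{sw-coord}, applied to the two marked boundary points $-z_L$ and $z_R$. The scaling relations $\P[E^T_{z_L,z_R}] = \P[E^1_{z_L/\sqrt T, z_R/\sqrt T}]$ and $\P[E^{T_r}_{z_L,z_R}] = \P[E^{T_1}_{z_L/r,z_R/r}]$ are immediate from the scale invariance of chordal SLE$_{\kappa'}$ in $\BB H$ (Brownian scaling of the driving function sends the configuration $(0; -z_L, z_R; \infty)$ to $(0; -\lambda z_L, \lambda z_R; \infty)$ while preserving the law modulo time change, and $T \mapsto \lambda^2 T$, $T_r \mapsto T_{\lambda r}$). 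So the content is the two-sided estimate for $\P[E^1_{z_L,z_R}]$ and its $T_1$-analogue.

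For the main estimate, first I would track the images $X^L_t := g_t(-z_L) - W_t$ and $X^R_t := g_t(z_R) - W_t$ of the two marked points under the centered Loewner flow; these satisfy the ODEs $dX^L_t = \tfrac{2\,dt}{X^L_t} - dW_t$, $dX^R_t = \tfrac{2\,dt}{X^R_t} - dW_t$, with $W_t = \sqrt{\kappa'}B_t$, and the event $E^1_{z_L,z_R}$ (resp.\ its $T_1$-version) is, up to the conformal distortion coming from the boundary filling property, the event that neither $X^L$ nor $X^R$ is swallowed before capacity time $1$ (resp.\ before the curve leaves $B_1(0)$). The natural object is a local martingale of the form $M_t = (X^R_t - X^L_t)^a \,|X^L_t|^{b}\,(X^R_t)^{b}\, \prod (g_t'(\pm z_\bullet))^{\cdots}$; one chooses the exponents $a$, $b$ so that the drift from It\^o's formula cancels. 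A direct computation of the generator shows the cancellation forces $b = \rho/\kappa'$ (with $\rho = \kappa'-4$, this is the standard one-point boundary exponent $\tfrac{(8-\kappa')}{2\kappa'}\cdot\tfrac{\kappa'}{\cdots}$, i.e.\ the exponent appearing in the probability that chordal SLE avoids a boundary point) and the cross term exponent is fixed by the two-point fusion, giving the combined power $(z_L+z_R)^{\rho^2/(2\kappa')}$ after accounting for the Loewner derivative factors $g_t'(\pm z_\bullet)$ via the relation $\partial_t \log g_t'(x) = -2/(g_t(x)-W_t)^2$. Running the martingale from time $0$ — where $X^L_0 = -z_L$, $X^R_0 = z_R$, $g_0' = 1$ — up to the relevant stopping time, and using that on the avoidance event the endpoint configuration is macroscopic (so the remaining factors are $O(1)$ up and down) while on the complement $M$ vanishes, the optional stopping identity $\E[M_{\tau}] = M_0 = z_L^{\cdots} z_R^{\cdots}(z_L+z_R)^{\cdots}$ pins down the probability up to the $z_L^{o_{z_L}(1)}$, $z_R^{o_{z_R}(1)}$ corrections. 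The corrections come precisely from the fluctuation of the $O(1)$ factors (the conformal radius of the unswallowed domain seen from a fixed interior scale, and the harmonic-measure distortion inherent in replacing ``$\eta'$ avoids the point'' by ``$X_t$ is not swallowed''); one controls them by a standard up-to-constants argument comparing to the event at a dyadically larger radius, exactly as in the exponent arguments of~\cite{wedges}.

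The step I expect to be the main obstacle is the careful bookkeeping that turns the exact martingale identity into the two-sided bound with the stated error exponents, and in particular handling the three relevant ranges of $(z_L, z_R)$ uniformly: (i) $z_L \asymp z_R$ both small, where the fusion term $(z_L+z_R)^{\rho^2/(2\kappa')}$ is genuinely active; (ii) $z_L \ll z_R$ (and symmetrically), where one essentially decouples into a one-point estimate at the smaller point times an $O(1)$ contribution from the larger; and (iii) the passage between the capacity-time event $E^{T_1}$ and the Euclidean-exit-time event $E^{T_r}$, which requires comparing the half-plane capacity of $\eta'([0,T_r])$ with $r^2$ — these are comparable up to constants but not equal, and one must check the discrepancy only affects the $o(1)$ in the exponent, not the leading powers. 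I would organize this by first proving~\eqref{eq:eucl1} for the capacity parameterization via the martingale computation and optional stopping, then deducing~\eqref{eq:eucl2} by the scaling relation plus the elementary comparison of $T_1$ (capacity) with $T_r$ (Euclidean radius), absorbing all multiplicative $O(1)$ fluctuations into the $o_{z_L}(1)$ and $o_{z_R}(1)$ terms. The reference~\cite{shimura-cone} is not needed here — it enters only in Lemma~\ref{lem:cone-exponent} — so the whole proposition is self-contained modulo the SLE martingale computations of~\cite{sw-coord}.
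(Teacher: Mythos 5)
Your overall route is the paper's route: the two-point Schramm--Wilson local martingale with force points at $-z_L$ and $z_R$, exponents $\rho/\kappa'$ and $\rho^2/(2\kappa')$ with $\rho=\kappa'-4$, plus optional stopping, with the scaling relations dispatched by scale invariance. One structural remark: for the specific choice $\rho = \kappa'-4$ the Loewner-derivative factors in the \cite{sw-coord} martingale vanish identically, so $M_t$ depends only on the current positions $W_t, z^L_t, z^R_t$; your formula leaves those derivative exponents as ``$\cdots$'', but their absence is essential, since $g_\tau'(\pm z_\bullet)$ would \emph{not} be controllable on the avoidance event and the argument would not close with them present.

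The genuine gap is the claim that ``on the avoidance event the endpoint configuration is macroscopic (so the remaining factors are $O(1)$ up and down).'' This is false in both directions, and repairing it is essentially the entire content of the proof. For the lower bound on $M$ at the stopping time (needed for the \emph{upper} bound on $\P[E^1_{z_L,z_R}]$): the curve can avoid both points while remaining within distance $\eps$ of $\R$ for all of $[0,1]$, in which case $|W_1-z^L_1|$ and $|W_1-z^R_1|$ can be arbitrarily small. One must show this happens with probability $o^\infty_\eps(\eps)$ (an iterated Markov-property argument), and that once the curve reaches height $\eps$, each of its two boundary arcs has harmonic measure $\succeq \eps$ from $\infty$, which pushes the images of the unswallowed points to distance $\succeq \eps$ from $W$; only then is $M \geq (z_Lz_R)^u$ at a stopping time before capacity time $1$, up to a super-polynomially small exceptional event. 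For the \emph{lower} bound on $\P[E^1_{z_L,z_R}]$: the identity $\E[M_1]=M_0$ (which itself requires upgrading $M$ from a local martingale to a true martingale, e.g.\ by dominating $|z^q_t-W_t|$ by Bessel processes) does not bound $\P[M_1>0]$ from below unless one shows $\E[M_1; M_1>(z_Lz_R)^{-u}]$ is negligible compared to $M_0$; since $|z^R_1-z^L_1|$ has Gaussian-type tails, $M_1$ is unbounded on the avoidance event and this truncation must be justified by a large-deviation estimate. The same issue recurs in your step (iii): the half-plane capacity of $\eta'([0,T_1])$ is deterministically bounded \emph{above} by a constant, but not below, so $T_1$ can be arbitrarily small; one again needs the harmonic-measure lower bound at time $T_1$ together with Bessel large deviations to show $\P[E^{T_1}_{z_L,z_R};\, T_1<(z_Lz_R)^u]=o^\infty_{z_Lz_R}(z_Lz_R)$. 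Your ``dyadic comparison'' suggestion does not substitute for these estimates.
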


Both for the upper and the lower bound in Proposition~\ref{prop:Euclidean0} we will use the following result from \cite[Theorem~6, Remark~7]{sw-coord}. 
\begin{lem}
Define $\rho:=\kappa'-4$, and let $\wh T_L$ (resp.\ $\wh T_R$) denote the first time that $\eta'$ hits $-z_L$ (resp.\ $z_R$). For each $t\in [0,\wh T_L]$ (resp.\ $t\in[0,\wh T_R]$) define $z_t^L:=g_t(-z_L)$ (resp.\ $z_t^R:=g_t(z_R)$) and define the stochastic process $(M_t)_{t\geq 0}$ by
\eqbn
M_t= 
\begin{cases}
|W_t-z^R_t|^{\rho/\kappa'} |W_t-z^L_t|^{\rho/\kappa'} |z^R_t-z^L_t|^{\rho^2/(2\kappa')} & \text{if } t\in[0,\wh T_L\wedge \wh T_R],\\
0 & \text{if } t\geq \wh T_L\wedge \wh T_R.
\end{cases}
\eqen
Then $M_t$ is a local martingale.
\label{thm:martingale}
\end{lem}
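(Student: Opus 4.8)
\emph{Proof plan.} This is the standard Loewner--It\^o coordinate-change computation, and the statement is precisely \cite[Theorem~6, Remark~7]{sw-coord}; the plan is to carry out (or recall) that It\^o calculation. First I would restrict to the random interval $[0,\wh T_L\wedge\wh T_R)$, on which $-z_L$ and $z_R$ have not yet been swallowed, so that $z^L_t < W_t < z^R_t$ and all three factors of $M_t$ are strictly positive; after $\wh T_L\wedge\wh T_R$ the process $M$ is defined to be $0$, and the two regimes will be patched at the end. On the open interval I would introduce the coordinates $X_t := W_t - z^L_t > 0$ and $Y_t := W_t - z^R_t < 0$, together with $D_t := X_t - Y_t = z^R_t - z^L_t > 0$, and the exponents $a := \rho/\kappa'$, $b := \rho^2/(2\kappa')$, so that $M_t = X_t^a(-Y_t)^a D_t^{\,b}$. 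Under the half-plane capacity parameterization $W_t = \sqrt{\kappa'}\,B_t$ for a standard Brownian motion $B$, and the Loewner flow gives $dz^L_t = -2/X_t\,dt$ and $dz^R_t = -2/Y_t\,dt$; hence $dX_t = dW_t + 2/X_t\,dt$, $dY_t = dW_t + 2/Y_t\,dt$, with $d\langle X\rangle_t = d\langle Y\rangle_t = d\langle X,Y\rangle_t = \kappa'\,dt$. A useful preliminary observation is that the $dW_t$ terms cancel in $dD_t$, so $D$ has finite variation with $dD_t = -2D_t/(X_tY_t)\,dt$ and $d\langle D\rangle_t = 0$.

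Next I would apply It\^o's formula to $\log M_t = a\log X_t + a\log(-Y_t) + b\log D_t$. The martingale part comes out as $a(1/X_t + 1/Y_t)\,dW_t$, and collecting the finite-variation part of $d\log M_t + \tfrac12\,d\langle\log M\rangle_t$ yields an expression of the shape $\bigl[a(2-\kappa'/2) + a^2\kappa'/2\bigr](X_t^{-2}+Y_t^{-2})\,dt + \bigl[a^2\kappa' - 2b\bigr](X_tY_t)^{-1}\,dt$. The crux is that, with $\rho = \kappa'-4$ and the stated exponents, both bracketed coefficients vanish identically: $a(2-\kappa'/2) = -(\kappa'-4)^2/(2\kappa') = -a^2\kappa'/2$, and $a^2\kappa' = (\kappa'-4)^2/\kappa' = 2b$. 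Therefore $dM_t = M_t\bigl(d\log M_t + \tfrac12\,d\langle\log M\rangle_t\bigr) = aM_t(1/X_t + 1/Y_t)\,dW_t$ is driftless, i.e.\ $M$ is a local martingale on $[0,\wh T_L\wedge\wh T_R)$.

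To make this rigorous and to treat the endpoints I would localize by the stopping times $\tau_\eps$ at which $\min(X_t,-Y_t)$ first reaches $\eps$: on $[0,\tau_\eps]$ all coefficients appearing in the It\^o expansion are bounded, so the stopped process $M^{\tau_\eps}$ is a genuine (even $L^2$) martingale, and letting $\eps\downarrow 0$ identifies $M$ as a local martingale up to $\wh T_L\wedge\wh T_R$. Finally I would check that $M$ extends continuously by $0$ across the swallowing time: as $t\uparrow\wh T_L\wedge\wh T_R$ at least one of $X_t$, $-Y_t$ tends to $0$, while the marked point that is not swallowed stays at positive distance from $W_t$ (so the corresponding factor, and $D_t$, remain bounded), and if both points are swallowed simultaneously then $X_t$, $-Y_t$, and $D_t$ all tend to $0$; since $a,b>0$ because $\kappa'>4$, in every case $M_t\to 0$. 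Patching the two regimes gives a continuous process on $[0,\infty)$ which is a local martingale. I expect the algebraic cancellation of the two coefficients to be the real content; the step needing genuine care is the localization together with the continuity of $M$ at $\wh T_L\wedge\wh T_R$ (where $\log M$ degenerates), rather than the It\^o computation itself.
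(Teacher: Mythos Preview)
Your proof is correct. The paper does not actually supply its own argument here: it simply quotes \cite[Theorem~6, Remark~7]{sw-coord} and remarks that for $\rho=\kappa'-4$ the derivative factor in the Schramm--Wilson martingale drops out. Your It\^o computation is exactly the one underlying that citation, so the two are the same in substance; you have just written out what the paper leaves as a reference.
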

It is also proved in \cite[Theorem~6]{sw-coord} that the law of $\eta'$ weighted by $M_t$ (run up to an appropriate stopping time) has the law of a chordal SLE$_{\kappa'}(\rho ; \rho)$ with force points at $-z_L$ and $z_R$, but we will not need this result. Note that the derivative term in \cite{sw-coord} vanishes for $\rho=\kappa'-4$.

For our proof of the lower bound in Proposition~\ref{prop:Euclidean0} we will need that $(M_t)_{t\geq 0}$ is a true martingale, not only a local martingale.
\begin{lem}
	The local martingale $(M_t)_{t\geq 0}$ defined in Lemma~\ref{thm:martingale} is martingale.
	\label{lem:eucl3}
\end{lem}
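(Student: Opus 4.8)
The plan is to upgrade the local martingale $(M_t)_{t\geq 0}$ of Lemma~\ref{thm:martingale} to a true martingale by establishing a uniform integrability bound on the relevant time interval. The standard route is to show that for each fixed $T>0$, the family $\{M_{t\wedge \wh T_L\wedge \wh T_R} : t\in[0,T]\}$ is bounded in $L^p$ for some $p>1$ (or more simply, uniformly bounded), which together with the local martingale property forces the martingale property. So first I would fix attention on $M_{t\wedge \tau}$ where $\tau := \wh T_L\wedge \wh T_R$, and try to bound the three factors $|W_t-z_t^R|$, $|W_t-z_t^L|$, and $|z_t^R-z_t^L|$ appearing in the definition of $M_t$.

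The key observation is that all three of these quantities admit deterministic a priori bounds, at least up to a fixed capacity time $T$. Indeed, $W_t$ is a continuous process (Brownian motion run at speed $\sqrt{\kappa'}$), and the Loewner flow $g_t$ applied to a boundary point moves it at bounded speed depending on how close $\eta'$ comes to that point; but crucially the images $z_t^L = g_t(-z_L) < W_t < z_t^R = g_t(z_R)$ remain ordered, and the distances $|W_t - z_t^{L/R}|$ are controlled from above by the half-plane capacity growth: $|z_t^R - z_t^L| \le |z_R - (-z_L)| + O(\sqrt t)$ by the standard estimate on real-line motion under the Loewner flow, since $\partial_t g_t(x) = 2/(g_t(x)-W_t)$ and the diameter of the image of a bounded interval grows at most like the capacity. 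Hence on $[0,T]$ all three factors are bounded above by a deterministic constant depending only on $z_L,z_R,\kappa',T$, since $\rho/\kappa' > 0$ and $\rho^2/(2\kappa')>0$ for $\kappa'>4$ mean $M_t$ is an increasing function of each factor. Therefore $M_{t\wedge\tau}$ is bounded by a deterministic constant on $[0,T]$, and a bounded local martingale is a (true, uniformly integrable) martingale on $[0,T]$. Since $T$ was arbitrary and the process is stopped at $\tau$ anyway (it is $0$ afterward), this gives the claim.

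The main obstacle I anticipate is making the upper bound on $|W_t - z_t^{L/R}|$ genuinely rigorous and uniform — one must rule out the degenerate possibility that these quantities blow up before the hitting time $\tau$. This is where the ordering $z_t^L < W_t < z_t^R$ for $t < \tau$ is essential: it confines $W_t$ to a bounded interval whose endpoints themselves move at controlled (in fact monotone, outward) speed under the Loewner ODE on the real line, so each factor is dominated by $|z_R+z_L| + C\sqrt T$ for an absolute constant $C$. (One could alternatively invoke a change-of-coordinates to $\mathrm{SLE}_{\kappa'}(\rho;\rho)$ as in \cite{sw-coord} and argue the Radon--Nikodym derivative is controlled, but the direct a priori bound is cleaner and avoids circularity.) Once the deterministic bound is in hand, the passage from local martingale to martingale is immediate from the optional stopping / dominated convergence argument, so I would keep that part brief.
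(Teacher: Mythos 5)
Your overall strategy (dominate the local martingale so as to upgrade it to a true martingale, then conclude by optional stopping/dominated convergence) is the right one and matches the paper's, but the key estimate you rely on is false, so there is a genuine gap. You claim that the three factors $|W_t-z_t^R|$, $|W_t-z_t^L|$, $|z_t^R-z_t^L|$ admit \emph{deterministic} upper bounds of the form $z_L+z_R+C\sqrt{T}$ on $[0,T]$. They do not. As you can check from It\^o's formula and the Loewner equation, $z_t^R-W_t$ and $W_t-z_t^L$ are constant multiples of Bessel processes of dimension $1+4/\kappa'$; the running supremum of such a process on $[0,T]$ is a.s.\ finite but is an unbounded random variable (Gaussian-type tails, not compact support). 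Equivalently, since $W_t=\sqrt{\kappa'}\mcl B_t$ is itself an unbounded Brownian motion and $z_t^L<W_t<z_t^R$ before the swallowing time, $z_t^R-z_t^L$ exceeds any fixed constant with positive probability. The ``standard estimate'' you invoke actually bounds $|g_t(z)-z|$ by a constant times the \emph{radius of the hull} $K_t$, and for SLE that radius is comparable to $\sqrt{t}$ plus the oscillation of the driving function on $[0,t]$ --- random and unbounded --- not to $\sqrt{t}$ alone. So $M_{t\wedge\tau}$ is not bounded, and the ``bounded local martingale'' conclusion does not apply.

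The fix is to replace the deterministic bound by a moment bound: it suffices to show $\E[\sup_{s\in[0,t]}M_s]<\infty$, since then $M_{\sigma_k\wedge t}\to M_t$ in $L^1$ along any localizing sequence by dominated convergence. This is what the paper does: stochastically dominate the two Bessel processes of dimension $1+4/\kappa'<2$ by (multiples of) two-dimensional Bessel processes, i.e.\ moduli of planar Brownian motions, apply Doob's maximal inequality to get $\P[\sup_{s\le t}|z_s^q-W_s|>x]=o_x^\infty(x)$ for $q\in\{L,R\}$, and then use that all the exponents in the definition of $M$ are positive and sum to $\kappa'/2-2$, together with $|z_t^R-z_t^L|\le 2\max(|z_t^L-W_t|,|z_t^R-W_t|)$, to conclude that $\E[\sup_{s\le t}M_s]$ is controlled by $\E[\sup_{s\le t}(|z_s^R-W_s|\vee 1)^{\kappa'/2-2}]+\E[\sup_{s\le t}(|z_s^L-W_s|\vee 1)^{\kappa'/2-2}]<\infty$. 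Your Bessel-process observation was exactly the right ingredient --- it just has to be used for a probabilistic, rather than deterministic, domination.
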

\begin{proof}
It is sufficient to prove that for any $t\geq 0$ we have $\E[\sup_{s\in[0,t]} M_s]<\infty$. This is sufficient, since if $(\sigma_k)_{k\in\BB N}$ are stopping times such that $(M_{\sigma_k\wedge t})_{t\geq 0}$ are martingales and $\sigma_k\rta\infty$ a.s., we can use the dominated convergence theorem to argue that $M_{\sigma_k\wedge t}\rta M_t$ in $L^1$ for each $t\geq 0$. 

Applying It\^o's formula and the Loewner equation, we have that both $z^R_t-W_t$ and $W_t-z^L_t$ are constant multiples of Bessel processes of dimension $1+\tfrac{4}{\kappa'} < 2$.  Since the law of a Bessel process of dimension $\delta$ is stochastically dominated by the law of a Bessel process of dimension $\delta'$ provided $0 < \delta < \delta'$, it follows that there exist two stochastic processes $\wh B^R,\wh B^L$ which are constant multiples of two-dimensional Bessel processes such that $z^R_t-W_t\leq \wh B^R_t$ and $z^L_t-W_t\leq \wh B^L_t$ for all $t\geq 0$. Since $\wh B^R$ and $\wh B^L$ each have the law of the modulus of a two-dimensional Brownian motion, Doob's maximal inequality implies that 
\eqb
\begin{split}
	\P\!\left[\sup_{s\in[0,t]} | z^q_s-W_s | >x\right] =o_x^\infty(x) \quad\text{for}\quad q \in \{L,R\}.
\end{split}
\label{eq:eucl9}
\eqe
Using that $|z_t^R - z_t^L| \leq 2 \max(|z_t^L - W_t|,|z_t^R - W_t|)$, $\rho > 0$ (so that all of the exponents in the definition of $M_t$ are positive), and the sum of the exponents in the definition of $M_t$ is equal to $\tfrac{\kappa'}{2}-2$, we have that
\eqbn
\E\!\left[\sup_{s\in[0,t]} M_s\right]
\preceq \E\!\left[\sup_{s\in[0,t]} (|z_s^R-W_s|\vee 1)^{\kappa'/2-2}\right]
+\E\!\left[\sup_{s\in[0,t]} (|z_s^L-W_s|)\vee 1)^{\kappa'/2-2}\right]
<\infty.
\eqen
\end{proof}

We have $M_0=(z_L+z_R)^{\rho^2/(2\kappa')}
z_L^{\rho/\kappa'}
z_R^{\rho/\kappa'}$, i.e., $M_0$ has the same exponents as the probability of the event $E^1_{z_L,z_R}$ in Proposition~\ref{prop:Euclidean0}. In order to prove the estimate~\eqref{eq:eucl1} for $T=1$ it is therefore sufficient to prove that $\P[E^1_{z_L,z_R}]$ is approximately equal to the expected value of $(M_t)_{t\geq 0}$ stopped at some appropriate stopping time. This is our strategy for the proof both of the upper bound and of the lower bound in~\eqref{eq:eucl1}.

\subsection{Euclidean upper bound}
\label{sec:upper-bound-Euc}
As indicated above we will establish the upper bound of $\P[E^1_{z_L,z_R}]$ in Proposition~\ref{prop:Euclidean0} by defining an appropriate stopping time for $(M_t)_{t\geq 0}$. We will use the following stopping time $\sigma_u$ for some $u>0$:
\eqb
\sigma_u=\inf \{t\geq 0: M_t = (z_Rz_L)^u \;\textrm{or}\;M_t=0\}.
\label{eq:eucl4}
\eqe
In order to prove that $\P[E^1_{z_L,z_R}]$ is bounded above by $\E[M_{\sigma_u}]=\E[M_0]$ (up to $o(1)$ errors) it is sufficient to prove that $\sigma_u<1$ with very high probability for small $z_L,z_R$. This is sufficient since $\E[M_{\sigma_u}]$ is approximately equal to $\P[M_{\sigma_u}>0]$ for small $u$. The following two technical lemmas will help us establish that $\sigma_u<1$ with very high probability. In Lemma~\ref{lem:eucl1} we prove that with very high probability $\op{Im}\eta'(t)$ does not stay close to the real line for all $t\in[0,1]$. Lemma~\ref{lem:eucl2} implies a lower bound for $M_t$ in terms of $\op{Im}\eta'(t)$.

\begin{lem}
For each $\ep>0$ we let
\eqbn
F_\ep:= \left\{ \sup_{t \in [0,1]} \Im (\eta'(t))\le \ep \right\}.
\eqen
Then $\P[F_{\ep}]=o^\infty_\ep(\ep)$.
\label{lem:eucl1}
\end{lem}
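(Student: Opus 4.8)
The plan is to show that it is very unlikely that a chordal $\SLE_{\kappa'}$ from $0$ to $\infty$ in $\bbH$ stays within the thin strip $\{0 < \Im z \le \epsilon\}$ for the entire unit time interval, when time is parameterized by half-plane capacity. The key point is that staying in a strip of height $\epsilon$ for a long capacity-time is quantitatively costly, because accumulating a fixed amount of half-plane capacity while remaining in a bounded-height region forces the curve to travel a long horizontal distance, and each ``unit'' of such horizontal travel has a probability cost bounded away from $1$, with the costs being roughly independent across successive stretches by the domain Markov property.

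First I would make the strip-confinement cost precise. Suppose $\eta'$ is confined to $S_\epsilon := \{0 < \Im z \le \epsilon\}$ up to capacity time $1$. Half-plane capacity is additive and, for a curve in a strip of height $\epsilon$, the capacity gained over a time interval is comparable to (a constant times) $\epsilon$ times the horizontal extent swept; more precisely, if $\eta'|_{[0,1]}\subset S_\epsilon$ then $\eta'([0,1])$ must have Euclidean diameter $\succeq 1/\epsilon$ (otherwise its half-plane capacity, which is at most a constant times (diameter)$^2$ when the set has bounded height $\epsilon$ — indeed at most $C\epsilon\cdot(\text{diameter})$ — could not reach $1$). Here I would invoke the standard estimate $\mathrm{hcap}(A)\le C\,\mathrm{rad}(A)\cdot\sup_{z\in A}\Im z$ for a hull $A$, so $\mathrm{rad}(\eta'([0,1])) \succeq 1/\epsilon$ on $F_\epsilon$.

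Next I would chop the capacity interval $[0,1]$ into $N \asymp 1/\epsilon$ successive sub-intervals, across each of which the curve gains capacity $\asymp \epsilon$, hence (being confined to height $\le\epsilon$) travels a horizontal distance that is $\asymp 1$ in at least a positive fraction $c N$ of them — here one should set it up so that at each of $\asymp N$ successive stopping times $\tau_j$ the tip has moved horizontal distance $\ge 1$ from the previous tip while staying in the strip. Using the domain Markov property of SLE at each $\tau_j$ together with conformal invariance, the conditional probability, given $\mcl F_{\tau_j}$, that the future of the curve both stays in the (conformal image of the) strip and travels the next unit of horizontal distance is bounded above by some $p<1$ depending only on $\kappa'$: this is a one-step event of positive probability of failure, since with positive probability an $\SLE_{\kappa'}$ in (a perturbation of) a strip-like domain exits the strip before advancing (for $\kappa'>4$ the curve hits the boundary, and for the event we only need a lower bound on the probability that $\Im\eta'$ exceeds $\epsilon'$-scale in conformal coordinates, equivalently exceeds $\epsilon$ before advancing). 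Multiplying these $\asymp N \asymp 1/\epsilon$ conditionally independent-ish factors gives $\P[F_\epsilon] \le p^{c/\epsilon}$, which is $o_\epsilon^\infty(\epsilon)$ as $\epsilon\to 0$.

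The main obstacle I anticipate is the ``one-step'' estimate: showing that the per-stretch failure probability $p<1$ is genuinely uniform, because after applying the Loewner map $g_{\tau_j}$ the remaining domain is $\bbH$ but the image of the strip constraint is some complicated region, and one must argue that the constraint ``stay below the image of height $\epsilon$'' still has probability bounded away from $1$ uniformly in the configuration at time $\tau_j$. I would handle this by working directly in $\bbH$ (not passing to a fixed reference domain): at each $\tau_j$, the conditional law of $\eta'|_{[\tau_j,\infty)}$ given $\mcl F_{\tau_j}$ is a chordal $\SLE_{\kappa'}$ in $\bbH\setminus\eta'([0,\tau_j])$ from the tip to $\infty$; since $\kappa'>4$ this curve a.s.\ has points with imaginary part $> \epsilon$ before its real part advances by $1$ past the current tip (an event of positive probability, uniformly, by scale-invariance and a crude compactness/continuity argument for the SLE law as a function of domain), and on $F_\epsilon$ none of these $\asymp 1/\epsilon$ positive-probability events occur. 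An alternative, cleaner route — which I would actually prefer and would present as the main argument — is to avoid the geometry of the image domain altogether: use the explicit description of $\Im\eta'$ or simply the fact that the modulus $|W_t - z^q_t|$-type processes (or the conformal radius seen from a point of imaginary part $\asymp\epsilon$) are time-changed Bessel processes, and bound the probability that such a process stays in a small window for capacity time $1$; combined with the capacity-versus-geometry estimate above this again yields stretched-exponential decay. In either approach the quantitative heart is: confinement to a height-$\epsilon$ strip for unit capacity time $\Rightarrow$ order $1/\epsilon$ independent costly events $\Rightarrow$ probability $\le e^{-c/\epsilon} = o_\epsilon^\infty(\epsilon)$.
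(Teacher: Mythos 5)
Your overall skeleton---iterate, via the domain Markov property, an escape event whose conditional failure probability is bounded away from $1$, so that confinement for the whole interval costs $(1-p)^N$ with $N\to\infty$---is the same as the paper's (the paper rescales so the strip has height $1$ and uses $N=\epsilon^{-2}$ unit capacity-time blocks; you use $N\asymp\epsilon^{-1}$ unit-horizontal-advance blocks; either count gives $o_\epsilon^\infty(\epsilon)$). The genuine gap is precisely the step you yourself flag as ``the main obstacle'': the uniformity of the one-step estimate over the past configuration $\eta'([0,\tau_j])$. Neither of your proposed resolutions is a proof. There is no compactness available: the hulls $\eta'([0,\tau_j])$ range over a non-compact family of unbounded diameter, and continuity of the SLE law in the domain would not by itself give a uniform lower bound on the probability of an event defined by the geometry of the image of the line $\{\Im z=\epsilon\}$ under $g_{\tau_j}$, which can be an arbitrarily wild separating curve. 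The Bessel-process alternative also does not apply as stated: the processes $z^q_t-W_t$ track marked boundary points, not the height of the tip, and $\Im \eta'(t)$ is not a time-changed Bessel process.

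The paper closes this gap with one elementary observation your argument is missing: the Loewner flow decreases imaginary parts, $\Im g_t(z)\le \Im z$ (see \cite[Equation~(4.5)]{lawler-book}). Consequently, after rescaling so the strip has height $1$, the image $l_n'=g_n(l)$ of the horizontal line $l$ at height $1$ lies entirely at height $\le 1$ and still separates $\BB H$, with the current tip below it; so the image curve, which by the conformal Markov property is a fresh chordal $\SLE_{\kappa'}$ started on $\R$, must cross $l_n'$ before it crosses $l$, and the probability that it crosses $l$ within one unit of capacity time is the fixed constant $p=\P[\,\exists\, t<1:\Im\eta'(t)\ge 1\,]>0$, independent of the past. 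This monotonicity device is what turns your heuristic per-block bound into a uniform one; without it (or a substitute of comparable strength) the induction does not close. Your preliminary reduction via $\op{hcap}(A)\preceq \diam(A)\cdot\sup_{z\in A}\Im z$ is correct but becomes unnecessary once one decomposes by capacity time rather than by horizontal distance.
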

\begin{proof}
	By scale invariance of SLE the statement of the lemma is equivalent to the statement that if $F'_n:=\{ \sup_{t \in [0,n]} \Im (\eta'(t))\le 1\}$ for $n\in\BB N$ then we have $\P[F'_{n}]=o^\infty_n(n)$. Define the stopping time $\wt T$ by 
	\eqb
	\wt T:=\inf\{t\geq 0\,:\, \op{Im}(\eta'(t))\geq 1 \}. 
	\label{eq:eucl5}
	\eqe
	It is sufficient to prove that there is a constant $p>0$ s.t.\ for all $n\in\BB N$\ 
	\eqb
	\P[\wt T<(n+1)\,|\,\mcl F_{n}]\geq p,
	\label{eq:eucl3}
	\eqe 
	since this bound implies $\P[F'_n]\leq (1-p)^{n}=o_n^\infty(n)$. Here $\mcl F_n$ is as in~\eqref{eq:sle-filtration}.
	
	Define $p:=\P[\wt T<1]$. Since~\eqref{eq:eucl3} clearly holds on the event that $\wt T\leq n$ we assume $\wt T>n$. Define $l:=\{x+i\,:\,x\in\R\}$, and for each $n\in\BB N$ define $l'_n:=\{g_n(z)\,:\,z\in l\}$. By \cite[Equation (4.5)]{lawler-book} each $z\in l'_n$ satisfies $\op{Im}(z)\leq 1$, and $l'_n$ is a connected set dividing the upper half plane into an upper and a lower part, hence the SLE $g_n(\eta')$ hits $l'_n$ before it hits $l$. The estimate~\eqref{eq:eucl3} follows by the conformal Markov property of $\SLE$.
\end{proof}

\begin{lem}
Consider the stopping time $\wt T$ defined by~\eqref{eq:eucl5}. Let $S\subset\BB H$ denote the right boundary of $\eta'([0,\wt T])$, and let $\lambda$ denote Lebesgue measure on $\BB R$. Then there is a universal constant $c>0$ such that $\lambda(g_{\wt T}(S)) >c$.  The same likewise holds if we instead take $S$ to be the left boundary of $\eta'([0,\wt{T}])$.
\label{lem:eucl2}
\end{lem}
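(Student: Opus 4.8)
The first move is to recognize that $\lambda(g_{\wt T}(S))$ is exactly a boundary--image length of the Loewner flow. Let $K_{\wt T}$ denote the hull generated by $\eta'([0,\wt T])$, let $W_{\wt T}$ be the driving point (so $g_{\wt T}$ sends the tip $p:=\eta'(\wt T)$ to $W_{\wt T}$), and let $V^+_{\wt T}:=\lim_{x\downarrow a^+}g_{\wt T}(x)$, where $a^+$ is the rightmost point of $K_{\wt T}\cap\BB R$. Then $g_{\wt T}$ maps the right frontier $S$ onto the interval $[W_{\wt T},V^+_{\wt T}]$, so $\lambda(g_{\wt T}(S))=V^+_{\wt T}-W_{\wt T}$ (up to a Lebesgue--null set of extra prime--end identifications). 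By conformal invariance of harmonic measure, the hydrodynamic normalization of $g_{\wt T}$ near $\infty$, and the elementary asymptotic $\omega(iy',[a,b];\BB H)=\frac{b-a}{\pi y'}(1+o(1))$ for a bounded interval, this in turn equals $\pi\lim_{y\to\infty} y\,\omega(iy,S;\BB H\setminus K_{\wt T})$, where $\omega(\,\cdot\,,S;\BB H\setminus K_{\wt T})$ is the probability that Brownian motion exits $\BB H\setminus K_{\wt T}$ through $S$. It therefore suffices to show $\omega(iy,S;\BB H\setminus K_{\wt T})\ge \frac{c_*}{\pi}\,y^{-1}(1+o_y(1))$ for a universal $c_*>0$; the same argument with $S$ replaced by $S'$ will then give the left--frontier statement.

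The key geometric input is that, by the definition of $\wt T$ and continuity of $\eta'$, we have $K_{\wt T}\subseteq\{\Im z\le 1\}$ while $\Im p=1$. Fix a small universal constant $\delta>0$ and set $z_0:=p+i\delta$, so that $\bar B(z_0,\delta/2)\subseteq\{\Im z>1\}\subseteq\BB H\setminus K_{\wt T}$. A Brownian motion started from $iy$ stays in $\{\Im z>1\}$ until it hits $\{\Im z=1\}$, so by the strong Markov property
\[
\omega(iy,S;\BB H\setminus K_{\wt T})\;\ge\;\P_{iy}\!\bigl[\text{hit }\bar B(z_0,\delta/2)\text{ before }\{\Im z=1\}\bigr]\cdot\inf_{z\in\bar B(z_0,\delta/2)}\omega(z,S;\BB H\setminus K_{\wt T}).
\]
The first factor is a half--plane hitting probability equal to $c_1(\delta)\,y^{-1}(1+o_y(1))$ for an explicit $c_1(\delta)>0$ depending only on $\delta$; crucially it does not depend on $\Re p$, since after translating $\{\Im z=1\}$ to $\BB R$ the target ball is $\bar B(\,\Re p-i(1-\delta),\delta/2)$ and $\P_{iy}[\text{hit }\bar B(\xi+i\eta,r)\text{ before }\BB R]\sim\frac{2\eta}{y\log(2\eta/r)}$ uniformly in $\xi$. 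It remains to bound the second factor below by a universal $c_2>0$.

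For the second factor, domain monotonicity gives $\omega(z,S;\BB H\setminus K_{\wt T})\ge\omega(z,S\cap\bar B(p,1/2);(\BB H\setminus K_{\wt T})\cap B(p,1/2))$, and $B(p,1/2)$ does not meet $\BB R$. Since $S$ joins $p$ to a point of $\BB R$ at distance $\ge\Im p=1$ from $p$, the set $S\cap\bar B(p,1/2)$ contains a connected arc from $p$ to $\partial B(p,1/2)$, of diameter $\ge 1/2$; as $\dit(z,S)\le |z-p|\le 3\delta/2$, a Beurling--type estimate shows that Brownian motion from $z$ hits this arc before leaving $B(z,1/4)\subseteq B(p,1/2)$ with probability $\ge 3/4$, say, once $\delta$ is small. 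To turn this into a lower bound for $\omega(z,S;\BB H\setminus K_{\wt T})$ one must in addition ensure the motion reaches $S$ before the left frontier $S'$; here one uses that $\BB H\setminus K_{\wt T}$ is locally connected at $p$ and that $K_{\wt T}$ lies below the horizontal line through $p$, so that near $p$ the complement looks like (the complement of) a downward slit or wedge — and the conformal map straightening this sends $z_0=p+i\delta$, and the points near it, to points whose argument is bounded away from $0$ and $\pi$, whence each of $S$ and $S'$ carries at least a universal fraction of the harmonic measure from such points. Combining these inputs yields $\omega(z,S;\BB H\setminus K_{\wt T})\ge c_2>0$ for a suitably small universal $\delta$, and hence $\lambda(g_{\wt T}(S))=\pi\lim_{y\to\infty} y\,\omega(iy,S;\BB H\setminus K_{\wt T})\ge \pi\,c_1(\delta)\,c_2=:c>0$. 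Since $z_0=p+i\delta$ sits symmetrically with respect to $S$ and $S'$, the same estimate applies verbatim to the left frontier.

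The step I expect to be the main obstacle is the second factor, specifically the passage from ``Brownian motion from $z_0$ hits the connected set $S$ (as a subset of the plane) with good probability'' to ``Brownian motion in $\BB H\setminus K_{\wt T}$ from $z_0$ exits through $S$ rather than through $S'$''. This requires control of the local geometry of the SLE hull at the (random, fixed--time) tip $p$ — local connectedness of the complement there and the fact that $K_{\wt T}$ meets a neighbourhood of $p$ in a downward--facing set — together with the quantitative conformal estimate that makes the argument bounded away from the degenerate configurations (the choice of $z_0$ directly above $p$ is precisely what rules these out). If a convenient reference for the requisite boundary regularity of the SLE frontier at $p$ is available it should be invoked; otherwise this is where the bulk of the work lies.
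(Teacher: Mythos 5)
Your reduction of $\lambda(g_{\wt T}(S))$ to $\pi\lim_{y\to\infty} y\,\omega(iy,S;\BB H\setminus K_{\wt T})$ and the two-step strong Markov decomposition (first steer the Brownian motion to a fixed location near the tip $p=\eta'(\wt T)$, paying a factor $\asymp y^{-1}$, then bound below the conditional probability of exiting through $S$) is exactly the architecture of the paper's proof, and the first factor is handled correctly. The problem is the second factor, which you yourself flag as "where the bulk of the work lies": you never actually prove that from $z_0=p+i\delta$ the harmonic measure of the right frontier $S$ (as a collection of prime ends, not as a planar set) is bounded below by a universal constant. The Beurling step only gives that the motion hits the hull somewhere near $p$, and your proposed fix --- that "near $p$ the complement looks like (the complement of) a downward slit or wedge" so that a straightening map sends $z_0$ to a point of argument bounded away from $0$ and $\pi$ --- presupposes precisely the local boundary regularity of the hull at the tip that is not available: for $\kappa'>4$ the hull boundary is a non-smooth fractal, the curve is non-simple, and no such slit/wedge normal form at a fixed stopping time is established (nor easy to establish). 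As written, the argument is circular at its decisive step.

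The paper closes this gap with a purely topological comparison that needs no regularity of the hull whatsoever. Starting the Brownian motion from a point $z$ on the horizontal segment $I_{\eta'(\wt T)}$ at height $2$ above the tip, it introduces the deterministic hook $K'_{p}=[p-i-1,p-1]\cup[p-1,p]$ (a vertical segment down to $\BB R$ plus a horizontal segment at height $1$ ending at the tip) and observes that any Brownian path started at height $\geq 2$ which exits $\BB H\setminus K'_{p}$ through the \emph{underside} of $[p-1,p]$ must first wind around the right of the tip and pass below height $1$ there; since the actual hull is connected, joins $p$ to $\BB R$, and lies in $\{\Im w\le 1\}$, such a path necessarily exits $\BB H\setminus\eta'([0,\wt T])$ through the right frontier $S$. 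Hence $\{\mcl B_{\tau'}\in S'_{p}\}\subset\{\mcl B_{\ol\tau}\in S\}$, and the probability of the former event is a universal positive constant by translation invariance, since $K'_p$ is a fixed deterministic shape. If you want to keep your starting point $z_0=p+i\delta$ instead of the paper's height-$2$ segment, you would still need this comparison (or an equivalent deterministic barrier argument) to convert "hits the hull near $p$" into "exits through $S$"; without it the proof is incomplete.
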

\begin{proof}
We assume without loss of generality that $S$ is equal to the right boundary of $\eta'([0,\wt T])$.  Let $(\mcl B_t)_{t\geq 0}$ be a Brownian motion in $\BB C$ independent of $\eta'$, and for each $z\in\BB H$ let $\P^z[\cdot]$ be the law under which $\mcl B_0=z$. For each $z\in\BB H$ let $I_z$ be the horizontal line segment from $i+z-1$ to $i+z+1$, and define the two stopping times $\ol\tau$ and $\wh\tau$ for $(\mcl B_t)_{t\geq 0}$ (conditioned on $\mcl F_{\wt T}$) as follows.
\eqbn
\begin{split}
\ol\tau = \inf\{t\geq 0\,:\, \mcl B_t\not\in\BB H\backslash \eta'([0,\wt T]) \}, \qquad
\wh\tau = \inf\{t\geq 0\,:\, \op{Im} \mcl B_t=2 \}.
\end{split}
\eqen
By conformal invariance of Brownian motion and the explicit expression for the Poisson kernel of $\BB H$, see \cite[Exercise 2.23]{lawler-book}, we have
\eqb
\begin{split}
\lambda(g_t(S)) 
&= \lim_{y\rta\infty} \pi y \P^{iy}[\mcl B_{\ol\tau}\in S\,|\,\mcl F_{\wt T}]\\
&\geq \lim_{y\rta\infty} \pi y \P^{iy}[\mcl B_{\wh\tau}\in I_{\eta'(\wt T)}\,|\,\mcl F_{\wt T}] \times \inf_{z\in I_{\eta'(\wt T)}} \P^z[\mcl B_{\ol\tau}\in S\,|\,\mcl F_{\wt T}].
\end{split}
\label{eq:eucl6}
\eqe
By using the explicit formula for the Poisson kernel of $\BB H$ it holds a.s.\ that
\eqb
\lim_{y\rta\infty} \pi y \P^{iy}[\mcl B_{\wh\tau}\in I_{\eta'(\wt T)}\,|\,\mcl F_{\wt T}]=2. 
\label{eq:eucl7}
\eqe
For each $z\in\BB H$ let $K'_z$ be the union of the line segments $[z-i-1,z-1]$ and $[z-1,z]$, let $S'_z$ be the subset of the boundary of $\BB H\backslash K'_z$ corresponding to the lower part of $[z-1,z]$ (viewing the boundary of $\BB H\backslash K'_z$ as a collection of prime ends), and let $\tau':=\inf\{t\geq 0\,:\, \mcl B_t\not\in\BB H\backslash K'_{\eta'(\wt T)}\}$. If $\op{Im} \mcl B_0\geq 2$ it holds by a geometric argument that $\{\mcl B_{\tau'}\in S'_{\eta'(\wt T)}\}\subset \{\mcl B_{\ol\tau}\in S\}$. Therefore 
\eqbn
\begin{split}
	\inf_{z\in I_{\eta'(\wt T)}} \P^z[\mcl B_{\ol\tau}\in S\,|\,\mcl F_{\wt T}]
	\geq \inf_{z\in I_{\eta'(\wt T)}} \P^z[\mcl B_{\tau'}\in S'_{\eta'(\wt T)}\,|\,\mcl F_{\wt T}]
	= \inf_{z\in I_i} \P^z[\mcl B_{\tau'}\in S'_i]\succeq 1.
\end{split}
\eqen
This estimate combined with~\eqref{eq:eucl6} and~\eqref{eq:eucl7} implies the assertion of the lemma.
\end{proof}

\begin{proof}[Proof of upper bound in~\eqref{eq:eucl1} for $T=1$]
It is sufficient to prove that given any $u>0$
\begin{equation}
	\P[E^1_{z_L,z_R}]\preceq (z_Rz_L)^{-u}z_R^{\rho/\kappa'} z_L^{\rho/\kappa'} (z_R+z_L)^{\rho^2/(2\kappa')}.
	\label{eq:eucl11}
\end{equation}
Recall the definition~\eqref{eq:eucl4} of $\sigma_u$. The process $(M_{\sigma_u\wedge t})_{t\geq 0}$ is a bounded martingale, hence the optional stopping theorem implies $\E[M_{\sigma_u}]=M_0$. This implies further that
\eqb
\begin{split}
	\P[M_{\sigma_u}=(z_Rz_L)^{u}]
	&= (z_Rz_L)^{-u}\E[M_{\sigma_{u}}]\\
	&= (z_Rz_L)^{-u}\E[M_0]= (z_Rz_L)^{-u}z_R^{\rho/\kappa'} z_L^{\rho/\kappa'} (z_R+z_L)^{\rho^2/(2\kappa')}.
\end{split}
\label{eq:eucl10}
\eqe
We claim that for each $u>0$ there exists some sufficiently small $s>0$ only depending on $u$ such that $\{ \sigma_u\ge 1\}\subset F_{(z_Rz_L)^s}$ for sufficiently small $z_L,z_R$, with the latter event defined as in Lemma~\ref{lem:eucl1} with $\ep = (z_Rz_L)^s$. Define the stopping time $\wt T_s$ by $\wt T_s:=\inf\{t\geq 0\,:\,\op{Im}(\eta'(t))\geq (z_Rz_L)^s \}$. If $F_{(z_Rz_L)^s}$ does not occur and $M_1\neq 0$, Lemma~\ref{lem:eucl2} implies that $z_{\wt T_s}^L-W_{\wt T_s}>c(z_Rz_L)^s$ and $W_{\wt T_s}-z_{\wt T_s}^R>c(z_Rz_L)^s$, where $c$ is the constant in the statement of the lemma. Hence $M_{\wt T_s}>(z_Rz_L)^u$ for sufficiently small $s,z_L,z_R$, so $\sigma_u<1$ and the claim follows. We have
\eqbn 
E^1_{z_L,z_R}\cap\{M_{\sigma_u}=0 \}\subset \{ \sigma_u\ge 1\}\subset F_{(z_Rz_L)^s} .
\eqen
By Lemma~\ref{lem:eucl1} and~\eqref{eq:eucl10} we have
\eqbn
\begin{split}
\P[E^1_{z_L,z_R}] &\leq \P[F_{(z_Rz_L)^s}] + \P[E^1_{z_L,z_R};M_{\sigma_u}=(z_Rz_L)^{u}]\\
&\preceq z_R^{\rho/\kappa'+o_{u}(1)} z_L^{\rho/\kappa'+o_{u}(1)} (z_R+z_L)^{\rho^2/(2\kappa')}.
\end{split}
\eqen
The result follows since $u > 0$ was arbitrary.
\end{proof}

\subsection{Euclidean lower bound}
\label{sec:lower-bound-Euc}
Recall the definition of $E^1_{z_L,z_R}$ and $(M_t)_{t\geq 0}$ in Proposition~\ref{prop:Euclidean0} and Lemma~\ref{thm:martingale}, respectively. In this section we will prove that $\P[E^1_{z_L,z_R}]$ is bounded below by $\E[M_1]=M_0$ up to $o(1)$ errors in the exponents. In order to prove this estimate we need to show that the contribution to $\E[M_1]$ of large values of $M_1$ is very small.

\begin{proof}[Proof of lower bound in~\eqref{eq:eucl1} for $T=1$]
Since $(M_t)_{t\geq 0}$ is a martingale by Lemma~\ref{lem:eucl3}, we have that
\begin{equation*}
M_0=\E[M_1]=\E[M_1;M_1>(z_Rz_L)^{-u}]+\E[M_1;0<M_1\le(z_Rz_L)^{-u}].
\end{equation*} 
Since $|z_t^q - W_t| \leq |z_t^R - z_t^L|$ for $q \in \{L,R\}$ and all of the exponents in the definition of $M_t$ are positive and sum to $\tfrac{\kappa'}{2}-2$, we have that $M_t\leq |z^R_t-z^L_t|^{\kappa'/2-2}$.  Therefore,
\begin{equation} 
\begin{split}
\E\!\left[M_1; M_1>(z_Rz_L)^{-u}\right]&\le \E\!\left[ |z^R_1-z^L_1|^{\kappa'/2-2};|z^R_{1}-z^L_1|^{\kappa'/2-2}>(z_Rz_L)^{-u}\right]\\
&=o_{z_Rz_L}^\infty(z_Rz_L),
\end{split}
\label{eq1:eucl8}
\end{equation} 
where the last equality follows from large deviation estimates for Bessel processes as in the proof of Lemma~\ref{lem:eucl3}. It follows that $\E[M_1; M_1>(z_Rz_L)^{-u}]<\frac 12 M_0$ if either $z_R$ or $z_L$ is sufficiently small, and therefore
\begin{equation*}
M_0\preceq \E[M_1;0<M_1\le(z_Rz_L)^{-u}]
\le(z_Rz_L)^{-u}\P(M_1>0),
\end{equation*}
which implies that
\eqbn
\P[E^1_{z_L,z_R}]=
\P[M_1>0]\succeq z_R^{\rho/\kappa'+o_{z_R}(1)} z_L^{\rho/\kappa'+o_{z_L}(1)} (z_R+z_L)^{\rho^2/(2\kappa')}. \qedhere
\eqen
\end{proof}

\subsection{Proof of Proposition~\ref{prop:Euclidean0}}
By the scaling property of SLE it is sufficient to prove the estimates~\eqref{eq:eucl1} and~\eqref{eq:eucl2} for $T=1$ and $r=1$, respectively. Combining the results of Sections~\ref{sec:upper-bound-Euc} and~\ref{sec:lower-bound-Euc} we have proved~\eqref{eq:eucl1} for $T=1$. To prove the estimate~\eqref{eq:eucl2} and hence complete the proof of Proposition~\ref{prop:Euclidean0}, it suffices to show that $T_1$ is of order 1 with high probability. 

\begin{proof}[Proof of Proposition~\ref{prop:Euclidean0}]
The lower bound of~\eqref{eq:eucl2} is immediate from~\eqref{eq:eucl1}, since the half-plane capacity of $\eta'$ stopped upon hitting $\partial \BB D \cap \BB H$ is bounded above by the half-plane capacity of $\BB D \cap \BB H$, which implies that we a.s.\ have $T_1\preceq 1$. To complete the proof of the proposition we need to prove the upper bound of~\eqref{eq:eucl2}. Let $\lambda$ denote Lebesgue measure on $\BB R$. By \cite[Equation (3.14)]{lawler-book} and the surrounding text we have
\eqbn
\lambda(g_{T_1}(\eta'([0,T_1]))) \geq c > 0,
\eqen
where the constant $c$ is universal. Conditioned on $E^{T_1}_{z_L,z_R}$ we have
\eqbn
(z^R_{T_1}-W_{T_1})+ (W_{T_1}-z^L_{T_1}) \geq \lambda(g_{T_1}(\eta'([0,T_1]))),
\eqen
hence at least one of the following inequalities holds on $E^{T_1}_{z_L,z_R}$: $z^R_{T_1}-W_{T_1}\geq c/2$ or $W_{T_1}-z^L_{T_1}\geq c/2$. By large deviation estimates for Bessel processes as in the proof of Lemma~\ref{lem:eucl3} we have that for any $u>0$
\eqb \label{eq:eucl-error}
\P\!\left[E^{T_1}_{z_L,z_R}; T_1<(z_Lz_R)^u \right] \preceq
\sum_{q \in \{L,R\}} \P\!\left[\sup_{t\in[0, (z_Lz_R)^u]} \left| z^q_{t}-W_{t} \right| \geq c/2\right]
= o_{z_Lz_R}^\infty(z_Lz_R).
\eqe 
We conclude the proof of the proposition by observing that
\eqbn
	\P[E^{T_1}_{z_L,z_R}] \leq 
	\P[E^{(z_Lz_R)^u }_{z_L,z_R}
	]
	+ \P[E^{T_1}_{z_L,z_R}; T_1<(z_Lz_R)^u ],
\eqen
which by~\eqref{eq:eucl11} and~\eqref{eq:eucl-error} implies that
\eqbn
	\P[E^{T_1}_{z_L,z_R}] =(z_L+z_R)^{\rho^2/(2\kappa')}
	z_L^{\rho/\kappa'+o_{z_L}(1)}
	z_R^{\rho/\kappa'+o_{z_R}(1)}. \qedhere
\eqen
\end{proof}

\section{The quantum exponent}
\label{sec:KPZ}

In this section, we will calculate the probability of a certain event associated with the $\gamma$-quantum boundary measure of a $\frac32\gamma$-quantum wedge.  Throughout this section we will make use of the convention introduced in Section~\ref{sec:surface}, namely we fix $\kappa' > 8$ and $\gamma = 4/\sqrt{\kappa'} \in (0,\sqrt 2)$ and do not make dependence on $\kappa',\gamma$ explicit.  The main result of this section is the following proposition. 

\begin{prop}
\label{prop:quantum-exponent}
Let $\frk h$ be the circle average embedding of a $\frac32\gamma$-quantum wedge in $(\BB H , 0, \infty)$, as in Definition~\ref{def:wedge-free}, let $\nu_{\hwedge}$ be the $\gamma$-quantum boundary measure induced by $\hwedge$, and let $\eta'$ be a chordal $\op{SLE}_{\kappa'}$ in $\BB H$ from $0$ to $\infty$ independent of $\hwedge$. For $r > 0$, let
\eqb \label{eq:hit-time}
T_r := \inf\left\{t > 0 \,:\, |\eta'(t)| = r\right\}  
\eqe  
and for $\delta >0$ let
\eqb \label{eq:hit-time-event}
E_\delta^{T_r} := \left\{ \nu_{\hwedge}\left(\eta'([0,T_r])  \cap \BB R_- \right) \leq \delta \: \op{and} \: \nu_{\hwedge}\left(\eta'([0,T_r]) \cap \BB R_+ \right) \leq \delta  \right\} . 
\eqe 
For each fixed $r\in (0,1]$, we have
\eqbn
\P\!\left[E_\delta^{T_r} 
\right] = \delta^{4/\gamma^2 + o_\delta(1) } .
\eqen
\end{prop}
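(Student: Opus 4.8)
The plan is to combine the Euclidean SLE estimate of Proposition~\ref{prop:Euclidean0} with a KPZ-type moment analysis of the quantum boundary measure $\nu_{\hwedge}$, in the spirit of~\cite[Section~4]{shef-kpz}. The basic idea is that the event $E_\delta^{T_r}$ says that the two boundary arcs swallowed by $\eta'$ up to the time it exits $B_r(0)$ each have $\nu_{\hwedge}$-mass at most $\delta$. Since $\eta'$ is boundary-filling (it is an $\SLE_{\kappa'}$ with $\kappa'>8$, hence it absorbs intervals of $\partial\BB H$ as it goes), the swallowed left arc is an interval $[-x_L,0]$ and the swallowed right arc is an interval $[0,x_R]$ for random $x_L,x_R>0$ determined by $\eta'$; by Remark~\ref{rmk:E-equiv} the event $E_\delta^{T_r}$ is, up to the exact identification of $\eta'$ with $\wt\eta'$, the event $\{-\xwedge_{\delta,L},\xwedge_{\delta,R}\notin\eta'([0,T_r])\}$, where $\xwedge_{\delta,L},\xwedge_{\delta,R}$ are the points with $\nu_{\hwedge}([-\xwedge_{\delta,L},0])=\nu_{\hwedge}([0,\xwedge_{\delta,R}])=\delta$. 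Thus the plan is: (1) condition on the field $\hwedge$, so that $\xwedge_{\delta,L}=z_L$ and $\xwedge_{\delta,R}=z_R$ become deterministic, and apply the conditional probability bound $\P[E^{T_r}_{z_L,z_R}\mid\hwedge]=(z_L+z_R)^{\rho^2/(2\kappa')}z_L^{\rho/\kappa'+o(1)}z_R^{\rho/\kappa'+o(1)}$ from Proposition~\ref{prop:Euclidean0}; (2) integrate over the law of $(\xwedge_{\delta,L},\xwedge_{\delta,R})$; (3) estimate the resulting moments of the random Euclidean lengths $\xwedge_{\delta,L},\xwedge_{\delta,R}$ as $\delta\to0$ using the structure of the $\frac32\gamma$-quantum wedge field near $0$.

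The heart of the computation is step~(3): controlling the $p$-th moments of $\xwedge_{\delta,L}$ (and $\xwedge_{\delta,R}$) for the relevant exponent $p=\rho/\kappa' = (\kappa'-4)/\kappa'$, together with the cross term coming from $(z_L+z_R)^{\rho^2/(2\kappa')}$. For the circle average embedding, the field near $0$ looks like $h^F - \tfrac32\gamma\log|\cdot|$ with $h^F$ a free-boundary GFF, and on the scale $e^{-s}$ the semicircle average process $\hwedge^0(e^{-s})$ is a Brownian motion with drift $\tfrac32\gamma$ (reflected appropriately on the other side). A change of variables shows that $\nu_{\hwedge}([0,e^{-s}])$ is comparable to $\int_s^\infty e^{\gamma(\hwedge^0(e^{-u})-\frac{\gamma}{2}u)}\,du$ up to the fluctuating part $\hwedge^\dagger$; one expects $\xwedge_{\delta}\asymp \delta^{\gamma/(\gamma+\text{drift})}\cdot(\text{multiplicative chaos fluctuation})$ but the cleanest route is to directly estimate $\P[\nu_{\hwedge}([0,\eps])\le\delta]$ and $\E[\xwedge_\delta^p]$ using the Gaussian tails of the GFF and Girsanov/Cameron–Martin arguments as in~\cite[Section~4]{shef-kpz}. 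After this, one multiplies: $\E[(\xwedge_{\delta,L}+\xwedge_{\delta,R})^{\rho^2/(2\kappa')}\xwedge_{\delta,L}^{\rho/\kappa'}\xwedge_{\delta,R}^{\rho/\kappa'}] = \delta^{\alpha_0+o_\delta(1)}$ for an explicit exponent $\alpha_0$, and one must check that $\alpha_0 = 4/\gamma^2$. A sanity check: with $\rho=\kappa'-4$, $\gamma=4/\sqrt{\kappa'}$, and using $Q = 2/\gamma+\gamma/2$, the weight of a $\tfrac32\gamma$-wedge and the scaling relation for $\nu_{\hwedge}$ should force the sum of the boundary-exponent contributions to collapse to $\kappa'/4=4/\gamma^2$, matching~\eqref{eq:exponent-compare}; getting the bookkeeping of the three exponents together with the Jacobian of the length-to-scale change of variables to land on exactly this value is the main place where care is needed.

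The subtle points I anticipate. First, one must handle the $o_\delta(1)$ and $o_{z_L}(1)$ error terms carefully: the Euclidean estimate has errors in the exponents that deteriorate as $z_L,z_R\to0$, so one needs that $\xwedge_{\delta,L},\xwedge_{\delta,R}$ are polynomially small in $\delta$ with overwhelming probability, and one should truncate on the event that $\xwedge_{\delta,\cdot}\in[\delta^{C},\delta^{1/C}]$ for large $C$ — on the complement the contribution is $o_\delta^\infty(\delta)$ by Gaussian tail bounds for the wedge field. Second, $\eta'$ in Proposition~\ref{prop:quantum-exponent} is parameterized by half-plane capacity while $\wt\eta'$ is parameterized by quantum mass, but since $E_\delta^{T_r}$ and $E^{T_r}_{z_L,z_R}$ are both defined modulo reparameterization this is harmless — though one should say a word about it. Third, and this is where I expect the real work, proving the moment estimate $\E[\xwedge_{\delta,\cdot}^{\,p}] = \delta^{\,\gamma p/(Q-\frac32\gamma+\gamma/2)+o_\delta(1)} = \delta^{\gamma p/(Q-\gamma)+o_\delta(1)}$ (or whatever the correct drift-dependent exponent is) requires a genuine one-dimensional computation with the drifted Brownian motion $\hwedge^0$ plus an $L^p$ bound on the Gaussian multiplicative chaos contribution from $\hwedge^\dagger$ restricted to a scale; the chaos moment must be finite for the range of $p$ in play, which needs $p<4/\gamma^2$ on the boundary — and since $p=\rho/\kappa'<1<4/\gamma^2$ this is fine, but it must be invoked. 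I would organize the section as: a lemma computing $\E[\nu_{\hwedge}([0,\eps])^{\pm q}]$ type estimates (or equivalently the law of $\xwedge_\delta$) for the circle-average-embedded $\tfrac32\gamma$-wedge, then a short lemma transferring Proposition~\ref{prop:Euclidean0} conditionally on the field, then the integration and exponent arithmetic to conclude $\P[E_\delta^{T_r}]=\delta^{4/\gamma^2+o_\delta(1)}$.
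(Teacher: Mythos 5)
Your overall architecture is exactly the paper's: couple the circle-average embedding with the unscaled wedge field near $0$, condition on $\hwedge$ so that Proposition~\ref{prop:Euclidean0} applies with $z_L=\xwedge_{\delta,L}\wedge r$ and $z_R=\xwedge_{\delta,R}\wedge r$, and then compute the joint moments of these random Euclidean lengths as $\delta\to 0$. (Two small remarks: the paper truncates only from above at $r$ — the $\pm u$ errors in the exponents of Proposition~\ref{prop:Euclidean0} are uniform over $z_L,z_R\in(0,1)$, so your additional truncation to $[\delta^C,\delta^{1/C}]$ is unnecessary — and the cross term $(z_L+z_R)^{\rho^2/(2\kappa')}$ is absorbed via $(\eps_1+\eps_2)^p\asymp \eps_1^p+\eps_2^p$ and symmetry.)

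The gap is in the one step you yourself flag as the crux: the moment estimate for $\xwedge_{\delta,\cdot}$. The candidate formula you write, $\E[\xwedge_{\delta,\cdot}^p]=\delta^{\gamma p/(Q-\gamma)+o_\delta(1)}$, is \emph{linear} in $p$; this is the ``typical event'' exponent and it is not correct. The paper proves (Propositions~\ref{prop:free-boundary-length}--\ref{prop:boundary-KPZ})
\[
\lim_{\delta\to 0}\frac{\log \E\!\left[\ol x_{\delta,L}^{\lambda_1}\ol x_{\delta,R}^{\lambda_2}\right]}{\log\delta^{-1}}=\frac{a-\sqrt{a^2+4(\lambda_1+\lambda_2)}}{\gamma},\qquad a=Q-\tfrac32\gamma,
\]
via the Laplace transform $\E[e^{-\lambda\tau_\delta}]=\delta^{(\sqrt{a^2+4\lambda}-a)/\gamma}$ of the hitting time of level $\tfrac{2}{\gamma}\log\delta^{-1}$ by the drifted semicircle-average process $V_t=-h_{e^{-t}}(0)+Qt$, together with a negative-moment bound on the boundary measure showing that $e^{-\tau_\delta}$ is a good proxy for $\ol x_{\delta,L}$. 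Two features of this are essential and missing from your sketch: (i) the square-root (strictly concave) dependence on the moment order — with any linear formula the final arithmetic does not produce $4/\gamma^2$; and (ii) the fact that the joint moment depends only on $\lambda_1+\lambda_2$ (both lengths are governed by the same semicircle-average process), which is what allows the three Euclidean exponents to be combined into the single sum $\lambda_1+\lambda_2=(\kappa'-4)/2$ and then checked to give $\bigl(\sqrt{a^2+2(\kappa'-4)}-a\bigr)/\gamma=4/\gamma^2$. Also, the finiteness condition you invoke ($p<4/\gamma^2$ for positive GMC moments) is not the relevant one here; what is actually needed is finiteness of all \emph{negative} moments of $\nu_{h^F}$ of a boundary interval (Lemma~\ref{lem:free-gff-moments}), used to rule out $\nu_{\hwedge}([0,e^{-\tau}])$ being much smaller than $e^{-\frac{\gamma}{2}V_\tau}$.
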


\subsection{Moment estimates for the quantum boundary measure}
\label{sec:moments}

In this subsection we will state some estimates for the moments of a certain quantity associated with the quantum boundary measure induced by a free-boundary GFF on~$\BB H$, which will be proven in the next two subsections.
Let~$Q$ be as in~\eqref{eq:Q-def} and fix $\alpha \in [0,Q)$. For the proof of Proposition~\ref{prop:quantum-exponent} we only need the case where $\alpha = \frac32\gamma$ (note $\alpha < Q$ for $\gamma \in (0,\sqrt 2)$), but it is no more difficult to treat the general case. Also let
\eqb \label{eq:a-def}
a := Q - \alpha .
\eqe  
Let $h^F$ be a free boundary GFF on $\BB H$, normalized so that its semicircle average over $\bdy B_1(0) \cap \BB H$ is 0. Let $h := h^F - \alpha \log |\cdot|$, so that $h$ is an unscaled $\alpha$-quantum wedge as defined in \cite[Section~1.4]{wedges}.  Let $\nu_h$ be the $\gamma$-quantum boundary measure induced by $h$. 

Fix $r \in (0,1]$. For $\delta > 0$, let $x_{\delta,L} $ and $x_{\delta,R}$ be the non-negative random variables such that $\nu_h([-x_{\delta,L} , 0]) = \nu_h([0,x_{\delta,R}]) = \delta$. Let $\ol x_{\delta,L} =x_{\delta,L} \wedge r$ and $\ol x_{\delta,R} =x_{\delta,R} \wedge r$. In this subsection we will compute the joint moments of $\ol{x}_{\delta,L}$ and $\ol{x}_{\delta,R}$. This calculation, together with the estimate~\eqref{eq:eucl2} of Section~\ref{sec:Euclidean}, will be used to compute the probability in Proposition~\ref{prop:quantum-exponent}. 

\begin{proposition}\label{prop:free-boundary-length}
Let $\ol x_{\delta,L}$ and $\ol x_{\delta,R}$ for $\delta>0$ be as above. For $\lambda_1 , \lambda_2 > 0$ we have
\eqb \label{eq:exponent-quantum-length}
\lim_{\delta \rta 0} \frac{ \log \E\!\left[\ol x_{\delta,L}^{\lambda_1} \ol x_{\delta,R}^{\lambda_2} \right]}{\log \delta^{-1} }
 = \frac{a- \sqrt{a^2 + 4(\lambda_1 + \lambda_2)}  }{\gamma} ,
\eqe  
with $a$ as in~\eqref{eq:a-def}.
\end{proposition}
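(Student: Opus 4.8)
The plan is to compute the joint moments $\E[\ol x_{\delta,L}^{\lambda_1}\ol x_{\delta,R}^{\lambda_2}]$ by exploiting the exact scaling/Markov structure of the unscaled $\alpha$-quantum wedge $h = h^F - \alpha\log|\cdot|$. The key object is the semicircle average process: let $B_s$ denote the average of $h$ over $\bdy B_{e^{-s}}(0)\cap\BB H$, so that by the definition of the circle-average embedding (see Remark~\ref{rmk:wedge-free}) we have $B_s = \mcl B_{2s} + \alpha s$ for $s\ge 0$ where $\mcl B$ is a standard Brownian motion, i.e.\ $B$ has stationary increments with downward drift $-a = \alpha - Q$ when we renormalize correctly (the relevant drift governing how $\nu_h$-mass of a boundary interval behaves is $a = Q-\alpha$, matching~\eqref{eq:a-def}). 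The first step is to set up the change of variables $\ol x_{\delta,L}$ versus $\delta$: for a fixed small radius, $\nu_h([-e^{-s},0])$ is, up to a random multiplicative factor with good moments, comparable to $e^{-\gamma s B_s/?}$-type quantities; more precisely I would use the standard fact (from~\cite{shef-kpz}, cf.\ the arguments of~\cite[Section 4]{shef-kpz}) that $\nu_h([-e^{-s},0]) \asymp e^{(\gamma/2) B_s - ?} \cdot (\text{fluctuation})$, so that the condition $\nu_h([-x_{\delta,L},0]) = \delta$ translates into a first-passage condition for a drifted Brownian motion: $x_{\delta,L}\le r$ iff the process $\gamma B_s/2 + (\text{const})s$ hits level $\log\delta$ before $s$ reaches $\log(1/r)$.

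Concretely, the main computation reduces to the following: let $X_s$ be a Brownian motion with drift such that $\P[\inf_{s\le \infty} \text{(relevant process)} \le \log\delta] $ and more importantly the Laplace-transform-type identity for $\lambda\ge 0$,
\eqbn
\E\!\left[\ol x_{\delta,L}^{\lambda}\right] = \E\!\left[e^{-\lambda S_{\delta,L}}\wedge r^\lambda\right],
\eqen
where $e^{-S_{\delta,L}} = x_{\delta,L}$ and $S_{\delta,L}$ is (to leading exponential order) the first time a Brownian motion with variance rate $\gamma^2$ and drift $\gamma a$ — wait, let me phrase it cleanly — exceeds $\log(1/\delta)$. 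The exponent $\frac{a - \sqrt{a^2 + 4(\lambda_1+\lambda_2)}}{\gamma}$ has exactly the shape of an optimal-tilting / Legendre-transform exponent: if $\tau_\delta$ denotes the hitting time of level $\log(1/\delta)$ by a Brownian motion of unit variance and drift $\mu>0$, then $\E[e^{-\lambda\tau_\delta}] = e^{-(\sqrt{\mu^2+2\lambda}-\mu)\log(1/\delta)}$, and after rescaling time and space by the appropriate powers of $\gamma$ and identifying $\mu$ with $a$, one gets precisely the claimed exponent. So the plan is: (i) express $x_{\delta,L}$ (and $x_{\delta,R}$) exactly in terms of a first-passage time of the semicircle-average process plus a multiplicative error term; (ii) show the error term contributes nothing to the exponent (it has all moments, uniformly in $\delta$, using the circle-average / Kahane-type moment bounds for $\nu_h$ as in~\cite{shef-kpz}); (iii) combine the left and right sides — here is where the factor $\lambda_1+\lambda_2$ appears together rather than separately, because the left and right boundary intervals $[-x_{\delta,L},0]$ and $[0,x_{\delta,R}]$ are measured against the \emph{same} semicircle-average process $B_s$ (the $\mcl H^0$-part of $h$ is shared), so conditionally on $B$, once we know $B$ crosses the level we get both passages simultaneously; (iv) apply the classical hitting-time Laplace transform of drifted Brownian motion and read off the exponent.

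The main obstacle I expect is step (ii) together with making step (i) precise: $\nu_h$ of a small boundary interval is not literally a deterministic function of $B_s$ — there is genuine randomness from the lateral part $h^\dagger$ and from the fluctuations of $h^0$ around its endpoint value — so one has to control both the upper and lower tails of the ratio $\nu_h([-e^{-s},0]) / e^{(\gamma/2)B_s}$ uniformly. This is exactly the kind of estimate carried out in~\cite[Section 4]{shef-kpz}; I would quote or adapt those moment bounds. A secondary subtlety is the truncation at $r$: because we cap at $\ol x_{\delta,L} = x_{\delta,L}\wedge r$, we only need the behaviour of the passage for times $s\in[0,\log(1/r)]$ versus $s > \log(1/r)$, and the event $\{x_{\delta,L} > r\}$ contributes $r^{\lambda_1}\P[x_{\delta,L}>r]$, which decays like a fixed power of $\delta$-times-constant and can be shown to be of the same or smaller exponential order — careful bookkeeping is needed so that the $\limsup$ and $\liminf$ of $\log\E[\cdots]/\log\delta^{-1}$ both equal the claimed value, but no new idea is required. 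Finally I'd note that positivity of $\lambda_1,\lambda_2$ is used to ensure monotonicity (so the truncation only helps) and that $\alpha < Q$, i.e.\ $a>0$, is exactly what makes the square root $\sqrt{a^2 + 4(\lambda_1+\lambda_2)}$ the relevant branch and guarantees the first-passage time is a.s.\ finite with the right tail.
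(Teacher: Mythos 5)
Your proposal follows essentially the same route as the paper: the exponent is extracted from the Laplace transform of the first-passage time of the drifted semicircle-average process $V_t=\mcl B_{2t}+at$ to level $\tfrac{2}{\gamma}\log\delta^{-1}$ (via the exponential martingale and optional stopping), the multiplicative error between $\nu_h$ of a small boundary interval and $e^{-\gamma V_\tau/2}$ is controlled by the negative-moment/Gaussian multiplicative chaos bounds from \cite[Section~4]{shef-kpz}, and the joint moment collapses to a single moment in $\lambda_1+\lambda_2$ because both boundary lengths are governed by the same radial part — which the paper makes rigorous by the elementary sandwich $\E[\ol x_\delta^{\lambda_1+\lambda_2}]\le\E[\ol x_{\delta,L}^{\lambda_1}\ol x_{\delta,R}^{\lambda_2}]\le 2^{\lambda_1+\lambda_2+1}\E[\ol x_{\delta,L}^{\lambda_1+\lambda_2}]$. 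The plan is correct and identifies the right technical obstacles; it matches the paper's argument in all essentials.
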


We will deduce Proposition~\ref{prop:free-boundary-length} from two similar propositions which concern moments of only a single random variable (rather than joint moments) and imply the upper and lower bounds in Proposition~\ref{prop:free-boundary-length}, respectively. 

\begin{proposition}
\label{prop:shifted-KPZ}
Suppose we are in the setting of Proposition~\ref{prop:free-boundary-length}. For each $ \lambda>0$,
\begin{align}
\label{eq:shifted-KPZ}
\lim_{\delta\rta 0 } \frac{\log \E\!\left[\ol x_{\delta,L}^{ \lambda  }\right]}{\log\delta^{-1}}
= \frac{a -  \sqrt{a^2+4\lambda} }{\gamma},
\end{align} 
with $a$ as in~\eqref{eq:a-def}.
\end{proposition}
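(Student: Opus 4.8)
The plan is to identify $\E[\ol x_{\delta,L}^\lambda]$ with the Laplace transform of a first-passage time of a Brownian motion with drift and to read off the exponent from the classical formula for such transforms. Write $A_s := \nu_h([-e^{-s},0])$ for $s\ge 0$. Since $\nu_h$ is a.s.\ non-atomic with full support on $\BB R$, the map $s\mapsto A_s$ is a.s.\ continuous and strictly decreasing, with $A_0\in(0,\infty)$ and $A_s\to 0$; hence, setting $S_\delta := \inf\{s\ge 0 : A_s\le\delta\}$, we have $x_{\delta,L} = e^{-S_\delta}$ and $\{S_\delta\le t\} = \{A_t\le\delta\}$ exactly. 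Writing $T := \log(1/r)\ge 0$, this gives $\ol x_{\delta,L} = e^{-(S_\delta\vee T)}$ and so $\E[\ol x_{\delta,L}^\lambda] = \E[e^{-\lambda(S_\delta\vee T)}]$, so it suffices to find the logarithmic asymptotics of this quantity as $\delta\to 0$.

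Next I would expand $A_s$ over dyadic scales. By the LQG coordinate change under $z\mapsto e^{-s}z$, the scale invariance of the part of the free-boundary GFF orthogonal to the semicircle averages around $0$, and the identity $h = h^F - \alpha\log|\cdot|$, one can write $A_s = e^{\frac{\gamma}{2}(B_s - a s)}\,\Theta_s$, where $a = Q-\alpha>0$ is as in~\eqref{eq:a-def}, where $B$ is the semicircle-average process of $h^F$ around $0$, a two-sided Brownian motion with $B_0 = 0$ and variance rate $2$ that is independent of the ``transverse'' randomness, and where $\Theta_s$ is a remaining factor assembled from the $\gamma$-quantum boundary lengths of the annular intervals $[-e^{-k},-e^{-(k+1)}]$ with $k\ge s$, each rescaled to unit size, together with the local Brownian increments that enter the rescaling. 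The two inputs I would record about $\Theta_s$ are: (i) for each fixed $s$, $\Theta_s\eqD\Theta$ for one fixed a.s.\ finite and strictly positive random variable $\Theta$, with a lower-tail bound $\P[\Theta<\eps]\le e^{-c(\log(1/\eps))^{1+\zeta}}$ for some $c,\zeta>0$ --- this uses that the intervals $[-e^{-k},-e^{-(k+1)}]$ stay a definite distance from the singularity at $0$, so the relevant quantum lengths have all negative moments with Gaussian-type control, and it is the place where real work (in the spirit of \cite[Section~4]{shef-kpz}) is needed; and (ii) $\P[\Theta>K]\to 0$ as $K\to\infty$, only $\Theta<\infty$ a.s.\ being used on this side. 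Heuristically, $\log A_s = \tfrac{\gamma}{2}(B_s - a s) + o(\log(1/\delta))$ at the scales that matter, so $S_\delta$ is essentially the first passage of the drifting Brownian motion $\tfrac{\gamma}{2}(a s - B_s)$ (variance rate $\gamma^2/2$, drift $\gamma a/2$) above level $u := \log(1/\delta)$, and the classical Laplace transform formula for such a passage time gives $\E[e^{-\lambda S_\delta}]\approx \exp\!\big(-u\,\tfrac{\sqrt{a^2+4\lambda}-a}{\gamma}\big)$; the content of the proof is to justify this up to $o(u)$ in the exponent.

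Concretely, for the upper bound I would use $\E[e^{-\lambda(S_\delta\vee T)}]\le \lambda\int_0^\infty e^{-\lambda t}\P[A_t\le\delta]\,dt$, note that on $\{\Theta_t\ge e^{-M}\}$ the event $\{A_t\le\delta\}$ forces $B_t - a t\le -\tfrac{2}{\gamma}u + \tfrac{2}{\gamma}M$, take $M = u^{\beta}$ with $\beta<1$ chosen close enough to $1$, and combine the Gaussian bound $\P[B_t - a t\le -x]\le e^{-x^2/(4t)}$ with Laplace's method and the elementary minimization $\min_{t>0}\big[\lambda t + \tfrac{(2u/\gamma - a t)^2}{4t}\big] = \tfrac{u}{\gamma}(\sqrt{a^2+4\lambda}-a)$; the contribution of $\{\Theta_t<e^{-M}\}$ is at most $\P[\Theta<e^{-M}]$, which by (i) is superpolynomially small in $\delta$ and, for $\beta$ close to $1$, negligible next to the main term. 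For the lower bound I would take $t^{\ast}_\delta := \tfrac{2u}{\gamma\sqrt{a^2+4\lambda}}$ (the minimizer above), assume $\delta$ small enough that $t^{\ast}_\delta\ge T$, and bound $\E[e^{-\lambda(S_\delta\vee T)}]\ge e^{-\lambda t^{\ast}_\delta}\P[A_{t^{\ast}_\delta}\le\delta]$; conditioning on everything except $B_{t^{\ast}_\delta}$, which is independent of that $\sigma$-algebra and of $\Theta_{t^{\ast}_\delta}$, one has on the positive-probability event $\{\Theta_{t^{\ast}_\delta}\le K\}$ the bound $\P[A_{t^{\ast}_\delta}\le\delta\mid\cdot]\ge \P\big[B_{t^{\ast}_\delta}\le a t^{\ast}_\delta - \tfrac{2}{\gamma}(u+\log K)\big]\ge e^{-(2u/\gamma - a t^{\ast}_\delta)^2/(4t^{\ast}_\delta) + o(u)}$ by the Gaussian lower-tail estimate (the target is negative since $a t^{\ast}_\delta<\tfrac{2}{\gamma}u$). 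Putting the two sides together, $\E[e^{-\lambda(S_\delta\vee T)}] = e^{-\frac{u}{\gamma}(\sqrt{a^2+4\lambda}-a)+o(u)}$, and dividing $\log$ of this by $\log\delta^{-1}=u$ and letting $\delta\to 0$ yields the claimed limit $\tfrac{a-\sqrt{a^2+4\lambda}}{\gamma}$.

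The main obstacle is input (i): setting up the dyadic decomposition rigorously and, above all, proving the lower-tail bound $\P[\Theta<\eps]\le e^{-c(\log(1/\eps))^{1+\zeta}}$ --- equivalently, strong enough negative-moment control for the $\gamma$-quantum boundary length of a fixed interval carrying a subcritical log singularity --- with a rate strong enough to beat $e^{-c'\log(1/\delta)}$ at the scales appearing in the argument; a merely superpolynomial bound does not obviously suffice, and this is exactly what the next two subsections are designed to provide. A secondary point, routine but requiring care, is to keep track of which Brownian increments feed into $\Theta_s$ versus into $B_s$ in the decomposition, and to confirm $\Theta<\infty$ a.s.\ and $A_s\to 0$ --- both of which rely on the hypothesis $\alpha<Q$, i.e.\ $a>0$, which simultaneously makes the drift point the correct way and keeps the singularity subcritical.
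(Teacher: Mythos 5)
Your overall architecture is sound and close in spirit to the paper's: both arguments reduce the moment to a first-passage problem for the drifted Brownian motion given by the semicircle averages, and both must control the multiplicative ``remainder'' $\Theta_s$ (the rescaled quantum length, which is what the paper calls $\nu_{\wt h_*}([-1,0])$ in Lemma~\ref{lem:nu_h-tail}). Your lower bound, via a Gaussian lower-tail estimate at the single deterministic time $t^*_\delta$ and the trivial fact $\P[\Theta\le K]>0$, is correct and if anything more direct than the paper's route through Proposition~\ref{prop:boundary-KPZ} and Lemma~\ref{lem:kpz-equiv}. The genuine gap is in your upper bound, and you have diagnosed it yourself: with the correction level $M=u^\beta$, $\beta<1$, the bad event contributes $\P[\Theta<e^{-u^\beta}]$, and to make this $o(e^{-Cu})$ for every $C$ you need the lower tail of the quantum boundary length to decay like $\exp(-c(\log(1/\eps))^{1+\zeta})$. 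That estimate is \emph{not} what the subsequent subsections provide: Lemma~\ref{lem:free-gff-moments} (via the cited Gaussian multiplicative chaos results) only gives finiteness of all negative moments, i.e.\ $\P[\Theta<\eps]=o_\eps^\infty(\eps)$, which, as you note, does not suffice at your scales. So as written the proof rests on an input that is neither proved in your proposal nor supplied by the paper's toolbox.

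The repair is a small but essential change of parameters, and it is exactly what the paper does. Instead of cutting at $\Theta\ge e^{-u^\beta}$, fix $s\in(0,1)$ and cut at $\Theta\ge \delta^{1-s}$ (equivalently, compare $\ol x_{\delta,L}$ with the first passage of $V$ to the level $s A_\delta$, as in the proof in Section~\ref{sec:moment-proof} via the stopping time $\tau_{\delta^s}$ and Lemma~\ref{lem:nu_h-tail}). Then the bad event requires $\Theta\le\delta^{1-s}$ with $1-s$ a fixed positive constant, for which the merely superpolynomial bound $\P[\Theta\le\delta^{1-s}]\le C_p\delta^{p(1-s)}$ is ample; the price is that the main term acquires the exponent $s\cdot\frac{\sqrt{a^2+4\lambda}-a}{\gamma}$, which is harmless because $s$ is arbitrary and one sends $s\to1$ at the end (the limit in~\eqref{eq:shifted-KPZ} is then squeezed). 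With $M=(1-s)u$ your Laplace-method computation goes through verbatim (you should also record that the range $t>2u/(a\gamma)$, where the Gaussian bound degenerates, is killed by $e^{-\lambda t}$ alone since $2\lambda/a\ge\sqrt{a^2+4\lambda}-a$). With that substitution your argument is a correct, fixed-time reformulation of the paper's stopping-time/optional-stopping proof.
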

 
\begin{proposition}
\label{prop:boundary-KPZ}
Let $h$ be an unscaled $\alpha$-quantum wedge as above. For $\delta>0$, let $x_\delta$ be such that $\nu_h([-x_\delta , x_\delta])=\delta$. Also fix $r>0$ and let $\ol x_\delta := x_\delta \wedge r$. Then  
\begin{align*}
\lim_{\delta\rta  0 } \frac{\log \E\!\left[\ol x_{\delta  }^\lambda \right]}{\log\delta^{-1}  }
= \frac{a -  \sqrt{a^2+4\lambda} }{\gamma},
\end{align*}  
with $a$ as in~\eqref{eq:a-def}.
\end{proposition}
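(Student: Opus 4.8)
The plan is to turn the statement into a first-passage computation for a Brownian motion with drift, coming from the semicircle-average process of the field. Write $h = h^F - \alpha\log|\cdot|$ and let $A_t := (h^F)^0(e^{-t})$ be the semicircle average of $h^F$ over $\bdy B_{e^{-t}}(0)\cap\BB H$; by Remark~\ref{rmk:wedge-free}, $A$ is a two-sided Brownian motion with $A_0 = 0$ and $\Var(A_t) = 2|t|$. Applying the coordinate change~\eqref{eq:lqg-coord} under the scaling $z\mapsto e^{-t}z$ gives, for every $t\in\BB R$,
\[
\nu_h([-e^{-t},e^{-t}]) = e^{\frac{\gamma}{2}(A_t - a t)}\,V_t , \qquad V_t := \nu_{\check h_t}([-1,1]),
\]
where $\check h_t := h^F(e^{-t}\cdot) - A_t - \alpha\log|\cdot|$. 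The point is that $V_t$ has the same law as $V_0 = \nu_h([-1,1])$ for every $t$ (scale invariance of $h^F$), and, since $\check h_t$ restricted to $B_1(0)\cap\BB H$ is a measurable function of $h^{F,\dagger}$ (the component of $h^F$ orthogonal to radial functions) and of the increments $\{A_s - A_t : s\ge t\}$, it is \emph{independent} of $A_t$. Since $t\mapsto\nu_h([-e^{-t},e^{-t}])$ is a.s.\ continuous and decreasing with $\nu_h([-e^{-t},e^{-t}])\to\infty$ as $t\to-\infty$ and $\to 0$ as $t\to\infty$, the time $\tau(\delta):=\inf\{t : \nu_h([-e^{-t},e^{-t}])\le\delta\}$ is well defined, $x_\delta = e^{-\tau(\delta)}$, and with $t_0 := -\log r$ we have $\bar x_\delta^\lambda = e^{-\lambda(\tau(\delta)\vee t_0)}$. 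Thus it suffices to show $\E[e^{-\lambda\tau(\delta)}] = \delta^{\theta(\lambda) + o_\delta(1)}$ for $\theta(\lambda) := \frac{\sqrt{a^2 + 4\lambda} - a}{\gamma}$, the remaining contribution $r^\lambda\,\P[\tau(\delta)\le t_0] = r^\lambda\,\P[\nu_h([-r,r])\le\delta]$ being negligible.

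Set $c(\beta) := \frac{\gamma^2\beta^2}{4} + \frac{\gamma a\beta}{2}$, so that $c(\theta(\lambda)) = \lambda$. The independence of $A_t$ and $V_t$ gives the exact identity $\E[\nu_h([-e^{-t},e^{-t}])^{-\beta}] = e^{c(\beta)t}\,\E[V_0^{-\beta}]$ for every $\beta > 0$, and the key analytic input is that $\E[V_0^{-\beta}] < \infty$ for all $\beta > 0$ — the standard super-polynomial lower-tail bound for the boundary Gaussian multiplicative chaos, which persists under the insertion $-\alpha\log|\cdot|$ since that only adds mass (e.g.\ $\nu_h([-1,1])\ge 2^{-\gamma\alpha/2}\nu_{h^F}([1/2,1])$); this is where $\alpha < Q$ is used. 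For the upper bound, Markov's inequality gives $\P[\tau(\delta) < T] \le \delta^\beta e^{c(\beta)T}\E[V_0^{-\beta}]$, and integrating $\E[e^{-\lambda\tau(\delta)}] = \lambda\int_0^\infty e^{-\lambda T}\P[\tau(\delta) < T]\,dT$ against $\min(1, C\delta^\beta e^{c(\beta)T})$ yields $\E[e^{-\lambda\tau(\delta)}]\le C_\beta\,\delta^{\beta\lambda/c(\beta)}$ for any $\beta$ with $c(\beta) > \lambda$; letting $\beta\downarrow\theta(\lambda)$ gives exponent $\theta(\lambda)$. The capped term is handled by the same inequality with $r$ in place of $e^{-T}$ and $\beta$ chosen arbitrarily large.

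For the matching lower bound, fix $s\in(0,\tfrac{2}{a\gamma})$ and put $T := s\log(1/\delta)$. Using $A_T\perp V_T$ and choosing a constant $c_0$ with $q_0 := \P[V_0\le e^{c_0}] > 0$,
\[
\P[\tau(\delta)\le T] \ge \P\!\left[\tfrac{\gamma}{2}(A_T - aT) + \log V_T\le\log\delta\right]\ge q_0\,\P\!\left[A_T\le aT + \tfrac{2}{\gamma}(\log\delta - c_0)\right] = \delta^{I(s) + o_\delta(1)},
\]
where $I(s) := \frac{(2/\gamma - as)^2}{4s}$ by the Gaussian lower-deviation estimate. Hence $\E[e^{-\lambda\tau(\delta)}]\ge e^{-\lambda T}\P[\tau(\delta)\le T] = \delta^{\lambda s + I(s) + o_\delta(1)}$, and a one-variable minimization gives $\min_s(\lambda s + I(s)) = \theta(\lambda)$, attained at $s = \frac{2}{\gamma\sqrt{a^2 + 4\lambda}}$. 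Combining the two bounds (and adding back the negligible capped term) gives $\E[\bar x_\delta^\lambda] = \delta^{\theta(\lambda) + o_\delta(1)}$, equivalently the asserted limit. The main obstacle is the negative-moment bound $\E[V_0^{-\beta}] < \infty$, i.e.\ the lower-tail estimate for the boundary GMC with a logarithmic singularity (together with a small positive moment of $V_0$, needed to ensure $\tau(\delta)$ is well defined); the rest is the scaling identity, the independence $V_t\perp A_t$, Gaussian estimates, and the optimization.
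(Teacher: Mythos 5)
Your proof is correct, and while it is built from the same raw ingredients as the paper's argument (the decomposition of $\nu_h([-e^{-t},e^{-t}])$ into $e^{\frac{\gamma}{2}(\mathcal B_{2t}-at)}$ times an independent copy of $\nu_h([-1,1])$, plus the finiteness of negative moments of the boundary GMC measure as in Lemma~\ref{lem:free-gff-moments}), the mechanism you use to extract the exponent is genuinely different. The paper never works with $x_\delta$ directly: it introduces the estimator $\wh x_\delta=e^{-\tau_\delta}$, where $\tau_\delta$ is the first-passage time of the drifted radial process to level $\frac{2}{\gamma}\log\delta^{-1}$, computes $\E[e^{-\lambda\tau_\delta}]$ \emph{exactly} by optional stopping of an exponential martingale (Lemma~\ref{lem:circle-KPZ}), and then transfers to $\ol x_\delta$ — for the lower bound by showing $\P[\wh x_\delta\le\ol x_\delta\mid\mcl F_{\tau_\delta}]\ge c$ via the strong Markov property at $\tau_\delta$, and for the upper bound (which it actually obtains from Proposition~\ref{prop:shifted-KPZ} via Lemma~\ref{lem:kpz-equiv}) by showing the error event has superpolynomially small probability (Lemma~\ref{lem:nu_h-tail}). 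You instead work with $\tau(\delta)$ defined directly from the quantum length, so no estimator or transfer step is needed, and you use only \emph{fixed} times: Chebyshev with negative moments at each scale for the upper bound, and a Gaussian lower-deviation estimate at the optimal scale $T=s^*\log\delta^{-1}$ followed by a one-variable minimization for the lower bound. What your route buys is the avoidance of stopping times, the strong Markov property, and optional stopping entirely, and it yields both bounds of the proposition in one self-contained argument; what the paper's route buys is an exact (no $o_\delta(1)$) formula for the Laplace transform of the first-passage time and estimates (Lemma~\ref{lem:nu_h-tail}) that are reused elsewhere, e.g.\ in the proof of Proposition~\ref{prop:shifted-KPZ}. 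Two small points to tidy up: the finiteness of $V_0=\nu_h([-1,1])$ and the positivity of $\P[V_0\le e^{c_0}]$ for the lower bound deserve a word (this is where $\alpha<Q$ enters, rather than in the negative-moment bound, where what matters is $\alpha\ge 0$); and the identity $\E[e^{-\lambda\tau(\delta)}]=\lambda\int_0^\infty e^{-\lambda T}\P[\tau(\delta)<T]\,dT$ should be restricted to the event $\{\tau(\delta)>t_0\}$ since $\tau(\delta)$ need not be nonnegative — but as you note, the complementary event has probability $o_\delta^\infty(\delta)$, so this is only bookkeeping.
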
 

The following lemma tells us that in order to prove Propositions~\ref{prop:free-boundary-length},~\ref{prop:shifted-KPZ}, and~\ref{prop:boundary-KPZ}, we need only prove the upper bound for the limit in Proposition~\ref{prop:shifted-KPZ} and a lower bound for the limit in Proposition~\ref{prop:boundary-KPZ}.

\begin{lemma} \label{lem:kpz-equiv}
Let $\ol x_{\delta, L} , \ol x_{\delta,R}$ be defined as in the beginning of this subsection and let $\ol x_\delta$ be as in Proposition~\ref{prop:boundary-KPZ} (with the same choice of $r$). For each $\lambda_1 , \lambda_2 > 0$, we have
\eqb \label{eq:kpz-equiv}
 \lim_{\delta\rta 0} \frac{\log \E\!\left[\ol x_\delta^{\lambda_1+\lambda_2}\right]}{\log\delta^{-1} }  \leq  \lim_{\delta \rta 0} \frac{ \log \E\!\left[\ol x_{\delta, L}^{\lambda_1} \ol x_{\delta, R}^{\lambda_2} \right]}{\log \delta^{-1}} \leq \lim_{\delta \rta 0 } \frac{\log \E\!\left[\ol x_{\delta, L}^{ \lambda_1 + \lambda_2  }\right]}{\log\delta^{-1}} .
\eqe 
\end{lemma}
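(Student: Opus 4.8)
The key observation is that the random variables $\ol x_{\delta,L}$, $\ol x_{\delta,R}$, and $\ol x_\delta$ are all built from the same free-boundary GFF $h$ on $\BB H$ and the associated boundary measure $\nu_h$, and they satisfy elementary pointwise comparisons which immediately transfer to comparisons of moments. I will prove the two inequalities in~\eqref{eq:kpz-equiv} separately, in each case by a pointwise domination of the relevant random variable.

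\emph{Upper inequality.} Fix $\lambda_1,\lambda_2 > 0$. The point is that if $\nu_h([-x_{\delta,L},0]) = \delta$ and $\nu_h([0,x_{\delta,R}]) = \delta$, then $\nu_h([-x_{\delta,L} , x_{\delta,R}]) = 2\delta$, so $x_{2\delta} \geq x_{\delta,L} \wedge x_{\delta,R}$ where $x_{2\delta}$ is as in Proposition~\ref{prop:boundary-KPZ} at level $2\delta$; equivalently $\ol x_{2\delta} \geq \ol x_{\delta,L} \wedge \ol x_{\delta,R}$. Hence, writing $p = \lambda_1 + \lambda_2$, I get $\ol x_{2\delta}^p \geq (\ol x_{\delta,L}\wedge \ol x_{\delta,R})^p \geq \ol x_{\delta,L}^{\lambda_1}\ol x_{\delta,R}^{\lambda_2} \cdot \big(\tfrac{\ol x_{\delta,L}\wedge \ol x_{\delta,R}}{\ol x_{\delta,L}}\big)^{\lambda_1}\big(\tfrac{\ol x_{\delta,L}\wedge \ol x_{\delta,R}}{\ol x_{\delta,R}}\big)^{\lambda_2}$; each of the last two factors lies in $[0,1]$, so $\ol x_{\delta,L}^{\lambda_1}\ol x_{\delta,R}^{\lambda_2} \leq \ol x_{2\delta}^{\lambda_1+\lambda_2}$. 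Taking expectations and then $\log(\cdot)/\log\delta^{-1}$ and sending $\delta\to 0$, and using that $\log(2\delta)^{-1}/\log\delta^{-1} \to 1$ together with the fact (from Proposition~\ref{prop:boundary-KPZ}, whose conclusion for the $2\delta$-indexed family is the same limit) that $\E[\ol x_{2\delta}^{\lambda_1+\lambda_2}]$ has the same exponent as $\E[\ol x_\delta^{\lambda_1+\lambda_2}]$, gives the left inequality in~\eqref{eq:kpz-equiv}. (Alternatively one bypasses Proposition~\ref{prop:boundary-KPZ} entirely here and states the bound directly in terms of $\ol x_{2\delta}$, but since the limits in question are all assumed to exist, the clean statement above is what is needed downstream.)

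\emph{Lower inequality.} Again fix $\lambda_1,\lambda_2 > 0$. Since $\nu_h$ has no atoms, $\nu_h([-\ol x_{\delta,L},0]) \leq \delta$ and $\nu_h([0,\ol x_{\delta,R}])\leq \delta$, hence $\nu_h([-\ol x_{\delta,L},\ol x_{\delta,R}]) \leq 2\delta$, which forces $\ol x_{\delta,L}\wedge \ol x_{\delta,R} \leq \ol x_{2\delta}$ — no, more to the point: $\ol x_{\delta,L}\vee \ol x_{\delta,R}$ is controlled by thinking of the one-sided problem. The simplest route is: $\ol x_{\delta,L} \leq x_{\delta,L}$ always, and $x_{\delta,L}$ has, by symmetry of the free-boundary GFF under reflection $z\mapsto -\bar z$, the same law as the corresponding right-side quantity; moreover $\nu_h([-x_{\delta,L},0]) = \delta \geq \tfrac12\nu_h([-x_{\delta,L},x_{\delta,L}'])$ for $x'_{\delta,L}$ the reflection — this shows $\ol x_{\delta,L} \geq \ol x_{\delta/2}'$ for a reflected copy of $\ol x_{\delta/2}$, and by Hölder $\E[\ol x_{\delta,L}^{\lambda_1}\ol x_{\delta,R}^{\lambda_2}] \geq \E[\ol x_{\delta,L}^{\lambda_1+\lambda_2}]^{\lambda_1/(\lambda_1+\lambda_2)}\E[\ol x_{\delta,R}^{\lambda_1+\lambda_2}]^{\lambda_2/(\lambda_1+\lambda_2)}$ is false in the wrong direction; instead use $\ol x_{\delta,L}^{\lambda_1}\ol x_{\delta,R}^{\lambda_2} \geq (\ol x_{\delta,L}\wedge \ol x_{\delta,R})^{\lambda_1+\lambda_2}$ and then bound $\ol x_{\delta,L}\wedge \ol x_{\delta,R}$ below by $\ol x_{\delta}$ (both sides of the interval $[-x_\delta,x_\delta]$ carrying measure $\leq \delta$ forces $x_\delta \leq x_{\delta,L}$ and $x_\delta \leq x_{\delta,R}$, hence $\ol x_\delta \leq \ol x_{\delta,L}\wedge \ol x_{\delta,R}$). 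Therefore $\E[\ol x_{\delta,L}^{\lambda_1}\ol x_{\delta,R}^{\lambda_2}] \geq \E[\ol x_\delta^{\lambda_1+\lambda_2}]$, and passing to the log-limit yields the left-hand inequality of~\eqref{eq:kpz-equiv}. Symmetrically, for the right-hand inequality: $\ol x_\delta \geq ?$ — here use instead that $\ol x_{\delta,L}\vee \ol x_{\delta,R}$ is the relevant object but the cleanest bound is $\ol x_{\delta,L}^{\lambda_1}\ol x_{\delta,R}^{\lambda_2} \leq \max(\ol x_{\delta,L},\ol x_{\delta,R})^{\lambda_1+\lambda_2} \leq \ol x_{\delta,L}^{\lambda_1+\lambda_2} + \ol x_{\delta,R}^{\lambda_1+\lambda_2}$; taking expectations and using that $\ol x_{\delta,L}$ and $\ol x_{\delta,R}$ have the same law (reflection symmetry of the free-boundary GFF), $\E[\ol x_{\delta,L}^{\lambda_1}\ol x_{\delta,R}^{\lambda_2}] \leq 2\E[\ol x_{\delta,L}^{\lambda_1+\lambda_2}]$, and the log-limit of the right side is the claimed upper bound $\lim_{\delta\to0}\log\E[\ol x_{\delta,L}^{\lambda_1+\lambda_2}]/\log\delta^{-1}$.

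\emph{Main obstacle.} None of the pointwise comparisons is hard; the only real care needed is the bookkeeping of the factors of $2$ in $\delta$ versus $2\delta$ (and $\delta/2$) and verifying that replacing $\delta$ by a constant multiple of $\delta$ does not change the exponent — which is automatic since $\log(c\delta)^{-1}/\log\delta^{-1}\to 1$ as $\delta\to 0$ — together with the tacit point that all the limits in~\eqref{eq:kpz-equiv} are already known to exist (for $\ol x_\delta$ and $\ol x_{\delta,L}$ this is the content of Propositions~\ref{prop:boundary-KPZ} and~\ref{prop:shifted-KPZ}, proved separately; the existence of the middle limit is then forced by the sandwich once both outer limits are shown equal, or can be extracted a posteriori). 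I would also remark that $\ol x_{\delta,L}$ and $\ol x_{\delta,R}$ are identically distributed because the free-boundary GFF on $\BB H$ with the stated normalization is invariant in law under $z\mapsto -\bar z$, and $|\cdot|$ is invariant under this map, so $h = h^F - \alpha\log|\cdot|$ is as well; this is what lets me replace $\E[\ol x_{\delta,R}^{\lambda_1+\lambda_2}]$ by $\E[\ol x_{\delta,L}^{\lambda_1+\lambda_2}]$ in the last display.
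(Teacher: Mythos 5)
Your proof, once one extracts the arguments you finally settle on, is correct and is essentially the paper's own: the left inequality follows from the pointwise bound $\ol x_\delta \leq \ol x_{\delta,L}\wedge \ol x_{\delta,R}$ (hence $\ol x_\delta^{\lambda_1+\lambda_2}\leq \ol x_{\delta,L}^{\lambda_1}\ol x_{\delta,R}^{\lambda_2}$), and the right inequality from $\ol x_{\delta,L}^{\lambda_1}\ol x_{\delta,R}^{\lambda_2}\leq \ol x_{\delta,L}^{\lambda_1+\lambda_2}+\ol x_{\delta,R}^{\lambda_1+\lambda_2}$ together with the equality in law of $\ol x_{\delta,L}$ and $\ol x_{\delta,R}$ (the paper routes through $(\ol x_{\delta,L}+\ol x_{\delta,R})^{\lambda_1+\lambda_2}\leq 2^{\lambda_1+\lambda_2}(\ol x_{\delta,L}^{\lambda_1+\lambda_2}+\ol x_{\delta,R}^{\lambda_1+\lambda_2})$; your version via the maximum is equivalent). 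Your explicit justification of the reflection symmetry is a reasonable addition.

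However, your first paragraph (``Upper inequality'') is wrong and must be deleted. From $\ol x_{2\delta}^{p}\geq (\ol x_{\delta,L}\wedge\ol x_{\delta,R})^{p}\geq \ol x_{\delta,L}^{\lambda_1}\ol x_{\delta,R}^{\lambda_2}\cdot c$ with $c\in[0,1]$ you cannot conclude $\ol x_{\delta,L}^{\lambda_1}\ol x_{\delta,R}^{\lambda_2}\leq \ol x_{2\delta}^{\lambda_1+\lambda_2}$ --- the factor $c\leq 1$ points the wrong way --- and indeed the claimed inequality fails whenever the two sides are very asymmetric (e.g.\ $\ol x_{\delta,L}$ of order $1$, $\ol x_{\delta,R}$ and $\ol x_{2\delta}$ both tiny, $\lambda_1$ large). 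Moreover, even if the bound held, it would give $\E[\ol x_{\delta,L}^{\lambda_1}\ol x_{\delta,R}^{\lambda_2}]\leq \E[\ol x_{2\delta}^{\lambda_1+\lambda_2}]$, i.e.\ an upper bound on the middle limit by the leftmost one --- the reverse of the left inequality in~\eqref{eq:kpz-equiv}, not a proof of it. Since you later prove the left inequality correctly via $\ol x_\delta\leq \ol x_{\delta,L}\wedge\ol x_{\delta,R}$, the paragraph is superfluous as well as incorrect; the several abandoned false starts in the ``Lower inequality'' paragraph should likewise be removed so that only the two clean pointwise bounds remain.
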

\begin{proof}
By definition, we have $\ol x_{\delta } \leq \ol x_{\delta, L} \wedge \ol x_{\delta, R}$, which gives the first inequality in~\eqref{eq:kpz-equiv}.   
The second inequality follows from
\alb
\E\!\left[\ol x_{\delta, L}^{\lambda_1} \ol x_{\delta, R}^{\lambda_2}\right] 
&\leq \E\!\left[(\ol x_{\delta, L}+ \ol x_{\delta, R} )^{\lambda_1+\lambda_2}\right]\\
&\leq 2^{\lambda_1+\lambda_2}\E\!\left[\ol x_{\delta, L}^{\lambda_1+\lambda_2}+\ol x_{\delta, R}^{\lambda_1+\lambda_2}\right] \\
&= 2^{\lambda_1+\lambda_2+1}\E\!\left[\ol x_{\delta, L}^{\lambda_1+\lambda_2}\right]. \qedhere
\ale
\end{proof}

The proofs of the lower bound in Proposition~\ref{prop:shifted-KPZ} and the upper bound in Proposition~\ref{prop:boundary-KPZ} will be completed in the next two subsections. Both proofs use arguments similar to those found in~\cite[Section~4]{shef-kpz}. In particular, both estimates are established by first proving the semicircle average version of the estimate and then showing that the exponential of $\gamma$ times the semicircle average is in some sense a good approximation for the quantum measure.

\subsection{Circle average KPZ and tail estimates}
\label{sec:circle-avg}

In this subsection we will establish several lemmas which are similar to various results in~\cite[Section~4]{shef-kpz} and which are needed for the proofs of the results in Section~\ref{sec:moments}. Throughout this subsection and the next, we assume we are in the setting of Section~\ref{sec:moments}, and we use the notation introduced there plus  the following additional notation. For $\ep > 0$, let $h_\ep(z)$ be the semicircle average of $h$ about $\partial B_\ep(z) \cap \BB H$.
For $t\in \BB R$, let 
\eqb \label{eq:bm-def}
 V_t := -h_{e^{-t}} (0)+Q t, 
\eqe 
with $Q$ as in~\eqref{eq:Q-def}.
As explained in~\cite[Section~6.1]{shef-kpz}, $V_t$ is distributed as $\mcl B_{2t}+at$ where $\mcl B $ is a standard linear two-sided Brownian motion and $a$ is as in~\eqref{eq:a-def} (here we recall that $h$ has a $-\alpha$-log singularity at 0).  
 
Let 
\eqb \label{eq:hitting-time}
 A_\delta :=  \frac{2}{\gamma} \log \delta^{-1}  \quad \op{and} \quad \tau_\delta :=\inf\{t \geq 0: V_t= A_\delta \}.
\eqe  
As we will see, $\exp(-\tau_\delta)$ is a good estimator of $\ol x_{\delta, L}$. The semicircle average version of Proposition~\ref{prop:shifted-KPZ} is the following simple fact regarding Brownian motion.

\begin{lemma}\label{lem:circle-KPZ}
For $\lambda>0$ we have
\begin{equation}\label{eq:eps1=2}
\lim_{\delta\rta  0 } \frac{\log \E\!\left[e^{-\lambda \tau_\delta} \right]}{\log\delta^{-1} }=
\frac{a -  \sqrt{a^2+4\lambda} }{\gamma} .
\end{equation} 
\end{lemma}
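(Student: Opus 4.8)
The plan is to reduce \eqref{eq:eps1=2} to a standard Laplace-transform computation for the hitting time of a Brownian motion with drift. Recall from \eqref{eq:bm-def} and the discussion after it that $V_t = \mcl B_{2t} + at$ where $\mcl B$ is a standard two-sided linear Brownian motion and $a = Q - \alpha > 0$, and from \eqref{eq:hitting-time} that $A_\delta = \frac{2}{\gamma}\log\delta^{-1}$ and $\tau_\delta = \inf\{t \geq 0 : V_t = A_\delta\}$. Since $a > 0$ the process $V_t$ has positive drift, so $\tau_\delta < \infty$ a.s.\ and the optional stopping machinery for exponential martingales applies cleanly.

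\medskip

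First I would compute $\E[e^{-\lambda\tau_\delta}]$ exactly. Write $V_t = \mcl B_{2t} + at$; after the time change $u = 2t$ this is $\wt{\mcl B}_u + \frac{a}{2}u$ for a standard Brownian motion $\wt{\mcl B}$, and $\tau_\delta$ corresponds (up to the factor $2$) to the first hitting time of level $A_\delta$ by $\wt{\mcl B}_u + \frac a2 u$. For $\beta > 0$ the process $\exp(\beta V_t - (\beta^2 + a\beta)\cdot 2t/2)$... more carefully: $e^{\theta V_t}$ has drift correction $\tfrac12\theta^2\cdot 2 + a\theta\cdot$? Let me just state it as: for each $\theta \geq 0$, $N_t := \exp\big(\theta V_t - (2\theta^2 + a\theta)\, t\big)$ is a martingale (this is immediate from $V_t \eqD \mcl B_{2t}+at$ and the formula for the exponential martingale of Brownian motion with drift). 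Applying optional stopping at $\tau_\delta\wedge n$ and letting $n\to\infty$, using that $N_{\tau_\delta\wedge n}$ is bounded by $e^{\theta A_\delta}$ (since $V_t \leq A_\delta$ for $t \leq \tau_\delta$) and that $\tau_\delta < \infty$ a.s., dominated convergence gives $\E[\exp(-(2\theta^2 + a\theta)\tau_\delta)] = e^{-\theta A_\delta}$. Given $\lambda > 0$, solve $2\theta^2 + a\theta = \lambda$ for the positive root $\theta = \theta(\lambda) = \frac{-a + \sqrt{a^2 + 8\lambda}}{4}$; then $\E[e^{-\lambda\tau_\delta}] = e^{-\theta(\lambda) A_\delta} = \delta^{2\theta(\lambda)/\gamma}$.

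\medskip

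Finally I would extract the stated limit: $\frac{\log\E[e^{-\lambda\tau_\delta}]}{\log\delta^{-1}} = -\frac{2\theta(\lambda)}{\gamma} = \frac{a - \sqrt{a^2+8\lambda}}{2\gamma}$ for every $\delta > 0$, so the limit as $\delta\to0$ is trivially this same constant. One should double-check the normalization: the claimed answer is $\frac{a - \sqrt{a^2+4\lambda}}{\gamma}$, so I need $\sqrt{a^2+8\lambda}$ to actually read as $\sqrt{a^2+4\lambda}$ and the prefactor $\frac{1}{2\gamma}$ to read $\frac1\gamma$ — i.e.\ the martingale exponent must be $2\theta^2 + 2a\theta$ rather than $2\theta^2 + a\theta$, which happens precisely if one uses $V_t \eqD \mcl B_{2t} + 2at$ or equivalently if the variance parameter in $V$ is such that $\langle V\rangle_t = 2t$ pairs with drift $at$ to give quadratic variation and drift both scaled by $2$; the correct bookkeeping is that $e^{\theta V_t - (\theta^2\cdot 2 + a\theta\cdot 2)t/2}$... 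The main (really only) obstacle here is getting this constant exactly right: I would carefully recompute the exponential martingale for $\mcl B_{2t} + at$, namely $\exp\big(\theta\mcl B_{2t} + \theta a t - \theta^2 t\big) = \exp\big(\theta V_t - \theta^2 t\big)$, which is a martingale, so optional stopping at $\tau_\delta$ gives $\E[\exp(\theta V_{\tau_\delta} - \theta^2\tau_\delta)] = 1$, i.e.\ $\E[\exp(-\theta^2\tau_\delta)] = e^{-\theta A_\delta}$; setting $\theta^2 = \lambda$ wrongly — rather one wants the martingale $\exp(\theta V_t - (2\theta^2 + ?)t)$; the clean route is: $\E[e^{-\lambda\tau_\delta}] = e^{-\beta A_\delta}$ where $\beta$ is the positive solution of $2\beta^2 + a\beta = \lambda$? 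No — I will determine $\beta$ as the positive root of $\beta^2 + a\beta = \lambda$, obtained by applying optional stopping to $\exp(\beta V_t - (\beta^2 + a\beta)\cdot t \cdot \frac{?}{})$; concretely, since $V_t$ has generator $2\partial_t^2 + a\partial_t$, the function $e^{\beta x}$ satisfies $2\beta^2 + a\beta$ as its eigenvalue, and the relevant root of $2\beta^2 + a\beta = \lambda$ is $\beta = \frac{-a+\sqrt{a^2+8\lambda}}{4}$, giving limit $-\frac{2\beta}{\gamma}$; matching to $\frac{a-\sqrt{a^2+4\lambda}}{\gamma}$ forces me instead to use $\langle V \rangle_t = t$ (not $2t$) — so I will recheck the assertion after \eqref{eq:bm-def} and use whichever normalization it dictates, then the eigenvalue equation is $\frac12\beta^2 + a\beta = \lambda$ with root $\beta = -a + \sqrt{a^2 + 2\lambda}$... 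In any case the \emph{structure} of the proof is fixed — exponential martingale plus optional stopping — and only the arithmetic of the eigenvalue equation, dictated by the precise variance normalization of $V$ in \eqref{eq:bm-def}, needs care.
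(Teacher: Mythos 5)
Your overall strategy --- exponential martingale for $V$ plus optional stopping at $\tau_\delta$ --- is exactly the paper's argument, and the justification you sketch for optional stopping (boundedness of the stopped martingale via $V_t\le A_\delta$ for $t\le\tau_\delta$, plus $\tau_\delta<\infty$ a.s.\ since $a>0$) is also the right one. But the proposal never actually completes the one computation that matters: you end with three mutually inconsistent candidate eigenvalue equations ($2\beta^2+a\beta=\lambda$, $\beta^2+a\beta=\lambda$, $\tfrac12\beta^2+a\beta=\lambda$) and an explicit deferral to ``recheck the normalization,'' while the only version you carry to completion yields $\frac{a-\sqrt{a^2+8\lambda}}{2\gamma}$, which contradicts the statement. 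That is a genuine gap, and it stems from a concrete error: with $V_t=\mcl B_{2t}+at$ as in~\eqref{eq:bm-def}, the compensator of $e^{\beta \mcl B_{2t}}$ is $\exp\!\left(\tfrac12\beta^2\,\Var(\mcl B_{2t})\right)=e^{\beta^2 t}$, not $e^{2\beta^2 t}$; equivalently the generator of $V$ is $\partial_x^2+a\partial_x$ (one half of the quadratic-variation rate $2$), not $2\partial_x^2+a\partial_x$. Your secondary claim that $\exp(\theta V_t-\theta^2 t)$ is a martingale is also false: that equals the true martingale $\exp(\theta\mcl B_{2t}-\theta^2 t)$ multiplied by $e^{\theta a t}$.

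With the bookkeeping done correctly there is no need to alter the normalization of~\eqref{eq:bm-def}: take $\beta=\tfrac12\bigl(\sqrt{a^2+4\lambda}-a\bigr)>0$, the positive root of $\beta^2+a\beta=\lambda$, so that $t\mapsto\exp(\beta\mcl B_{2t}-\beta^2 t)$ is a nonnegative martingale, bounded up to $\tau_\delta$ because $\mcl B_{2t}=V_t-at\le A_\delta$ there. Optional stopping and $\mcl B_{2\tau_\delta}=A_\delta-a\tau_\delta$ give
\begin{equation*}
1=\E\!\left[\exp\!\left(\beta A_\delta-(a\beta+\beta^2)\tau_\delta\right)\right]
\quad\Longrightarrow\quad
\E\!\left[e^{-\lambda\tau_\delta}\right]=e^{-\beta A_\delta}=\delta^{2\beta/\gamma}
=\delta^{\frac{\sqrt{a^2+4\lambda}-a}{\gamma}},
\end{equation*}
which is an exact identity for every $\delta$ and yields~\eqref{eq:eps1=2}. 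This is precisely the paper's proof; your proposal needed only this one constant pinned down, but as written it does not establish the stated exponent.
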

\begin{proof}
Write $V_t = \mcl B_{2t}+at$, with $\mcl B$ a standard linear Brownian motion. Let 
\eqbn
\beta := \frac{ \sqrt{a^2+4\lambda}-a}{2}
\eqen 
so that $\beta^2+a\beta = \lambda$.   
We observe that $t\mapsto \exp\left(\beta \mcl B_{2t} - \beta^2 t\right)$ is a non-negative martingale. Furthermore, using that $\alpha < Q$ so that $a  > 0$, for $t\leq \tau_\delta$ we have $\mcl B_{2t} \leq A_\delta$.  In particular, $t \mapsto \exp\left(\beta \mcl B_{2 \tau_\delta \wedge t} - \beta^2 \tau_\delta \wedge t\right)$ is bounded and $\BB P[\tau_\delta < \infty] = 1$.
By the optional stopping theorem,
\[
\E\!\left[ \exp\left( \beta \mcl B_{2\tau_\delta} -\beta^2 \tau_\delta \right) \right] =1.
\]
Since $\mcl B_{2\tau_\delta} = A_\delta -  a\tau_\delta$, we have
\eqbn
\E\!\left[e^{-\lambda \tau_\delta}\right]
=  e^{-\beta A_\delta} = \delta^{2\beta/\gamma} = \delta^{\frac{ \sqrt{a^2+4\lambda}-a}{\gamma}}
\eqen
which implies the statement of the lemma.
\end{proof}

To deduce Proposition~\ref{prop:shifted-KPZ} from Lemma~\ref{lem:circle-KPZ}, we first need a lower bound for the $\gamma$-quantum boundary length of an interval. The needed estimate can be deduced in a similar manner to~\cite[Lemma~4.5]{shef-kpz}, but for brevity we give an alternative argument based on the theory of Gaussian multiplicative chaos~\cite{kahane,rhodes-vargas-review}.

\begin{lemma}
\label{lem:free-gff-moments}
Let $h^F$ be as in Section~\ref{sec:moments} and let $\nu_{h^F}$ be its associated $\gamma$-quantum boundary measure. Let $I \subset \partial \BB H$ be a bounded open interval. Then $\nu_{h^F}(I)$ has finite moments of all negative orders.
\end{lemma}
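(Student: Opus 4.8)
The plan is to recognize $\nu_{h^F}(I)$ as the total mass of a Gaussian multiplicative chaos (GMC) measure on the interval $I \subset \partial\BB H$, associated to the restriction of the free-boundary GFF to the boundary. Indeed, by \cite{shef-kpz} and its boundary analogue, $\nu_{h^F}$ is (up to the standard normalization) the GMC measure $e^{\frac{\gamma}{2} h^F(x)}\,dx$ on $\partial\BB H \cong \BB R$, where the boundary trace of $h^F$ is a log-correlated Gaussian field: $\E[h^F(x) h^F(y)] = -2\log|x-y| + g(x,y)$ for a continuous (bounded on compacts) function $g$, and the GMC is subcritical since $\gamma \in (0,\sqrt 2)$, i.e.\ $\gamma/2 < 1$, so that the relevant parameter $\gamma/2$ times the boundary dimension $1$ is below the $L^1$ phase transition. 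The first step is therefore just to invoke this identification and fix notation.

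Next I would apply the general moment theory for GMC. The key input is that for a subcritical GMC measure $M$ on a bounded interval, $\E[M(I)^{-s}] < \infty$ for all $s > 0$; this is a standard fact (see e.g.\ \cite{rhodes-vargas-review}, and it goes back essentially to \cite{kahane}). The cleanest route is to use a multiscale/scaling argument: one shows that for the circle (here, semicircle) average regularization $h^F_\ep$, the restriction of $M$ to a dyadic subinterval of length $2^{-n}$ dominates $c \cdot 2^{-n} e^{\frac{\gamma}{2} X_n}$ where $X_n$ is roughly Gaussian with variance $O(n)$, and that across disjoint scales these contributions are ``sufficiently independent'' (by the long-range correlations being uniformly bounded and the short-range part having the exact $-2\log$ singularity, one can use Kahane's convexity inequality to compare with an exactly scale-invariant field). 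Then a negative moment of the total mass is controlled because the mass is bounded below by a sum of many nearly-independent positive contributions: either one invokes the known result directly, or one runs the standard argument that $\P[M(I) < \eta] \le e^{-c(\log\eta^{-1})^2}$ (or at least decays faster than any power of $\eta$) via a union bound over scales combined with Gaussian tail estimates, which immediately yields finiteness of all negative moments by integrating $\P[M(I)^{-s} > t]$.

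The main obstacle — really the only nontrivial point — is justifying the comparison between $\nu_{h^F}$ and an exactly scale-invariant (or cascade-type) GMC to which the textbook negative-moment bounds apply, i.e.\ handling the bounded-but-nonzero correction $g(x,y)$ in the covariance and the fact that $h^F$ is a rough distribution rather than a pointwise-defined field. This is dispatched by Kahane's convexity inequality: writing the boundary field as an exactly $*$-scale invariant field plus an independent Hölder-continuous (hence bounded on $I$) Gaussian field, the total mass is bounded below by $e^{-\frac{\gamma}{2}\sup_I|(\text{bounded part})|}$ times the mass of the scale-invariant GMC, and a bounded Gaussian supremum has finite negative exponential moments of all orders; alternatively one can cite \cite[Lemma~4.5]{shef-kpz} as the paper itself suggests. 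I would present the GMC argument as the primary line since it is self-contained given \cite{rhodes-vargas-review}, remark that $\gamma/2 < 1$ is exactly the subcriticality needed, and leave the elementary Gaussian tail computations to the reader.
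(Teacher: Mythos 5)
Your proposal is correct and follows essentially the same route as the paper, which likewise identifies $\nu_{h^F}(I)$ with the total mass of a subcritical Gaussian multiplicative chaos on the boundary and cites the standard negative-moment results (\cite[Theorems~2.11 and~2.12]{rhodes-vargas-review}, with \cite[Section~4.4]{qle} for the approximation scheme). The extra detail you give on the multiscale/Kahane-convexity argument is just an unpacking of that citation and does not change the approach.
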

\begin{proof}
This follows from general Gaussian multiplicative chaos theory applied to $\nu_{h^F}$. See, e.g.\ \cite[Theorems 2.11 and 2.12]{rhodes-vargas-review}. See also \cite[Section~4.4]{qle} for an approximation scheme for $\nu_{h^F}$ to which Gaussian multiplicative chaos theory applies (the approximation scheme is stated in the context of the unit disk $\BB D$, but a similar formula works for the upper half plane).  
\end{proof}
  
\begin{lemma}
\label{lem:nu_h-tail}
Let $\tau$ be a stopping time for the filtration $\mcl F_t = \sigma(V_s \,:\, s \in (-\infty , t] )$. Also fix $u >0$. For each $\delta \in (0,1)$, we have
\eqb \label{nu_h tails disk eqn}
\P\!\left[ \nu_h([0,e^{-\tau}])     <    \delta^u \exp\left(  - \frac{\gamma}{2} V_\tau   \right)   \,|\,  \mcl F_\tau  \right] = o_\delta^\infty(\delta)  
\eqe
at a deterministic rate which does not depend on $\tau$. 
\end{lemma}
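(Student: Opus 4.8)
\textbf{Proof proposal for Lemma~\ref{lem:nu_h-tail}.}

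The plan is to reduce the statement, via the Markov property of the field, to a tail estimate for the quantum boundary length of a fixed interval under a free-boundary GFF, and then to invoke the negative-moment bound of Lemma~\ref{lem:free-gff-moments}. First I would recall the relationship between $\nu_h$ restricted to a dyadic scale near the origin and the semicircle average $V_\tau$. Writing $h = h^F - \alpha \log|\cdot|$ and using the scaling/translation covariance of the GFF together with the LQG coordinate change~\eqref{eq:lqg-coord}, one sees that conditionally on $\mcl F_\tau$, the restriction of $h$ to $B_{e^{-\tau}}(0) \cap \BB H$ decomposes as the sum of (i) the harmonic-type part recorded by $V_\tau$, contributing a multiplicative factor $\exp(\frac{\gamma}{2}(V_\tau - Q\tau \cdot 0 \cdots))$ to the boundary measure — more precisely, by the definition~\eqref{eq:bm-def} of $V_t$ and the $\gamma/2$-scaling of $\nu$, the factor is $e^{-\frac{\gamma}{2}V_\tau}$ times the boundary measure at unit scale after rescaling — and (ii) an independent field on $B_{e^{-\tau}}(0)\cap\BB H$ which, after rescaling by $e^{\tau}$, has the law of (the restriction of) a free-boundary GFF on $B_1(0)\cap\BB H$ plus the $-\alpha\log|\cdot|$ singularity, but crucially with the semicircle-average-over-$\partial B_1(0)$ part removed/fixed. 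The point is that this rescaled remainder field is independent of $\mcl F_\tau$ and its law does not depend on $\tau$.

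Concretely, I would show that there is a fixed interval $I \subset \partial(B_1(0)\cap\BB H) \cap \BB R_+$, say $I = (1/2, 1)$ or similar, and a fixed-law random variable $N$ (the $\gamma$-quantum boundary length of $I$ under the appropriately-normalized field from step (i), which by absorbing the bounded $-\alpha\log|\cdot|$ weight on $I$ and comparing to a free-boundary GFF is comparable to $\nu_{h^F}(I)$ up to a deterministic bounded factor) such that, conditionally on $\mcl F_\tau$,
\eqb
\nu_h\big([0, e^{-\tau}]\big) \;\geq\; e^{-\frac{\gamma}{2} V_\tau}\, N ,
\eqe
with $N \eqD \nu_{h^F}(I')$ for a fixed interval $I'$ up to bounded deterministic factors, and $N$ independent of $\mcl F_\tau$. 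Granting this, the conditional probability in~\eqref{nu_h tails disk eqn} is bounded by $\P[N < \delta^u]$, which by Lemma~\ref{lem:free-gff-moments} (finiteness of all negative moments of $\nu_{h^F}(I')$) and Markov's inequality applied to $N^{-p}$ is $\leq \E[N^{-p}] \delta^{up}$ for every $p > 0$; choosing $p$ large gives $o_\delta^\infty(\delta)$ at a deterministic rate depending only on the moment bounds, not on $\tau$. This handles the statement for $[0,e^{-\tau}]$; the left-boundary analogue $[-e^{-\tau},0]$ is identical by reflection symmetry.

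The main obstacle is the bookkeeping in step (i)–(ii): one must carefully decompose $h|_{B_{e^{-\tau}}(0)\cap\BB H}$ into the component measurable with respect to $\mcl F_\tau$ (the semicircle averages around $0$ up to time $\tau$) and the complementary component, verify that after the scaling map $z \mapsto e^{\tau} z$ the latter has a $\tau$-independent law, and track exactly how $\nu_h$ transforms — the $Qt$ term in~\eqref{eq:bm-def} is precisely the Jacobian correction $Q\log|f'|$ from~\eqref{eq:lqg-coord} for the scaling $f$, so that the net multiplicative factor pulled out is $e^{-\frac{\gamma}{2}V_\tau}$ and not some other power. Here the restriction to $h$ rather than $h^F$ matters only through the bounded term $-\alpha\log|\cdot|$ on the fixed interval $I'$, which contributes a deterministic bounded multiplicative factor and hence does not affect the negative-moment finiteness. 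A minor additional point is that $\nu_h$ on $[0,e^{-\tau}]$ only gets \emph{larger} by including more of the interval than just the piece coming from $I'$ at the deepest scale, so the one-sided bound $\geq$ is all that is needed and no delicate two-sided comparison is required. I expect the proof to run closely parallel to the treatment of~\cite[Section~4]{shef-kpz}, in particular to the argument behind~\cite[Lemma~4.5]{shef-kpz}, with Gaussian multiplicative chaos theory supplying the negative moments in place of a hands-on estimate.
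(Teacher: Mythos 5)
Your proposal is correct and follows essentially the same route as the paper: decompose $h|_{B_{e^{-\tau}}(0)\cap\BB H}$ via the strong Markov property of the radial (semicircle-average) part, rescale by $z\mapsto e^{\tau}z$ using the coordinate change~\eqref{eq:lqg-coord} to pull out exactly the factor $e^{-\frac{\gamma}{2}V_\tau}$, and then apply the negative-moment bound of Lemma~\ref{lem:free-gff-moments} together with Chebyshev's inequality to the $\tau$-independent rescaled field. The only cosmetic difference is that you lower-bound by the boundary length of a fixed sub-interval away from the origin to avoid the $-\alpha\log|\cdot|$ singularity, whereas the paper works with $\nu_{\wt h_*}([0,1])$ directly (the singularity only increases the measure there), so both arguments go through.
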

\begin{proof} 
Fix a stopping time $\tau$ as in the statement of the lemma.  The restriction of $h$ to $B_{e^{-\tau}}(0) \cap \BB H$ is determined by the orthogonal projection of $h$ onto the set of functions with mean zero on all semicircles centered at 0 together with the normalized semicircle averages $V_t$ for $t \geq \tau$ (c.f.~\cite[Lemma~4.1]{wedges}). Since $t\mapsto V_t$ has the law of a two-sided drifted Brownian motion normalized to vanish at 0, it follows from the strong Markov property of Brownian motion that the conditional law given $\mcl F_\tau$ of the restriction of $h- h_{e^{-\tau}}(0)$ to $B_{e^{-\tau}}(0) \cap \BB H$ is that of a free boundary GFF restricted to $B_{e^{-\tau}}(0) \cap \BB H$ and normalized so that its semicircle average vanishes on $\bdy B_{e^{-\tau}}(0) \cap \BB H$. It follows from the construction of $\nu_h$ via semicircle averages (see~\cite[Section~6]{shef-kpz}) that $e^{-\frac{\gamma}{2} h_{e^{-\tau}}(0)} \nu_h([0,e^{-\tau}])$ is determined by the restriction of $h- h_{e^{-\tau}}(0)$ to $B_{e^{-\tau}}(0) \cap \BB H$. 

Let $\phi (z) := e^{-\tau} z$. 
Let $\wt h = h\circ \phi      + Q \log e^{-\tau} $, with $Q$ as in~\eqref{eq:Q-def}. By the boundary analogue of~\cite[Proposition~2.1]{shef-kpz}, we have $\nu_h([0, e^{-\tau}]) = \nu_{\wt h}([0,1])$.  
Let $\wt h_*$ be the restriction to $B_1(0) \cap \BB H$ of the field $h\circ \phi - h_{e^{-\tau}}(0)$. By conformal invariance of the free boundary GFF and the discussion above, it follows that the conditional law given $\mcl F_\tau$ of the restriction of $h  \circ \phi  $ to $B_1(0) \cap \BB H$ is the same as the law of $h |_{B_1(0) \cap \BB H}$, modulo a global additive constant. The semicircle average of $h\circ \phi$ over $\bdy B_1(0)\cap\BB H$ is given by $h_{e^{-\tau}}(0)$. It therefore follows that the conditional law of $\wt h_* $ given $\mcl F_\tau$ is the same as the law of $ h|_{B_1(0) \cap \BB H}$. 
 
By the definition of the $\gamma$-quantum boundary measure we have
\eqb \label{eq:coord-new-gff}
\nu_{\wt h}([0,1])
= \exp\left(  \frac{\gamma}{2} h_{e^{-\tau}}(0) - \frac{\gamma}{2} Q \tau \right) \nu_{\wt h_*}([0,1])
= \exp\left(   - \frac{\gamma}{2} V_\tau   \right) \nu_{\wt h_*}([0,1]).
\eqe
By Lemma~\ref{lem:free-gff-moments}, the conditional law given $\mcl F_\tau$ of $\nu_{\wt h_*}([0,1])$ has moments of all negative orders, so by Chebyshev's inequality, for each $\delta>0$ we have that $\P\!\left[\nu_{\wt h_*}([0,1]) \leq \delta^u \,|\, \mcl F_\tau \right]  $ decays faster than any power of $\delta$. We thus obtain the statement of the lemma.
\end{proof}

\subsection{Proof of the moment estimates}
\label{sec:moment-proof}

In this subsection we will prove the upper bound in Proposition~\ref{prop:shifted-KPZ} and the lower bound in Proposition~\ref{prop:boundary-KPZ}, thereby completing the proof of the propositions in Section~\ref{sec:moment-proof}. Our first proof is similar to the argument given in~\cite[Section~4.4]{shef-kpz}. 

\begin{proof}[Proof of Proposition~\ref{prop:shifted-KPZ}, upper bound]
Fix $s \in (0,1)$. For $\delta>0$, let $\tau_{\delta^s}$ be as in~\eqref{eq:hitting-time} with $\delta^s$ in place of $\delta$, so that with $A_\delta$ as in~\eqref{eq:hitting-time} we have
\eqbn
\tau_{\delta^s} = \inf\{t \geq 0 \,:\, V_t = s A_\delta \} .
\eqen
Let $\wh x_{\delta,s} :=\exp(-\tau_{\delta^s})$. For $\lambda > 0$, we have
\begin{align} \label{eq:exponent-split}
 \E\!\left[\ol x_{\delta, L}^{ \lambda  }\right] \leq \E\!\left[\wh x_{\delta,s}^{ \lambda  }\right]   +   \E\!\left[ \ol x_{\delta, L}^{\lambda}  \,; \,  \wh x_{\delta,s}  \leq \ol x_{\delta, L}  \right].
\end{align} 
By Lemma~\ref{lem:circle-KPZ} (applied with $\delta^s$ in place of $\delta$) we have
\eqb \label{eq:exponent-calc}
\lim_{\delta \rta 0} \frac{\log \E\!\left[\wh x_{\delta,s}^{ \lambda  }\right]    }{\log\delta^{-1} } = s \frac{a   -  \sqrt{a^2+4\lambda} }{\gamma} .
\eqe 
On the event $\{  \wh x_{\delta,s}   \leq \ol x_{\delta, L}    \}$ we have  
\eqbn
\nu_h([0, \wh x_{\delta,s}])  \leq  \delta =    \delta^{1-s}  \exp\left( -   \frac{\gamma}{2} V_{  \tau_{\delta^s} } \right)  .
\eqen 
By Lemma~\ref{lem:nu_h-tail}, $\P[ \wh x_{\delta,s}  \leq \ol x_{\delta, L}] = o_\delta^\infty(\delta)$. By definition, we have $\ol x_{\delta, L} \leq r$, so $\E\!\left[ \ol x_{\delta, L}^{\lambda} ; \wh x_{\delta,s}  \leq \ol x_{\delta, L} \right]$ decays faster than any power of $\delta$. Since $s$ is arbitrary, the desired upper bound now follows from~\eqref{eq:exponent-split} and~\eqref{eq:exponent-calc}. 
\end{proof}

Finally we prove the lower bound in Proposition~\ref{prop:boundary-KPZ}.

\begin{proof}[Proof of Proposition~\ref{prop:boundary-KPZ}, lower bound]
For $\delta>0$ let $\tau_\delta$ be as in~\eqref{eq:hitting-time} and let $\wh x_\delta := e^{-\tau_\delta}$. We note that $\tau_\delta \geq 0$, so $\wh x_\delta\leq 1$. Also let $\mcl F_{\tau_\delta} := \sigma(V_t \,:\, t \leq \tau_\delta)$. We claim there exists a constant $c > 0$ (independent of $\delta$) such that
\eqb \label{eq:smaller-ball}
\P\!\left[ \wh x_\delta \leq \ol x_\delta \,|\, \mcl F_{\tau_\delta} \right] = \P\!\left[ \nu_h([-\wh x_\delta , \wh x_\delta]) \leq \delta      \,|\, \mcl F_{\tau_\delta} \right] \geq c \quad\text{a.s.}
\eqe
Assuming that~\eqref{eq:smaller-ball} holds, we get that for $\lambda>0$,
\eqbn
\E\!\left[  \ol x_\delta^\lambda \right] 
\geq \E\!\left[  \wh x_\delta^\lambda \P\!\left[ \wh x_\delta \leq \ol x_\delta \,|\, \mcl F_{\tau_\delta} \right] \right] 
\geq c \E\!\left[ \wh x_\delta^\lambda \right] ,
\eqen 
which implies the desired lower bound. 

It remains only to prove~\eqref{eq:smaller-ball}. 
To this end, we define $\phi$, $\wt h$, and $\wt h_*$ as in the proof of Lemma~\ref{lem:nu_h-tail} with $\tau = \tau_\delta$, so that the conditional law of $\wt h_*$ given $\mcl F_{\tau_\delta}$ is the same as the law of $h|_{B_1(0) \cap \BB H}$; and (as in~\eqref{eq:coord-new-gff}) we have 
\eqbn
\nu_h([-\wh x_\delta , \wh x_\delta]) = \exp\left(   - \frac{\gamma}{2} V_{\tau_\delta}  \right) \nu_{\wt h_*}([-1,1]) = \delta \nu_{\wt h_*} ([-1,1]).
\eqen
It is easy to see that $\P\!\left[\nu_{\wt{h}_*}([-1,1])  \leq 1 \right] > 0$, and~\eqref{eq:smaller-ball} follows. 
\end{proof}

\subsection{Proof of Proposition~\ref{prop:quantum-exponent}}
\label{sec:wedge-free}

Since $\hwedge$ is the circle average embedding of a quantum wedge and $h$ is normalized so that its semicircle average over $\bdy B_1(0) \cap\BB H$ vanishes, Remark~\ref{rmk:wedge-free} implies that we can couple $h$ and $\hwedge$ such that $h \equiv \hwedge$ on $\BB D\cap\BB H$. For $\delta>0$, let $\xwedge_{\delta,L}$ and $\xwedge_{\delta,R}$ be chosen so that $\nu_{\hwedge}([-\xwedge_{\delta,L} , 0])  = \nu_{\hwedge}([0,\xwedge_{\delta,R}])  = \delta$ (as in Remark~\ref{rmk:E-equiv}). By our choice of coupling we have $ \xwedge_{\delta,L} \wedge r = \ol{x}_{\delta,L} $ and $\ol{\xwedge}_{\delta,R} \wedge r = \ol{x}_{\delta,R}$. 

Assume that the SLE curve $\eta'$ is sampled independently from $h$ and $\hwedge$. Then $E_\delta^{T_r}$ is the event that $\eta'$ reaches $\bdy B_r(0)$ before hitting either $-\ol{\xwedge}_{\delta,L}$ or $\ol{\xwedge}_{\delta,R}$ (in particular, $E_\delta^{T_r}$ occurs a.s.\ if $\ol{\xwedge}_{\delta,L}  > r$ and $\ol{\xwedge}_{\delta,R}  > r$). By Proposition~\ref{prop:Euclidean}, for each $u > 0$ we have 
\begin{align}
\label{eq:E-prob-split} 
\ol{x}_{\delta,L}^{\rho/\kappa'+u} \ol{x}_{\delta,R}^{\rho/\kappa'+u}     ( \ol{x}_{\delta,L}+\ol{x}_{\delta,R} )^{\rho^2/(2\kappa')} 
\preceq \P\!\left[ E_\delta^{T_r} \,|\, \hwedge \right]   \preceq \ol{x}_{\delta,L}^{\rho/\kappa'-u} \ol{x}_{\delta,R}^{\rho/\kappa'-u}     ( \ol{x}_{\delta,L}+\ol{x}_{\delta,R} )^{\rho^2/(2\kappa')}  
\end{align} 
with $\rho = \kappa'-4$, as in Section~\ref{sec:Euclidean}, and the implicit constants deterministic and independent of $\delta$ (but possibly depending on $r$ and $u$). 
From the inequality
\eqbn
\frac{1}{2}(\ep_1^p+\ep_2^p) \leq (\ep_1+\ep_2)^p\leq 2^p(\ep_1^p+\ep_2^p)\quad \text{for all} \quad p , \ep_1 , \ep_2 >0 
\eqen
and symmetry between $\ol{x}_{\delta,L}$ and $\ol{x}_{\delta,R}$, 
we infer that the expectations of the left and right sides of~\eqref{eq:E-prob-split} are bounded above and below by constants (depending only on $\kappa'$) times
\begin{align*}
 \E\!\left [\ol{x}_{\delta,L}^{\rho/\kappa' + o_u(1)} \ol{x}_{\delta,R}^{(\rho^2+2\rho)/(2\kappa')+ o_u(1)}  \right]   
\end{align*}
where here the $o_u(1)$ is deterministic and independent of $\delta$. 
By Proposition~\ref{prop:free-boundary-length} applied with $\alpha = \frac32\gamma$, this latter quantity is of order $\delta^{4/\gamma^2 + o_u(1)}$. Since $u$ is arbitrary, we obtain the proposition. \qed

\section{Conclusion of the proof}
\label{sec:embedding}

 In this section we will deduce~\eqref{eq:exponent-compare} from Proposition~\ref{prop:quantum-exponent} and thereby complete the proof of Theorem~\ref{thm:main}. Suppose we are in the setting of Section~\ref{subsec:main}. Define the $\frac32\gamma$-quantum wedge $\mcl W = (\BB H , \hwedge , 0 , \infty)$ and the $\op{SLE}_{\kappa'}$ curve $\wt\eta'$ as in Remark~\ref{rmk:E-equiv}. For $t > 0$ and $\delta>0$, let $\wt E_\delta^t$ be as in~\eqref{eq:def-E}. Equivalently, by Remark~\ref{rmk:E-equiv}, 
\eqb
\label{eq:hit-event}
\wt E_\delta^t := \left\{ \nu_{ \hwedge}\!\left(\wt\eta'([0,t]) \cap \BB R_- \right) \leq \delta \: \op{and} \: \nu_{\hwedge}\!\left(\wt\eta'([0,t]) \cap \BB R_+ \right) \leq \delta  \right\}  .
\eqe
For $r > 0$ let $T_r$ be as in~\eqref{eq:hit-time} and let $E^{T_r}_\delta$ be as in~\eqref{eq:hit-time-event}.  

Roughly speaking, we will show that $\P[  \wt E^1_\delta]$ is a good approximation of $\P[E^{T_1}_\delta]$. Combining this with Proposition~\ref{prop:quantum-exponent} will complete the proof of~\eqref{eq:exponent-compare}. Showing that $\P[E^{T_1}_\delta]$ is less than or equal to $\P[\wt E_\delta^1]$ (up to $o(1)$ error in the exponent)  is relatively simple. One just needs to notice that when $\eta'$ exits the Euclidean unit ball, with overwhelmingly high probability it will contain a Euclidean ball of radius $\delta^{o_\delta(1)}$, and will therefore have quantum mass at least $\delta^{o_\delta(1)}$ with overwhelmingly high probability. Therefore $ \wt E^{t}_\delta$ occurs for some $t\ge \delta^{o_\delta(1)}$. The details are provided in Section~\ref{subsec:Brownian-upper}.

The upper bound  of  $\P[\wt E_\delta^1]$ in terms of $\P[E^{T_1}_\delta]$ is more difficult. One could worry that if  $\wt E_\delta^1$ occurs, then the Euclidean size of $\wt{\eta}'([0,1])$  under the circle average embedding is very small with high probability. This scenario cannot be ruled out directly by using the quantum mass tail estimate because the upper tail only has a power law decay (see \cite[Theorems 2.11 and 2.12]{rhodes-vargas-review}). In Section~\ref{subsec:Brownian-lower}, by exploring the relationship between various embeddings of a $\frac32 \gamma$-quantum wedge, we will show that  conditioned on $\wt E^1_\delta$, there is a uniformly positive probability that $E^{T_r}_\delta$ occurs for some  $\delta$-independent constant $r>0$.

\subsection{Upper bound for $\sigma(\gamma)$}\label{subsec:Brownian-upper}
We first prove an analogue of \cite[Proposition~10.13]{wedges}, which in turn is an analogue of \cite[Lemma~4.5]{shef-kpz}.
\begin{proposition}\label{prop:SLE-hull}
	Fix $\gamma\in (0,\sqrt{2})$ and let $(\bbH, \hwedge, 0 , \infty)$ be a $\frac32\gamma$-quantum wedge under the circle average embedding (Definition~\ref{def:wedge-free}). Let $ \eta'$ be an independent chordal $\SLE_{\kappa'}$ in $\bbH$ from $0$ to $\infty$. Then with $T_1$ as in~\eqref{eq:hit-time}, we have
	\begin{equation}\label{eq:SLE-hull}
	\P[\mu_{\hwedge}(\eta'([0,T_1])) \leq \delta]=o_\delta^\infty(\delta).
	\end{equation}
\end{proposition}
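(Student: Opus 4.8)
The plan is to adapt the strategy of \cite[Lemma~4.5]{shef-kpz} and \cite[Proposition~10.13]{wedges}, splitting the event $\{\mu_{\hwedge}(\eta'([0,T_1]))\le\delta\}$ according to the Euclidean geometry of the hull. First, by Remark~\ref{rmk:wedge-free}, I would replace $\hwedge$ by $h:=h^F-\tfrac32\gamma\log|\cdot|$ on $\BB D\cap\bbH$, where $h^F$ is a free-boundary GFF normalized to have vanishing semicircle average over $\bdy\BB D\cap\bbH$; this is legitimate since $\eta'([0,T_1])\subset\ol{\BB D}\cap\ol{\bbH}$. Fix a radius $\rho=\rho(\delta)$ tending to $0$ more slowly than every power of $\delta$ (for instance $\rho=\delta^{1/\sqrt{\log\delta^{-1}}}$), and let $\cG_\rho:=\{z\in\bbH:\rho\le|z|,\ \rho\le\Im z,\ |z|\le 1-\rho\}$. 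It then suffices to prove: \emph{(a)} with probability $1-o_\delta^\infty(\delta)$ the hull $\eta'([0,T_1])$ contains a Euclidean ball $B_\rho(w)$ with $w\in\cG_\rho$; and \emph{(b)} with probability $1-o_\delta^\infty(\delta)$ one has $\mu_h(B_\rho(w))\ge\delta$ simultaneously for every $w$ in a fixed $\rho$-net of $\cG_\rho$ (which has $\delta^{-o_\delta(1)}$ points). Granting \emph{(a)} and \emph{(b)}: on the intersection of the two good events, $\mu_{\hwedge}(\eta'([0,T_1]))=\mu_h(\eta'([0,T_1]))\ge\mu_h(B_\rho(w))\ge\delta$ for the relevant net point, so $\P[\mu_{\hwedge}(\eta'([0,T_1]))\le\delta]$ is at most the sum of the two failure probabilities, which is $o_\delta^\infty(\delta)$.

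For \emph{(b)} I would argue as in \cite[Lemma~4.5]{shef-kpz}. Conformal covariance of the quantum area measure together with the Markov property of the GFF yields, for $w\in\cG_\rho$, a lower bound $\mu_h(B_\rho(w))\ge c\,\rho^{\gamma Q}\,e^{\gamma h_\rho(w)}\,M_w$, where $h_\rho(w)$ is the (semi)circle average of $h$ over $\bdy B_\rho(w)$ --- a Gaussian of variance $\log\rho^{-1}+O(1)$ and mean bounded below by $-O(1)$, the $-\tfrac32\gamma\log|\cdot|$ term only helping since $|w|\le 1$ --- and $M_w$ is, conditionally on $h_\rho(w)$, the mass of a fixed ball under a unit-scale GFF-type field, which by the interior-ball analogue of Lemma~\ref{lem:free-gff-moments} has finite moments of all negative orders (uniformly in $w$). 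Hence $\P[M_w<t]=o_t^\infty(t)$ and $\P[h_\rho(w)<-\zeta\log\rho^{-1}]=o_\rho^\infty(\rho)$ for every $\zeta>0$. Because $\rho$ is only slightly subpolynomial, i.e.\ $\log\rho^{-1}=o_\delta(\log\delta^{-1})$, inserting these two bounds into the displayed inequality improves the Gaussian contribution from a fixed power of $\delta$ to $o_\delta^\infty(\delta)$, uniformly over $w\in\cG_\rho$; a union bound over the $\delta^{-o_\delta(1)}$-point net then preserves this rate. (It is precisely this trade-off --- needing $\rho$ small enough that \emph{(a)}'s hull-filling statement holds yet large enough, in the sense $\log\rho^{-1}\ll\log\delta^{-1}$, that \emph{(b)} gives the $o_\delta^\infty(\delta)$ rate --- that forces $\rho$ to tend to $0$ subpolynomially rather than being a fixed constant.)

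The heart of the matter --- and where I expect the real difficulty --- is \emph{(a)}: showing that for $\kappa'>8$, where $\eta'$ is genuinely space-filling and $\eta'([0,T_1])$ has nonempty interior a.s., the hull $\eta'([0,T_1])$ contains a ball of radius $\delta^{o_\delta(1)}$ centered in $\cG_\rho$ outside an event of probability $o_\delta^\infty(\delta)$. A fixed macroscopic radius will not work, since with uniformly positive probability the curve reaches $\bdy\BB D$ along a thin hull that contains no macroscopic ball. Following \cite[Proposition~10.13]{wedges}, the approach is a multi-scale argument powered by the conformal Markov property of chordal $\SLE_{\kappa'}$: one produces many ``attempts'' for the curve to surround a ball comparable in size to the current scale --- obtained by combining the dyadic scales between $1$ and $\rho$ with the several spatial locations available at each scale, and by iterating the exploration --- arranged so that, conditioned on all earlier attempts having failed, each attempt still succeeds with probability at least a fixed $p_0>0$; the probability that all of the (then super-logarithmically many in $\delta^{-1}$) attempts fail is at most $(1-p_0)^{N}=o_\delta^\infty(\delta)$. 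The two delicate ingredients are: \emph{(i)} establishing the \emph{conditional} lower bound $p_0$ --- a naive union or independence bound is unavailable, since the events $\{\eta'([0,T_1])\supset B\}$ for disjoint balls $B$ are positively correlated; and \emph{(ii)} controlling via Koebe distortion the conformal maps relating successive stages of the exploration, so that a ball surrounded in the local coordinates of a stage pulls back to a ball in $\cG_\rho$ of the claimed Euclidean radius, in particular one staying a definite (scale-dependent) distance from $0$, $\bdy\bbH$, and $\bdy\BB D$. For $\kappa'\in(4,8)$ the analogous step in \cite[Proposition~10.13]{wedges} is more involved because space-filling $\SLE_{\kappa'}$ is then a more complicated object; for $\kappa'>8$, where $\eta'$ is an ordinary curve, the multi-scale input should be more direct.
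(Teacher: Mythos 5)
Your proposal is correct and follows essentially the same route as the paper: reduce to a free-boundary GFF on $\BB D\cap\BB H$ via the circle average embedding, show the hull contains a Euclidean ball of radius decaying slower than any power of $\delta$ outside an event of probability $o_\delta^\infty(\delta)$ (the paper simply cites the exponential tail of the inverse inradius of $\eta'([0,T_1])$ from the proof of \cite[Proposition~10.13]{wedges} rather than redoing the multi-scale argument you sketch), and then lower-bound the quantum mass of that ball by combining a circle/semicircle average of small variance with negative moment bounds as in \cite[Lemma~4.5]{shef-kpz}. The only implementation differences are that the paper works with the largest inscribed ball (whose random center is handled by conditioning on the hull, which is independent of the field) and the threshold $r \geq (\log\delta^{-1})^{-2}$, whereas you use a deterministic net with a union bound and the smaller radius $e^{-\sqrt{\log\delta^{-1}}}$; both choices work.
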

\begin{proof}
	Let $r$ denote $1/2$ times the radius of the largest Euclidean ball contained in $\eta'([0,T_1])$ and let $z$ be the center of this ball. Then it suffices to show that
	\[
	\P[\mu_{h^F}(B_r(z) )\le \delta]=o_\delta^\infty(\delta),
	\]
where $h^F $ has the law of a free boundary GFF with the additive constant fixed so that the average of $h^F$ on $\BB H \cap \partial \D$ is equal to $0$.
	The reason why we can replace $\hwedge$ by $h^F$ is that $\hwedge|_{\BB D \cap\BB H  }$ agrees in law with the restriction of $h^F - \frac32\gamma \log|\cdot|$ to $\BB D\cap\BB H$ and $ B_r(z) \subset\D\cap\bbH$, so  $-\frac32\gamma\log|\cdot|$ is positive on $B_r(z)$.
	As argued in the proof of \cite[Proposition~10.13]{wedges}, the law of the random variable $ r^{-1}$ has an exponential tail at $\infty$ (although the argument there is for a whole plane, the same argument works for $\bbH$).  In particular, for $\delta>0$ we have 
\[	
\BB P\left[r \leq (\log \delta^{-1})^{-2} \right] = o_\delta^\infty(\delta) .
\]
Conditioned on ${\eta}'([0,T_1])$  (which is independent from $h^F$) the regular conditional law of the circle average $h_r^F(z)$ is  that of  a Gaussian with variance at most $-2\log r$ (see \cite[Section~3.1]{shef-kpz}). Here we use the fact that $B_r(z)$ lies at distance at least~$r$ from~$\R$. By the Gaussian tail bound, for each fixed $s\in (0,1)$ we have
\[	
\P\left[e^{\gamma h_r^F(z)} \le \delta^s  \,|\, r \geq (\log \delta^{-1})^{-2} \right] = o_\delta^\infty(\delta) .
\]
On the other hand, by \cite[Lemma~4.5]{shef-kpz} we have that
\[
\P\left[\mu_{h^F}(B_{r}(z) )\le \delta \,|\,  r \geq (\log \delta^{-1})^{-2} ,\,  e^{\gamma h_r^F(z)} \geq \delta^s    \right] = o_\delta^\infty(\delta) .
\]
The proof concludes. 
\end{proof}

 For fixed $s>0$, we have 
\begin{equation}
\label{eq:Brown-control}
\P[E^{T_{1}}_\delta]\le  \P[ E^{\delta^s}_\delta] +\P[T_1\le \delta^s].
\end{equation}
By Proposition~\ref{prop:SLE-hull}, for each $s>0$,
\eqb \label{eq:bad-event-exponent}
\lim_{\delta\to 0}\frac{\log\P[T_1<\delta^s]}{\log\delta^{-1} }= -\infty .
\eqe
By~Lemma~\ref{lem:cone-exponent}, 
\eqb \label{eq:almost-exponent}
\lim_{\delta\rta 0} \frac{\log \P[ E^{\delta^s}_\delta]}{\log\delta^{-1} } = -(1-s/2)\sigma(\gamma) ,
\eqe 
with $\sigma(\gamma)$ as in~\eqref{eq:sigma-def}. By Proposition~\ref{prop:quantum-exponent}, 
\eqb \label{eq:known-exponent}
\lim_{\delta \rta 0} \frac{\log \P[E^{T_{1}}_\delta]}{\log \delta^{-1}} = -\frac{4}{\gamma^2} .
\eqe
Since $s$ is arbitrary, we can combine~\eqref{eq:Brown-control},~\eqref{eq:bad-event-exponent},~\eqref{eq:almost-exponent}, and~\eqref{eq:known-exponent} to obtain $\sigma(\gamma) \leq 4/\gamma^2$, which is the upper bound in~\eqref{eq:exponent-compare}. 

\subsection{Lower bound for $\sigma(\gamma)$}
\label{subsec:Brownian-lower}

\subsubsection{Notation for quantum surfaces}
\label{subsub:notation}
Let $\cW$ be the $\frac32 \gamma$-quantum wedge in Section~\ref{sec:intro}.
In the remainder of this section, we will consider several different parameterizations of $\mcl W$.
Recall that $\cW$ is an equivalence class of $4$-tuples $(D,\hwedge,a,b)$, with $D\subset \BB C$, $\hwedge$ a distribution on $D$, and $a,b\in\bdy D$ where the equivalence relation is defined in terms of transformations on the form~\eqref{eq:lqg-coord}.

We will consider two coordinate systems: $(\bbH, 0,\infty)$ and $(\cS,+\infty,-\infty)$ where $\cS :=\R\times (0,\pi)$ is a horizontal strip. Whenever we switch between the two coordinate systems, we assume that the corresponding objects are related as in~\eqref{eq:lqg-coord} with $f$ the \emph{canonical coordinate transformation} between the two systems 
\eqb \label{eq:S-to-H}
z \mapsto -e^{-z} ,\quad  z\in \cS .
\eqe
Since a wedge only has two marked boundary points, knowing the coordinate system is not sufficient to determine the embedding of the surface, i.e.\  there is one free parameter corresponding to scaling $\BB H$ or horizontally translating $\mcl S$. We will consider several different embeddings of $\mcl W$ into each of $\bbH$ and $\cS$. We slightly abuse notation by using the same symbols for embeddings into $\bbH$ and $\cS$, always keeping in mind that the corresponding fields are related via the map~\eqref{eq:S-to-H}.  Hereafter, we will denote an embedding of $\hwedge$ in a given coordinate system by $\hwedge^{\bullet}$, where $\bullet$ indicates the particular choice of embedding. After we define $\hwedge^{\bullet}$ in one coordinate system, we simultaneously define $\hwedge^{\bullet}$ in the other coordinate system by applying the coordinate change formula~\eqref{eq:lqg-coord} with the mapping~\eqref{eq:S-to-H}. 

In $(\cS,+\infty,-\infty)$, we let $X^{\bullet}_t$ be the average process of $\hwedge^{\bullet}$ along the vertical line segment $\{t\}\times(0,\pi)$, where $\bullet$ is the symbol representing the embedding. Before fixing the embedding of $\cW$ into ${(\cS,+\infty,-\infty)}$,  the average process is defined up to a horizontal translation. Therefore we can fix the embedding of $\cW$ on $(\cS,+\infty,-\infty)$ by  specifying the translation of the average process. We define the \emph{circle average embedding} of $\mcl W$ into $(\cS,+\infty,-\infty)$ by requiring
$\inf\{ t \in\R: X^{\bullet}_t=0  \}=0$ and denote the field (resp.\ average process) on $\cS$ by $\hwedge^C$ (resp.\ $X^C$). Note that the circle average embedding into $\cS$ is the image of the circle average embedding into $\bbH$ (Definition~\ref{def:wedge-free}) under the coordinate change~\eqref{eq:S-to-H}, and in keeping with our convention the latter embedding will also be denoted by $\hwedge^C$ in the remainder of this section. By the definition of a quantum  wedge and by~\eqref{eq:lqg-coord}, under the circle average embedding in $\cS$ there are standard Brownian motions $\mcl B,\wh{\mcl B}$ such that $X_t^C=\mcl B_{2t}-at$ for $t\geq 0$ and $X_t^C=\wh{\mcl B}_{-2t}-at$ for $t<0$, where $a = Q-\frac32\gamma$ and $\wh{\mcl B}$ is conditioned such that $X_t^C\geq 0$ for $t<0$.

We will use the so-called \emph{unit radius embedding} of $\cW$, which we denote by $\hwedge^U$. On $(\bbH,0,\infty)$ it is defined such that $\diam(\wt\eta'([0,1]))=1$.

We will also consider the so-called \emph{smooth centering embedding}, which we denote by~$\hwedge^S$. It is introduced in the context of quantum cones in~\cite[Section~10.4.2]{wedges}. Let $\phi$ be a fixed positive smooth function supported on $[0,1]$ with integral 1. The smooth centering embedding of $\mcl W$ into $(\cS,+\infty,-\infty)$ is such that
\[ \inf \left\{t\in \R: \int_{-\infty}^\infty X_s\phi(s-t)ds \leq 0 \right\} = 0.\]
Since $\lim_{t\to\infty}X_t=-\infty$ and $\lim_{t\to-\infty}X_t=+\infty$, $\hwedge^S$ is well-defined almost surely.

Different embeddings of $\cW$ into $(\bbH,0,\infty)$ (resp.\ $(\cS,+\infty,-\infty)$) differ by a scaling  (resp.\ horizontal translation). We let $\sigma_{\bullet,\diamond}^{\bbH}(\cW)$ 
	(resp.\ $\sigma_{\bullet,\diamond}^{\cS}(\cW)$) be the possibly random constant $c$ such that $\hwedge^{\diamond}(\cdot)=\hwedge^{\bullet}(c\cdot) + Q\log c$ (resp.\ $\hwedge^{\diamond}(\cdot)=\hwedge^{\bullet}(\cdot+c)$). )  
 
We will also have occasion to consider the quantum surface $$\cW_* :=(\wt\eta'([1,\infty)),\hwedge_*, \wt\eta'(1),\wt\eta'(\infty))$$ obtained by restricting $\cW$ to $\wt\eta'([1,\infty))$. By~\cite[Lemma~9.3]{wedges} (see also the proof of \cite[Lemma~9.2]{wedges}),
$\mcl W_*$
is a $\frac32\gamma$-quantum wedge independent of $(Z_t)_{t\in[0,1]}$.
 We will mainly be interested in $\cW_*$ embedded in $(\bbH, 0,\infty)$ in two ways. One is the circle average embedding $\hwedge_*^C$. The other one is defined as follows.
Consider $\mcl W$ embedded in $(\bbH,0,\infty)$ under the unit radius embedding $\hwedge^U$. Let $\Psi:\bbH\setminus\wt\eta'([0,1]) \to \bbH$ be the conformal map such that $\Psi(\wt\eta'(1))=0,\Psi(\infty)=\infty$, and $\lim_{z\rta\infty} \Psi(z)/z = 1$. Then $\hwedge^{\Psi}_* := \hwedge^U\circ \Psi^{-1} +Q\log|(\Psi^{-1})'|$ gives an embedding of $\mcl W_*$ into $(\bbH,0,\infty)$, which we will call the \emph{$\Psi$-embedding} of $\cW_*$.

\subsubsection{Smooth centering embedding and conclusion of the proof}\label{subsub:lower-diam}
In Section~\ref{subsec:wedge-smooth}, we will prove the following proposition, which is a variant of a result proved in \cite[Section~10.4.2]{wedges} for quantum cones. The proof will use similar techniques as the proof in \cite{wedges}.

\begin{proposition}
\label{prop:wedge-smooth}
	Let $\mcl W = (\bbH,\hwedge^S,0,\infty)$ be a $\frac32\gamma$-quantum wedge with the smooth centering embedding and let $\wt\eta'$ be an independent chordal $\SLE_{\kappa'}$ in $\bbH$ from 0 to $\infty$ parameterized by quantum mass with respect to $\hwedge^S$. There are deterministic constants $c,r>0$ and an event $G$ such that the following is true for all $\delta\in (0,\frac12)$.
	\begin{enumerate}[(i)]
		\item $\P[G\,|\, \wt E^1_\delta]\ge c$. \label{item:G-prob}
		\item On $G\cap \wt E^1_\delta$, $\diam(\wt \eta'([0,1]))>r$. \label{item:G-diam}
	\end{enumerate} 
\end{proposition}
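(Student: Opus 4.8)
\textbf{Proof proposal for Proposition~\ref{prop:wedge-smooth}.}

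The plan is to transfer the estimate from the circle average embedding (where we have good control via Lemma~\ref{lem:cone-exponent}, Proposition~\ref{prop:quantum-exponent}, and the Brownian description of the average process) to the smooth centering embedding, and in doing so to control the Euclidean diameter of $\wt\eta'([0,1])$ from below. The key point is that the event $\wt E^1_\delta$ depends only on the quantum surface $\mcl W$ together with the curve $\wt\eta'$ modulo monotone reparameterization, so it is embedding-independent; only the \emph{Euclidean} diameter of $\wt\eta'([0,1])$ depends on the embedding. So the real content is: under some embedding that is comparable to the circle average embedding, conditioned on $\wt E^1_\delta$, the set $\wt\eta'([0,1])$ is not Euclidean-tiny with uniformly positive probability.

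First I would work with the circle average embedding $\hwedge^C$ in the strip coordinates $(\cS, +\infty, -\infty)$, where $X^C_t = \mcl B_{2t} - at$ for $t \ge 0$. The event $\wt E^1_\delta$ forces the quantum left/right boundary lengths of $\wt\eta'([0,1])$ (equivalently of $\BB H \setminus \wt\eta'([0,1])$, using that $\wt\eta'$ is boundary-filling) to be at most $\delta$ plus whatever was already there, but more usefully: by Remark~\ref{rmk:E-equiv}, $\wt E^1_\delta$ is the event that $\wt\eta'$ does not hit $\xwedge_{\delta,L}$ or $\xwedge_{\delta,R}$ before quantum time $1$, and under $\wt E^1_\delta$ the curve has swallowed quantum mass exactly $1$. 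The idea is that conditioned on $\wt E^1_\delta$ the average process $X^C$ near $t = 0$ (i.e.\ near the unit semicircle) is not too negative with positive probability: intuitively, $\wt E^1_\delta$ is roughly a union over scales of events that the average process has a ``cone-like'' excursion, and on a uniformly positive fraction of the conditional law the relevant excursion happens at scale comparable to $1$ rather than at a microscopic scale. Concretely I would define $G$ in terms of the average process $X^S$ under the smooth centering embedding: say $G = \{ X^S_t \ge -K \text{ for } t \in [-1,2], \text{ and } X^S \text{ stays below } -2K \text{ outside } [-L,L]\}$ for suitable constants $K, L$, i.e.\ $G$ says that the ``bulk'' of the wedge sits at macroscopic scale. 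Item~\eqref{item:G-prob} then amounts to a quantitative non-degeneracy statement: the conditional law of $X^S|_{[-L,L]}$ given $\wt E^1_\delta$ puts uniformly positive mass on such a configuration. I would prove this the same way the analogous statements are proved in~\cite[Section~10.4.2]{wedges}: use the comparison $\sigma_{S,C}^{\cS}(\mcl W)$ between the smooth centering and circle average embeddings together with the decomposition of $\mcl W$ at quantum time $1$ into $\mcl W|_{\wt\eta'([0,1])}$ and the independent wedge $\mcl W_*$ (\cite[Lemma~9.3]{wedges}), and absolute continuity of the field/Brownian-motion description near the relevant scale.

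For item~\eqref{item:G-diam}, once $G$ holds the average process $X^S$ is bounded below by $-K$ on $[-1,2]$, so the $\gamma$-quantum mass assigned by $\hwedge^S$ to, say, the region of $\cS$ corresponding to $\{t \in [0,1]\}\times(0,\pi)$ is comparable (up to constants depending on $K$) to the mass assigned by a field whose vertical-average part is order $1$; combined with the $\mcl H^\dagger$-part being an ordinary GFF, one gets deterministic upper and lower bounds (with positive probability, or after shrinking $G$) on the quantum mass of any fixed Euclidean ball of macroscopic size in the corresponding region of $\BB H$. Since $\mu_{\hwedge^S}(\wt\eta'([0,1])) = 1$ by the quantum-mass parameterization, $\wt\eta'([0,1])$ cannot be contained in a set of small quantum mass; but on $G$ small Euclidean sets have small quantum mass, so $\wt\eta'([0,1])$ must have Euclidean diameter at least some deterministic $r > 0$. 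This is exactly the role played by \cite[Proposition~10.13]{wedges} / \cite[Lemma~4.5]{shef-kpz} and I would cite those for the field-to-mass comparison, restricting attention to a ball bounded away from $\partial\BB H$ so the free-boundary GFF moment bounds apply and noting (as in the proof of Proposition~\ref{prop:SLE-hull}) that the $-\frac32\gamma\log|\cdot|$ singularity only helps.

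The main obstacle I expect is item~\eqref{item:G-prob}: controlling the conditional law of the smooth-centering average process given $\wt E^1_\delta$ \emph{uniformly in $\delta$}. The difficulty is that $\wt E^1_\delta$ is a rare event (probability $\delta^{\sigma(\gamma)+o_\delta(1)}$) and conditioning on rare events can distort the field; one must show the distortion is not so severe as to push the bulk of the wedge to a microscopic scale. The way around this is to use the exact Markovian/renewal structure: decompose $\wt E^1_\delta$ according to the last time the average process $X^C$ returns to a reference level, use the strong Markov property to factor the conditional law, and observe that with the correct normalization (smooth centering) the ``local picture near the bulk'' is tight and bounded away from degeneracy uniformly in $\delta$ — this is precisely the kind of argument carried out for quantum cones in \cite[Section~10.4.2]{wedges}, which I would adapt to the wedge setting, using \cite[Lemma~9.3]{wedges} for the independence of $\mcl W_*$ from $(Z_t)_{t\in[0,1]}$ in place of the corresponding cone statement.
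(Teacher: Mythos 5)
Your high-level skeleton is right in two respects: the key structural input is indeed the decomposition of $\mcl W$ at quantum time $1$ into the surface $\mcl U$ filled by $\wt\eta'([0,1])$ and the independent wedge $\mcl W_*$ of \cite[Lemma~9.3]{wedges}, and the technique is an adaptation of \cite[Section~10.4.2]{wedges}. But both items of your argument have genuine gaps at the point where the work actually happens. For item~\eqref{item:G-diam}, your step ``on $G$ small Euclidean sets have small quantum mass, so $\wt\eta'([0,1])$ must have Euclidean diameter at least $r$'' is exactly the step the paper flags as unavailable: the upper tail of the quantum mass of a ball decays only polynomially, and — worse — the region in question is precisely the conditioned part of the surface, so you cannot get a uniform-in-$\delta$ bound on $\P[\mu_{\hwedge^S}(B_r(0)\cap\BB H)<1 \mid \wt E^1_\delta]$ by a worst-case estimate against an event of probability $\delta^{\sigma(\gamma)+o_\delta(1)}$. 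The paper circumvents this entirely: it never bounds quantum mass of Euclidean balls under the smooth centering embedding. Instead it works in the \emph{unit radius} embedding $\hwedge^U$ (where $\diam(\wt\eta'([0,1]))=1$ by fiat), expresses the smoothed vertical average $\int X^U_s\phi(s+t)\,ds$ as an inner product, transports it via the uniformizing map $g$ of $\BB H\setminus\wt\eta'([0,1])$ to the circle average embedding of $\mcl W_*$, controls the coordinate-change error by a deterministic constant $K$ using distortion estimates (valid because $g\in\cG$ on $G_1\cap G_2$ by Lemma~\ref{lem:distortion}), and invokes the ergodic-theoretic Lemma~\ref{lem:smooth-positive} for $\mcl W_*$ to conclude that the smoothed average is nonnegative for all $t\ge t_0$. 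This yields $\sigma^{\bbH}_{S,U}(\cW)\ge e^{-t_0}$, i.e.\ the smooth centering and unit radius embeddings differ by a bounded scaling, which \emph{is} the diameter bound.

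For item~\eqref{item:G-prob}, your proposed mechanism — decomposing $\wt E^1_\delta$ by the last return of $X^C$ to a reference level and factoring via the strong Markov property — does not work as stated, because $\wt E^1_\delta$ is not a function of the radial average process: it depends on the curve and on the boundary length measure of the conditioned surface $\mcl U$. The paper instead makes $G$ a triple intersection $G_1\cap G_2\cap F(t_0)$ in which the only component correlated with $\wt E^1_\delta$ is $G_1=\{1\le L_1,R_1\le 2\}$, an event about the peanosphere Brownian motion $Z|_{[0,1]}$ whose conditional probability given $\wt E^1_\delta$ is bounded below uniformly in $\delta$ by Shimura's theorem \cite[Theorem~2]{shimura-cone}; the events $G_2$ and $F(t_0)$ depend only on $\mcl W_*$ and are therefore exactly independent of $\{G_1,\wt E^1_\delta\}$, so no conditioning of the field on a rare event ever has to be analyzed. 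Without this reduction (or an equivalent one), the ``uniform non-degeneracy of the conditional law'' you invoke is precisely the unproved crux of the proposition.
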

\begin{remark}\label{rmk:ratio}
The condition that $\diam(\wt \eta'([0,1]))>r$ in~\eqref{item:G-diam} above can also be written as $\sigma_{S,U}^{\bbH}(\cW)>r$, in the notation of Section~\ref{subsub:notation}.
\end{remark}
 
Before we prove Proposition~\ref{prop:wedge-smooth} in Section~\ref{subsec:wedge-smooth}, we first explain why it almost implies the lower bound for $\sigma(\gamma)$ in~\eqref{eq:exponent-compare}. 
If Proposition~\ref{prop:wedge-smooth} were true with the circle average embedding $\hwedge^C$ in place of $\hwedge^S$, then the corresponding event $G$ would satisfy $G\cap  \wt E^1_\delta \subset E^{T_r}_\delta$. By condition~\eqref{item:G-prob} in Proposition~\ref{prop:wedge-smooth}, $\P[\wt E_\delta^1] \preceq \P[ E_\delta^{T_r}]$.  Combined with Proposition~\ref{prop:quantum-exponent}, this implies $\sigma(\gamma) \geq 4/\gamma^2$. 

The following simple fact bridges the gap between the smooth centering embedding and the circle average embedding.

\begin{lemma}
\label{lem:bump-circle}
	Let $a>0$ and let $\mcl B_t$ be a standard linear Brownian motion starting from~$0$. For $M>0$, let $\tau_M=\inf\{t\ge 0:\mcl B_t-at =-M\}$. Let
	 $F_M$ be the event that 
\[
\int_{0}^\infty (\mcl B_s-as)\phi(s-t)ds\ge 0,\quad \forall t\in [0,\tau_M]. 
\]
There are deterministic, $M$-independent constants $c,C>0$ such that $\P[F_M]\le Ce^{-cM^2}$ for each $M>0$. The same holds if we replace $\mcl B_t$ by $\mcl B_{2t}$
\end{lemma}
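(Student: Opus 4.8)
The plan is to bound $\P[F_M]$ by comparing the smoothed process $t \mapsto \int_0^\infty (\mcl B_s - as)\phi(s-t)\,ds$ against the raw process $\mcl B_t - at$, whose first passage to $-M$ is being considered. First I would observe that on the event $F_M$, the smoothed average stays nonnegative on all of $[0,\tau_M]$; in particular it is nonnegative at time $\tau_M - 1$ (for $M$ large enough that $\tau_M > 1$, which itself has probability $1 - o_M^\infty(1)$ since $\tau_M \to \infty$ a.s.\ as $M \to \infty$). Since $\phi$ is supported on $[0,1]$ with integral $1$, the value of the smoothed process at time $\tau_M - 1$ is a weighted average of $\{\mcl B_s - as : s \in [\tau_M - 1, \tau_M]\}$, so nonnegativity there forces $\sup_{s \in [\tau_M-1,\tau_M]} (\mcl B_s - as) \geq 0$. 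On the other hand, $\mcl B_{\tau_M} - a\tau_M = -M$ by definition of $\tau_M$. Hence $F_M$ (intersected with $\{\tau_M > 1\}$) implies that the process $\mcl B_s - as$ makes an upward fluctuation of size at least $M$ over the time interval $[\tau_M - 1, \tau_M]$ of length $1$.

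The key step is then a large-deviations estimate: by the strong Markov property at the stopping time $\sigma_M := \tau_M - 1$ (or more carefully, by conditioning on $\mcl F_{\tau_M}$ and running Brownian motion backwards, or by a union bound over a discretization of the possible value of $\tau_M$), the probability that $\mcl B$ oscillates by $M$ over a unit time interval anywhere is controlled. Concretely, I would use that for a standard Brownian motion, $\P\big[\sup_{s \in [t, t+1]} |\mcl B_s - \mcl B_t| \geq M\big] \leq C e^{-cM^2}$ uniformly in $t$, together with the reflection principle / Gaussian tail bound; the linear drift $-as$ only helps (makes an upward fluctuation harder) so can be dropped in the upper bound. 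The slight subtlety that $\tau_M$ is random is handled because the backward increment $\mcl B_{\tau_M} - \mcl B_{\tau_M - 1}$, conditioned on $\mcl F_{\tau_M}$, is again a Brownian increment over unit time (reversibility of Brownian bridge increments), so the same $Ce^{-cM^2}$ bound applies. Combining with $\P[\tau_M \leq 1] = o_M^\infty(1) \leq Ce^{-cM^2}$ (after adjusting $c$) gives $\P[F_M] \leq Ce^{-cM^2}$.

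For the final sentence, replacing $\mcl B_t$ by $\mcl B_{2t}$: this is just a deterministic time change, $\mcl B_{2t} = \sqrt{2}\,\wt{\mcl B}_t$ in law for a standard Brownian motion $\wt{\mcl B}$, which rescales $M$ by a constant factor and leaves the form $Ce^{-cM^2}$ unchanged (with a different $c$); alternatively one reruns the identical argument with the unit interval $[\tau_M - 1, \tau_M]$ replaced by $[\tau_M - \tfrac12, \tau_M]$ in the original time parameter. The main obstacle I anticipate is making the "run Brownian motion backwards from $\tau_M$" step fully rigorous — i.e.\ justifying that the backward increment over a unit interval ending at the stopping time $\tau_M$ still enjoys a Gaussian-type tail bound uniformly. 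This is cleanest to handle by a union bound: write $\P[F_M, \tau_M > 1] \leq \sum_{k \geq 1} \P\big[\tau_M \in (k, k+1],\ \sup_{s \in [k-1, k+1]}(\mcl B_s - as) \geq 0\big]$, and on $\{\tau_M > k\}$ we have $\mcl B_k - ak > -M$ is not yet at $-M$ but we need the fluctuation up; more simply, on $\{\tau_M \in (k,k+1]\}$ we know $\mcl B_s - as \geq -M$ for $s \leq k$ and $\mcl B_{\tau_M} - a\tau_M = -M$, while the smoothed nonnegativity at $\tau_M - 1 \in [k-1, k]$ forces $\mcl B_{s_0} - a s_0 \geq 0$ for some $s_0 \in [k-1,k]$. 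Since the straight-line interpolation between the values at $s_0$ and $\tau_M$ has slope about $-M$, and the drift contributes only $O(1)$ over an interval of length $\leq 2$, this forces $\mcl B$ itself to drop by $M - O(1)$ over an interval of length $\leq 2$; the probability of this, for a fixed such interval $[k-1, k+1]$, is $\leq C e^{-cM^2}$ by the Gaussian tail bound, and summing the geometrically-decaying-in-effect series (using that $\P[\tau_M \in (k,k+1]]$ decays and the fluctuation bound is uniform) yields the claim. I would write this union-bound version to avoid any measurability worries about the backward process.
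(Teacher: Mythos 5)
Your core reduction is the same as the paper's: since $\phi$ is supported on $[0,1]$ with unit mass, nonnegativity of the smoothed process at a time about one unit before the drifted Brownian motion reaches level $\approx -M$ forces $\mcl B_s-as$ to touch $0$ somewhere in a unit window adjacent to a point where it equals $\approx -M$, i.e.\ an oscillation of order $M$ over an interval of length $O(1)$, which costs $e^{-cM^2}$. Where you diverge is in handling the randomness of where that window sits: the paper anchors the window at the \emph{last} time $\tau'_{M-1}$ before $\tau_M$ at which the process equals $1-M$, time-reverses, and applies Doob's inequality to the reversed (conditioned) drifted Brownian motion; you instead take a union bound over integer translates $[k-1,k+1]$ covering the possible locations of $[\tau_M-1,\tau_M]$. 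Your route is arguably more elementary (no time reversal or last-exit decomposition), and it is correct, but the one step you must tighten is the summability of $\sum_k \P[\tau_M\in(k,k+1],\,\mathrm{osc}_{[k-1,k+1]}(\mcl B_\cdot-a\cdot)\ge M]$: the phrase ``geometrically-decaying-in-effect'' is not accurate as stated, because $\P[\tau_M\in(k,k+1]]$ does not decay in $k$ until $k\gg M/a$, while the uniform bound $Ce^{-cM^2}$ alone gives a divergent series. The standard fix is to split the sum at $k_0=\lceil M^2\rceil$: for $k\le k_0$ use the oscillation bound termwise, giving at most $M^2\cdot Ce^{-cM^2}\le C'e^{-c'M^2}$; for $k>k_0$ bound the whole tail by $\P[\tau_M>M^2]\le\P[\mcl B_{M^2}>aM^2-M]\le e^{-a^2M^2/8}$ for $M$ large (alternatively, Cauchy--Schwarz on each term achieves the same). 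With that adjustment, together with your reflection-principle bound on $\P[\tau_M\le 1]$ and the scaling remark for $\mcl B_{2t}$, the argument is complete.
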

\begin{proof}
By the reflection principle for Brownian motion,
\[	
\P[\tau_{M-1}\le 2]\le \P\!\left[ \inf_{t \in [0,2]} \mcl B_t  \le 1+2a-M \right] \le Ce^{-cM^2}
\] 
for some $c,C>0$ as in the statement of the lemma.
	
	It remains to control the probability of $F_M\cap \{\tau_{M-1}>2 \}$.  We assume that $M>1$ (so that the time $\tau'_{M-1}$ we define next is well-defined and satisfies $\tau'_{M-1}<\tau_M$ almost surely). Let $\tau'_{M-1}$ be the last time $t$ before $\tau_M$ such that $\mcl B_t-at=1-M$. Since $\phi$ is supported on $[0,1]$, on the event $F_M\cap \{\tau_{M-1}>2\}$ there must be a time $t\in [\tau'_{M-1}-1,\tau'_{M-1}]$ such that $\mcl B_t-at\ge 0$. Note that the time reversal of $\{\mcl B_t-at: t\in [\tau'_{M-1}-1,\tau'_{M-1}] \}$ is a Brownian motion with drift starting from $1-M$ conditioned on the uniformly positive probability event that it does not reach $-M$ before time $1$. Hence Doob's maximal inequality implies $\P[F_M,\tau_{M-1}>2 ]\le Ce^{-cM^2}$. 
	
	By scaling, the statement still holds if we replace $\mcl B_{t}$ by $\mcl B_{2t}$.
\end{proof}

\begin{proof}[Proof of the lower bound of $\sigma(\gamma)$ given Proposition~\ref{prop:wedge-smooth}]
Given $\delta >0$, set $M=|\log\delta|^{\frac23}$. Let $A_M$ be the event that  
\[\inf \left\{t\in \R: \int_{-\infty}^\infty X^C_s\phi(s-t)ds=0 \right\}>\inf \left\{t\in \R: X^C_t=-M\right\}\]
where $X^C_t$ is the average process of $\hwedge^C$ in $(\cS,+\infty,-\infty)$. In this case, $X^C_t = \mcl B_{2t} - a t$ for $t\geq 0$, where $\mcl B$ is a standard linear Brownian motion and $a=Q-\frac32\gamma >0$. Furthermore, $A_M\subset F_M$ where $F_M$ is as in Lemma~\ref{lem:bump-circle} for this Brownian motion with drift. Therefore, $\P[A_M]\le C\exp(-c|\log\delta|^{\frac43}) = o_\delta^\infty(\delta)$.

On the event $G\cap \wt E_\delta^1 \cap A^c_M$, under $(\cS,+\infty,-\infty)$ coordinates and the smooth centering embedding of $\mcl W$, the following are true:
\begin{enumerate}[(i)]
\item $\wt\eta'([0,1])  \not\subset [-\log r,\infty)\times [0,\pi]$. \label{item:GEA-eta}
\item $\inf \{t\in \R: X^S_t=-M\}>0$.
\label{item:GEA-inf}
\item 	$\nu_{\hwedge^S}\!\left(\wt\eta'([0,1]) \cap (\R\times\{0\}) \right)\le \delta$ and $ \nu_{\hwedge^S}\!\left(\wt\eta'([0,1]) \cap (\R\times\{\pi\}) \right) \leq \delta$.
\label{item:GEA-length}
\end{enumerate}

\begin{figure}[ht!]
\begin{center}
\includegraphics[scale=1]{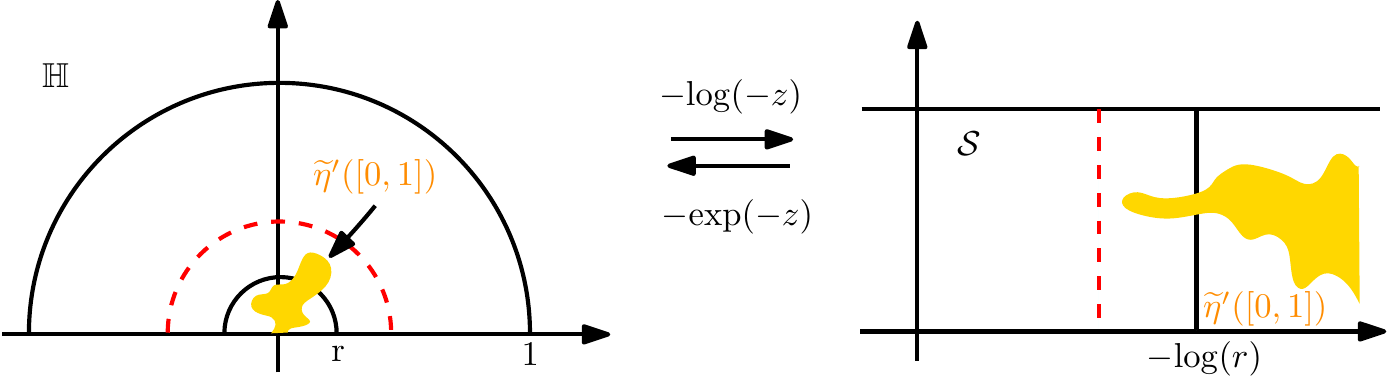}
\end{center}
\caption{Smooth centering embedding of $\mcl W$ into $(\BB H,0,\infty)$ (left) and $(\mcl S,+\infty,-\infty)$ (right). On the left (resp.\ right), the dotted red semi-circle (resp.\ line segment) corresponds to the unit radius (resp.\ intersection with the imaginary axis) under the circle average embedding of the modified field $\hwedge_M$. Note that this semi-circle (resp.\ line segment) is contained in (resp.\ to the right of) the unit circle (resp.\ imaginary axis) for the smooth centering embedding on the event $A_M^c$ in the proof of the lower bound of $\sigma(\gamma)$. The event $G$ of Proposition~\ref{prop:wedge-smooth} is such that if $G\cap \wt{E}^1_\delta$ occurs we have $\op{diam}(\wt{\eta}'([0,1]))>r$ on the left (resp.\ $\wt{\eta}'([0,1])$ is not contained in $[-\op{log} r,\infty)\times [0,\pi]$ on the right).}
\label{fig2}
\end{figure}

Let $\hwedge_M := \hwedge+M$. Since the law of a quantum wedge is invariant under multiplying its area by a constant~\cite[Proposition~4.6]{wedges}, $(\cS, \hwedge_M , +\infty ,-\infty)$ has the law of a $\frac32\gamma$-quantum wedge. Let $\hwedge_M^C$ be the circle-average embedding of this wedge into $\cS$. Let $\wt\eta'_M$ be given by $\eta'$ in $(\cS , +\infty,-\infty)$-coordinates parameterized by quantum mass with respect to $\hwedge_M^C$.
If $G\cap \wt E_\delta^1 \cap A^c_M$ occurs, then
if we consider $\hwedge^C_M$  under $(\cS,+\infty,-\infty)$ coordinates,  the above conditions~\eqref{item:GEA-eta} through~\eqref{item:GEA-length} imply that the following are true.
\begin{enumerate}[(i)]
\item $\wt\eta_M'([0,e^{\gamma M}])  \not\subset [-\log r,\infty)\times [0,\pi] $. 
\item 	$\nu_{\hwedge^C_M}\!\left(\wt\eta_M'([0,e^{\gamma M}]) \cap (\R\times\{0\}) \right)\le \delta e^{\gamma M/2}$ and\\
 $ \nu_{\hwedge^C_M}\!\left(\wt\eta_M'([0,e^{\gamma M}]) \cap (\R\times\{\pi\}) \right) \leq \delta e^{\gamma M/2}$.
\end{enumerate} 
In particular, if we switch back to $(\bbH,0,\infty)$ coordinates, the event $E^{T_r}_{\delta e^{\gamma M/2 }}$ as defined in Proposition~\ref{prop:quantum-exponent} with $(\hwedge^C_M,\wt\eta_M')$  in place of $(\hwedge^C,\wt\eta')$ occurs. Since $(\hwedge^C_M,\wt\eta_M')\overset{d}{=} (\hwedge^C,\wt\eta')$,
\[
\P[G\cap \wt E^1_\delta \cap A^c_M]\le \P[E^{T_r}_{\delta e^{\gamma M/2 }}].
\]
By Proposition~\ref{prop:quantum-exponent} (recall that $M=|\log\delta|^{\frac23}$) we have
\[
\lim_{\delta\to 0}\frac{\log\P[E^{T_r}_{\delta e^{\gamma M/2 }} ]}{\log\delta^{-1}}=-\frac{4}{\gamma^2}.
\]
By condition~\eqref{item:G-prob} in Proposition~\ref{prop:wedge-smooth},
\[ 
-\sigma(\gamma)=\lim_{\delta\to 0}\frac{\log\P[G,\wt E^1_\delta]}{\log\delta^{-1}}\le \lim_{\delta\to 0}\frac{\log\P[E^{T_r}_{\delta e^{\gamma M/2 }} ]}{\log\delta^{-1}}\vee
\lim_{\delta\to 0}\frac{\log\P[A_M ]}{\log\delta^{-1}}= -\frac{4}{\gamma^2}. \qedhere
\]
\end{proof} 

\subsubsection{Proof of Proposition~\ref{prop:wedge-smooth}}
\label{subsec:wedge-smooth}
In light of the preceding two subsections, to complete the proof of~\eqref{eq:exponent-compare} and hence of Theorem~\ref{thm:main}, it remains only to prove Proposition~\ref{prop:wedge-smooth}.
Recall the definition of the wedge $\mcl W_*$ from Section~\ref{subsub:notation}. We first construct an event where the scaling constant $\sigma_{\Psi , C}(\mcl W_*)$ (as defined in Section~\ref{subsub:notation}) is bounded from above and below. 
\begin{lemma}\label{lem:distortion}
	There is a deterministic constant $c_0 \in(0,1)$, an event $G_1$ which is measurable with respect to $(Z_t)_{t\in[0,1]}$, and an event $G_2$ which is independent of $(Z_t)_{t\in[0,1]}$ such that the following holds for each $\delta\in (0,\frac12)$:
	\begin{enumerate}[(i)]
	\item  $\P[G_1 \,|\,  \wt E^1_\delta ]\geq c_0$ and $\P[G_2]\geq c_0$; \label{item:distortion-prob}
	\item On the event $\wt E^1_\delta \cap G_1\cap G_2$, $\sigma_{\Psi, C}^{\bbH}(\cW_*)  \in [10^{-3},10^3]$\label{item:distortion-scaling}
	\end{enumerate}
\end{lemma}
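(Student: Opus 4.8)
The strategy is to exhibit $\sigma_{\Psi,C}^{\bbH}(\cW_*)$ as a deterministic function of two independent pieces of data: the ``bead'' cut out by $\wt\eta'$ up to time $1$, which is measurable with respect to $(Z_t)_{t\in[0,1]}$, and the surface $\cW_*$, which is independent of $(Z_t)_{t\in[0,1]}$ by \cite[Lemma~9.3]{wedges}; one then controls each factor by a separate event. Write $\mathfrak{b}$ for the bead $(\wt\eta'([0,1]),\hwedge^U|_{\wt\eta'([0,1])},\wt\eta'(0),\wt\eta'(1))$, viewed as a quantum surface with two marked boundary points and normalized to have unit Euclidean diameter. By the locality of the mating-of-trees encoding (cf.\ \cite[Theorem~1.14]{wedges}), together with the fact that the quantum lengths of the two sides of $\partial\wt\eta'([0,1])\cap\bbH$ are recorded by $(L_1,R_1)$, the bead $\mathfrak{b}$ is a measurable function of $(Z_t)_{t\in[0,1]}$; and by the conformal welding description of \cite{wedges} the pair $(\mathfrak{b},\cW_*)$ determines $\cW=(\bbH,\hwedge^U,0,\infty)$ together with the hull $\wt\eta'([0,1])$ and the point $\wt\eta'(1)$, hence the map $\Psi$. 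Since $\hwedge_*^C$ depends only on $\cW_*$, this yields $\sigma_{\Psi,C}^{\bbH}(\cW_*)=\Phi(\mathfrak{b},\cW_*)$ for a measurable $\Phi$ whose two arguments are independent. The plan is then to choose $G_1=\{\mathfrak{b}\in A_1\}$ and $G_2=\{\cW_*\in A_2\}$ such that $\Phi(A_1\times A_2)\subseteq[10^{-3},10^3]$, $\P[\mathfrak b\in A_1\mid\wt E^1_\delta]\ge c_0$ for all $\delta\in(0,\tfrac12)$, and $\P[\cW_*\in A_2]\ge c_0$.

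I would take $A_1$ to be a non-degeneracy event for the bead: that, in the unit-radius embedding of $\cW$, the hull $\wt\eta'([0,1])$ contains a fixed half-ball $B_{c_1}(w_1)\cap\bbH$, that the prime end $\wt\eta'(1)$ is accessible from $\bbH\setminus\wt\eta'([0,1])$ through a cone of fixed size and aperture, and that the quantum area and the left and right quantum boundary lengths of $\mathfrak b$ lie in a fixed compact subset of $(0,\infty)$; note that $\wt\eta'([0,1])\subseteq\overline{B_1(0)}$ automatically. On $A_1$, Koebe's distortion theorem bounds $|\Psi'|$ and $|(\Psi^{-1})'|$ from above and below, uniformly over $A_1$, at every scale in a fixed range around $0$ and around $\infty$; equivalently, the semicircle-average process of $\hwedge_*^\Psi$ about $0$ with the logarithmic singularity subtracted (as in \eqref{eq:bm-def}) agrees, up to a bounded additive error on the relevant range of scales, with the analogous process of $\hwedge^U$ about $\wt\eta'(1)$, which sits at scale $\asymp 1$ because $\diam(\wt\eta'([0,1]))=1$. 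I would take $A_2$ to be a fixed, $\delta$-independent event on which $\cW_*$ is regular near its root --- say, that the average process of $\hwedge_*^C$ in strip coordinates (cf.\ Section~\ref{subsub:notation}) is bounded by $K$ in absolute value on a fixed compact time interval, and that the $\nu_{\hwedge_*^C}$-length of a fixed boundary interval lies in $[K^{-1},K]$, for a large fixed $K$; since $\cW_*$ is a $\tfrac32\gamma$-quantum wedge this has fixed positive probability. On $A_1\times A_2$ the first vanishing time of the average process of $\hwedge_*^\Psi$ about $0$ is then $O(1)$, and the same is true for $\hwedge_*^C$ by construction, so $\Phi(\mathfrak b,\cW_*)$ lies between two universal constants; enlarging $K$ and $A_1$ (which only shrinks $c_0$), these can be arranged to be $10^{-3}$ and $10^3$, which gives~(ii).

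It remains to prove $\P[\mathfrak b\in A_1\mid\wt E^1_\delta]\ge c_0$ uniformly in $\delta\in(0,\tfrac12)$; since $\mathfrak b$ and $\wt E^1_\delta$ are both measurable with respect to $(Z_t)_{t\in[0,1]}$, this is a statement about the correlated Brownian motion $Z$ alone. As $\delta\downarrow0$ the event $\wt E^1_\delta$ decreases to the null event that $Z$ remains in $\R_+^2$ throughout $[0,1]$, and the conditional law of $(Z_t)_{t\in[0,1]}$ given $\wt E^1_\delta$ converges weakly to the corresponding cone-conditioned Brownian motion started at the apex (a two-dimensional analogue of the Brownian meander). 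Transporting this convergence through the map $(Z_t)_{t\in[0,1]}\mapsto\mathfrak b$, the conditional law of $\mathfrak b$ converges weakly to a non-degenerate limit; taking $A_1$ to be open and charged by this limit with probability at least $2c_0$ gives $\P[\mathfrak b\in A_1\mid\wt E^1_\delta]\ge c_0$ for all sufficiently small $\delta$. For $\delta$ in a compact subinterval of $(0,\tfrac12)$ one instead uses that $\delta\mapsto\P[\mathfrak b\in A_1\mid\wt E^1_\delta]$ is continuous ($\wt E^1_\delta$ and $\wt E^1_\delta\cap G_1$ being monotone in $\delta$ with continuous and positive probabilities) and strictly positive, hence bounded below; a final shrinking of $c_0$ completes the proof.

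The crux is this last step: controlling the conditional law of $\mathfrak b$ given the vanishing-probability event $\wt E^1_\delta$ uniformly in $\delta$. It needs (a) a meander-type limit theorem identifying the weak limit of the ``buffered'' cone conditioning of $Z$, and (b) enough continuity of the (almost surely defined) mating-of-trees map $(Z_t)_{t\in[0,1]}\mapsto\mathfrak b$ to transfer this convergence to the surface $\mathfrak b$; alternatively one may bypass (b) by showing directly that $\P[\wt E^1_\delta\cap\{\mathfrak b\notin A_1\}]\le\tfrac12\,\P[\wt E^1_\delta]$, using the sharp order $\P[\wt E^1_\delta]\asymp\delta^{\sigma(\gamma)}$ supplied by Lemma~\ref{lem:cone-exponent}. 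The measurability of $\mathfrak b$ with respect to $(Z_t)_{t\in[0,1]}$, and the Koebe distortion estimates entering the second step, are routine once the appropriate non-degeneracy has been built into $A_1$.
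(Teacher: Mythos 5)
Your high-level architecture agrees with the paper's: split the conclusion into a $(Z_t)_{t\in[0,1]}$-measurable event and an independent event on $\cW_*$, and obtain the uniform conditional probability bound from a Shimura-type meander limit. However, your choice of events contains a genuine gap. The event $A_1$ is phrased in terms of the Euclidean geometry of the hull $\wt\eta'([0,1])$ in the unit radius embedding (containing a fixed half-ball, cone accessibility of $\wt\eta'(1)$). These are not functions of the bead $\mathfrak b$ alone: the embedding of $\mathfrak b$ as a subset of $\bbH$ is determined by the conformal welding of $\mathfrak b$ with $\cW_*$, so the Euclidean shape of the hull depends on both surfaces. Consequently $G_1=\{\mathfrak b\in A_1\}$ as you define it is neither measurable with respect to $(Z_t)_{t\in[0,1]}$ nor independent of $G_2$, and the factorization $\sigma_{\Psi,C}^{\bbH}(\cW_*)=\Phi(\mathfrak b,\cW_*)$ with independently controlled arguments does not go through. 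Relatedly, transferring the meander limit to the law of $\mathfrak b$ requires continuity of the map $(Z_t)_{t\in[0,1]}\mapsto\mathfrak b$, which you correctly flag as the crux but which is not available. The paper sidesteps both problems at once by taking $G_1=\{1\le L_1,R_1\le 2\}$, an explicit continuity set for the functional $(L_1,R_1)$ of $Z$, to which \cite[Theorem~2]{shimura-cone} applies directly and uniformly in $\delta$.

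The second missing ingredient is the deterministic Euclidean input that does the work your $A_1$ was meant to do. Writing $\wt\eta'([0,1])\cap\R=[x^-,x^+]$ in the unit radius embedding, the capacity estimate \cite[Equation~(3.14)]{lawler-book} forces $|\Psi(x^+)-\Psi(x^-)|\in[10^{-2},10^2]$ \emph{deterministically}, simply because the hull has diameter $1$ and lies in $\ol{B_1(0)}$; no event and no information about $\cW_*$ is needed. On $G_1\cap\wt E^1_\delta$ with $\delta<\tfrac12$, the quantum lengths $\nu_{\hwedge_*^\Psi}([0,\Psi(x^+)])$ and $\nu_{\hwedge_*^\Psi}([\Psi(x^-),0])$ lie in $[\tfrac12,3]$, and the event $G_2$ (explicit upper and lower bounds on $\nu_{\hwedge_*^C}$ of the fixed intervals $[-1,1]$, $[0,2]$, $[-2,0]$, which has positive probability independent of $\delta$ and is independent of $(Z_t)_{t\in[0,1]}$) then pins the Euclidean length of the corresponding interval in the circle average embedding of $\cW_*$ to $[2,4]$. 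The scaling factor is the ratio of the two Euclidean lengths, and $[10^{-3},10^3]$ follows by arithmetic. Your route via Koebe distortion of $\Psi$ and comparison of semicircle average processes would still need some $\cW_*$-independent handle on the Euclidean size of $\Psi(\partial\wt\eta'([0,1])\cap\bbH)$, and that is precisely what the capacity estimate supplies; without it, the argument does not close.
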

\begin{proof}
	Let $G_1 :=\{1\le L_1,R_1\le 2\} $. By~\cite[Theorem~2]{shimura-cone}, $\P[G_1\,|\, \wt E_\delta^1 ]\geq c_0$ for some $c_0>0$ independent of $\delta$. 
	
	Suppose $\mcl W$ has the unit radius embedding into $(\bbH,0,\infty)$. Let $x^-$ and $x^+$ be defined such that $\wt{\eta}'([0,1]) \cap \R=[x^-,x^+]$. By \cite[Equation (3.14)]{lawler-book} and the definitions of the unit radius embedding and the $\Psi$-embedding, we have $|\Psi(x^+)-\Psi(x^-)|\in [ 10^{-2},10^2]$. 
On the other hand, if $\delta\in (0,\frac12)$ then on $G_1\cap  \wt E_\delta^1$, we have $\nu_{\hwedge_*^\Psi}([0,\Psi(x^+)]),\nu_{\hwedge_*^\Psi}([\Psi(x^-),0])\in[\frac12,3]$.

Let $$G_2:=\{\nu_{\hwedge^C_*}([-1,1])<1/2 \} \cap \{\nu_{\hwedge^C_*}([0,2])\ge 3\} \cap \{ \nu_{\hwedge^C_*}([-2,0]) \ge 3  \}.$$ Then $G_2$ is independent of $(Z_t)_{t\in[0,1]}$ and $\P[G_2]\geq c_0$ for some (possibly smaller) $c_0>0$ independent of $\delta$. On $G_2$, the interval $[\Psi(x^-),\Psi(x^+)]$ after mapping to the circle average embedding of $\cW_*$ will contain $[-1,1]$ and be contained in $[-3,3]$. Therefore, on $ \wt E^1_\delta \cap G_1\cap G_2$, the scaling factor between $\hwedge^\Psi_*$ and $\hwedge^C_*$ lies in $[10^{-3},10^3]$.
\end{proof}

	The following lemma is a variant of \cite[Proposition~10.19]{wedges}. The proof follows from essentially the same argument, so we will only give a very brief sketch.
\begin{lemma}
\label{lem:smooth-positive}
	Let $K$ be a fixed constant and $\wt \phi : \BB H \rta [0,\infty)$ be a radially symmetric smooth function supported on $B_1(0)\setminus B_{e^{-1}}(0)$.
	 Suppose $\hwedge^C$ is the circle average embedding of a $\frac32\gamma$-quantum wedge into $(\BB H,0,\infty)$. For $t\in \R$, let $\hwedge^C_t=\hwedge^C(e^t\cdot)+Q\log |e^t\cdot| $. Let $\cG$ be the collection of conformal maps of the following form: $g:\bbH\setminus A\rta \bbH$, where $A$ ranges over all hulls with a tip $p\in \partial A\setminus \R$ in $\ol{\BB H}$ such that $0\in A\subset \ol{B_{r}(0)}$ for some $r\in [0,1]$, and $g$ satisfies $g(p)=0,g(\infty)=\infty$, and $\lim_{z\to\infty}g(z)/z\in [10^{-3},10^3]$. For $t_0\in \R$, let $F(t_0)$   be the event that the inner product
	$(\hwedge_t^{C},|(g^{-1})'|^2 \wt\phi \circ g^{-1} )$ is bigger than  $K$ for all $t\ge t_0$ and $g\in\cG$. Then $\lim_{t_0 \to\infty}\P[F(t_0)]=1$.
\end{lemma}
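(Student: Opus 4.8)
The plan is to adapt the proof of \cite[Proposition~10.19]{wedges}: decompose the rescaled field into its radial‑average part about $0$, which blows up to $+\infty$ uniformly on the relevant region, plus a fluctuation part, which is negligible by comparison. Concretely, write $\hwedge^C_t = H^0_t + H^\dagger_t$, where $H^0_t$ is the projection of $\hwedge^C_t$ onto functions on $\bbH$ that are radially symmetric about $0$ and $H^\dagger_t$ is the orthogonal complement; note $H^\dagger_t = \hwedge^{C,\dagger}(e^t\,\cdot)$ (the $\dagger$‑projection kills the radial terms $Qt$ and $Q\log|\cdot|$ in the definition of $\hwedge^C_t$), where $\hwedge^{C,\dagger}$ is the $\dagger$‑part of a free‑boundary GFF. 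Writing $\psi_g := |(g^{-1})'|^2\,\wt\phi\circ g^{-1}$, we will show that for every $g\in\cG$ the radial contribution $(H^0_t,\psi_g)$ grows at least linearly in $t$, uniformly in $g$, while $(H^\dagger_t,\psi_g)$ is of lower order, both on events of probability tending to $1$ as $t_0\to\infty$.

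For the radial part, using the coordinate change formula~\eqref{eq:lqg-coord} and the definition of the circle average embedding, the semicircle average of $H^0_t$ over $\partial B_\rho(0)\cap\bbH$ equals $\wh{\mcl B}_{2(t+\log\rho)}+a(t+\log\rho)$ whenever $\rho>e^{-t}$, where $a=Q-\tfrac32\gamma>0$ and $\wh{\mcl B}$ is the Brownian motion from the definition of the wedge, conditioned so that $\wh{\mcl B}_{2u}+au>0$ for all $u>0$. A standard Koebe/Loewner distortion estimate (see, e.g., \cite{lawler-book}), using that each $A$ has half‑plane capacity bounded by a universal constant (as $A\subset\ol{B_1(0)}$) together with the normalization $\lim_{z\to\infty}g(z)/z\in[10^{-3},10^3]$, shows there are universal $0<c_1<c_2$ with $\supp\psi_g\subset\{z\in\ol\bbH:c_1\le|z|\le c_2\}$ for all $g\in\cG$; of course $\psi_g\ge0$ and $\int_\bbH\psi_g=\int_\bbH\wt\phi$ by change of variables. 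By the strong law of large numbers for $\wh{\mcl B}_{2u}+au$ (drift $a>0$) there is an a.s.\ finite $S_0$ with $\wh{\mcl B}_{2u}+au\ge\tfrac a2 u$ for all $u\ge S_0$, so on the event $\{S_0\le t_0+\log c_1\}$ — which has probability tending to $1$ — we get, for all $t\ge t_0\vee\log(1/c_1)$ and all $g\in\cG$,
\eqbn
(H^0_t,\psi_g)=\int_\bbH H^0_t\,\psi_g\ \ge\ \tfrac a2(t+\log c_1)\int_\bbH\wt\phi .
\eqen

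For the $\dagger$‑part, since the free‑boundary GFF is scale invariant modulo an additive constant, $\hwedge^{C,\dagger}$ is exactly scale invariant, so the law of the process $\big((H^\dagger_t,\psi_g)\big)_{g\in\cG}$ does not depend on $t$. Granting that $\Var(H^\dagger_t,\psi_g)$ is bounded above by a constant independent of $t$ and $g\in\cG$, and that $\cG$ is compact (with the metric induced by $g\mapsto\psi_g$), the Borell--TIS inequality shows that $\sup_{t\in[n,n+1],\,g\in\cG}|(H^\dagger_t,\psi_g)|$ has uniformly‑in‑$n$ Gaussian upper tails above a finite constant. Fixing $\eps\in(0,\tfrac a2\int_\bbH\wt\phi)$, a union bound over integers $n\ge t_0$ then yields $\P[\exists\,t\ge t_0,\ g\in\cG:|(H^\dagger_t,\psi_g)|>\eps t]\to0$ as $t_0\to\infty$. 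Combining with the radial bound, on an event of probability $\to1$ we have $(\hwedge^C_t,\psi_g)=(H^0_t,\psi_g)+(H^\dagger_t,\psi_g)\ge\tfrac a2(t+\log c_1)\int_\bbH\wt\phi-\eps t>K$ for all $t\ge t_0$ and $g\in\cG$ once $t_0$ is large, i.e.\ $\P[F(t_0)]\to1$.

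The main obstacle is the distortion estimate underlying the uniform bound on $\Var(H^\dagger_t,\psi_g)=\iint_{\bbH\times\bbH}G^\dagger(w,w')\psi_g(w)\psi_g(w')\,dw\,dw'$ over $g\in\cG$ (with $G^\dagger$ the covariance kernel of $\hwedge^{C,\dagger}$): after the change of variables $w=g(z)$ this becomes $\iint_{(\bbH\setminus A)^2}G^\dagger(g(z),g(z'))\wt\phi(z)\wt\phi(z')\,dz\,dz'$, and since $\supp\wt\phi$ can come arbitrarily close to $\partial A$ one must control the behavior of $g$, in particular of $|g'|$, near the hull, uniformly over all hulls $A\subset\ol{B_1(0)}$ satisfying the normalization; this is precisely where the hypotheses $A\subset\ol{B_1(0)}$ and $\lim_{z\to\infty}g(z)/z\in[10^{-3},10^3]$ are used essentially, and it is handled by the same Koebe‑distortion and Loewner‑energy type estimates as in \cite[Proposition~10.19]{wedges}. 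Everything else is routine: the strong law for Brownian motion with drift, scale invariance of the GFF, the Borell--TIS inequality, and a Borel--Cantelli argument over dyadic times.
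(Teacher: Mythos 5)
Your overall strategy is sound and is essentially an expanded version of the paper's (very terse) argument: the paper sets $m_t:=\inf_{g\in\cG}(\hwedge^C_t,|(g^{-1})'|^2\wt\phi\circ g^{-1})$, invokes Gaussian-process estimates (\cite[Proposition~10.18]{wedges}) to control $\inf_{t\in[0,1]}m_t$, and concludes via stationary increments and the Birkhoff ergodic theorem that $m_t\to\infty$ a.s.; you instead make the positive drift explicit by splitting into the radial part (strong law for Brownian motion with drift $a>0$) and the $\dagger$-part (exact scale invariance, Borell--TIS, union bound over unit time blocks). That substitution is legitimate, and deferring the uniform variance/entropy bound over $\cG$ to the distortion estimates behind \cite[Propositions~10.18--10.19]{wedges} is the same black box the paper uses.

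There is, however, one concretely false step. Writing $\psi_g:=|(g^{-1})'|^2\wt\phi\circ g^{-1}$, you claim universal constants $0<c_1<c_2$ with $\supp\psi_g\subset\{c_1\le|z|\le c_2\}$ for all $g\in\cG$. The hulls $A$ in $\cG$ are only required to satisfy $A\subset\ol{B_r(0)}$ for some $r\in[0,1]$, so $A$ may intersect $\supp\wt\phi\subset B_1(0)\setminus B_{e^{-1}}(0)$. Since $g$ maps $\partial A$ (as prime ends) into $\R$ with the tip $p$ going to $0$, points of $\supp\wt\phi$ near $p$ are sent arbitrarily close to $0$; hence no uniform $c_1$ exists, and moreover $\int\psi_g=\int_{\BB H\setminus A}\wt\phi$ need not equal $\int\wt\phi$ (it can even vanish, e.g.\ for $A=\ol{B_1(0)}\cap\ol{\BB H}$, in which case the lemma's conclusion itself fails for $K\ge 0$; some implicit restriction is needed, and in the paper's application the hulls are contained in $B_{e^{-2}}(0)$ and hence disjoint from $\supp\wt\phi$). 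Consequently your bound $(H^0_t,\psi_g)\ge\tfrac a2(t+\log c_1)\int\wt\phi$ does not follow as written. The repair, once degenerate hulls are excluded, is: for every radius $\rho>e^{-t}$ the conditioning $\wh{\mcl B}_{2u}+au>0$ already gives $H^0_t\ge 0$, so only $\{|w|\le e^{-t}\}$ can contribute negatively, and there the square-root singularity of $g^{-1}$ at the tip gives $\psi_g$-mass $O(e^{-t})$; one then needs a uniform positive lower bound on the $\psi_g$-mass of $\{|w|\ge c_1\}$ (a Koebe/harmonic-measure argument using that a fixed part of $\supp\wt\phi$ stays at positive distance from $A$) to recover the linear-in-$t$ growth. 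As written, this is a genuine gap rather than a cosmetic omission, since the lower bound on the radial term is the engine of the whole proof.
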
 
\begin{proof}
 Let $m_t :=\inf\{(\hwedge_t,|(g^{-1})'|^2\phi \circ g^{-1} ): g\in \cG \}$. By results from Gaussian analysis (see \cite[Proposition~10.18]{wedges} and the discussion afterwards), 
the random variable $\inf\{m_t:t\in [0,1] \}$
 has finite variance.
 Since $m_t$ has stationary increments, the Birkhoff ergodic theorem implies
$\lim_{t\to\infty}m_t=\infty$ $\textrm{a.s.}$
\end{proof}

\begin{figure}[ht!]
\begin{center}
\includegraphics[scale=1]{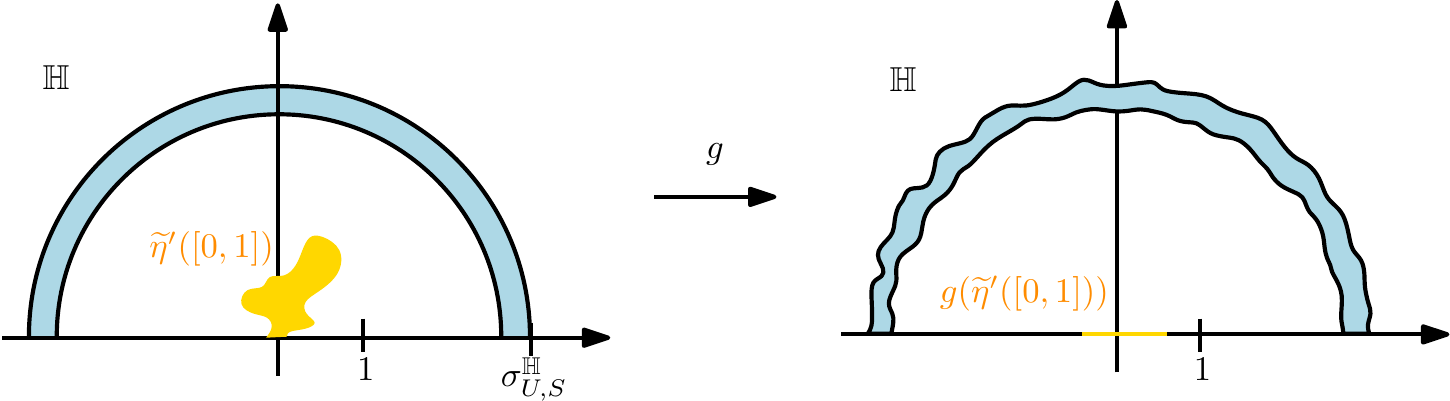}
\end{center}
\caption{Unit radius embedding of $\mcl W$ (left) and circle average embedding of $\mcl W_*$ (right). The boundary of $\wt{\eta}'([0,1])$ on the left divides $\mcl W$ into two independent quantum surfaces: $\mcl U = (\hwedge|_{\wt{\eta}'([0,1])},\wt{\eta}'([0,1]),0,\wt{\eta}'(1),\inf (\wt{\eta}'([0,1])\cap\R), \sup (\wt{\eta}'([0,1])\cap\R))$ 
and $\mcl W_* = (\hwedge|_{\wt{\eta}'([1,\infty))},\wt{\eta}'([1,\infty)),\wt{\eta}'(1),\infty)$. The occurrence of $\wt E_\delta^1$ depends only on $\mcl U$, while the diameter under the smooth centering embedding of $\wt{\eta}'([0,1])$ depends mainly on $\mcl W_*$. We use the independence of $\mcl U, \mcl W_*$ to establish Lemma~\ref{lem:distortion}, which implies that $g\in\mcl G$ on the event $G_1\cap G_2$ (see the statement of Lemmas~\ref{lem:distortion} and~\ref{lem:smooth-positive} for the notation). Then we approximate the smoothed drifted circle average for large radii on the left figure (in blue) by a 'distorted' average over the corresponding region on the right figure, and use Lemma~\ref{lem:smooth-positive} to conclude that for sufficiently large radii this is positive with uniformly positive probability conditioned on $\wt E^1_\delta$. This result is the content of Proposition~\ref{prop:wedge-smooth}.} 
\label{fig1}
\end{figure}

\begin{proof}[Proof of Proposition~\ref{prop:wedge-smooth}]
	Let $G_1,G_2 $, and $c_0$ be chosen so that the conclusion of Lemma~\ref{lem:distortion} holds. Throughout the proof we will assume $G_1\cap G_2\cap \wt E^1_\delta$ occurs.

Suppose $( \hwedge^U  , \wt\eta')$ is the unit radius embedding  of $(\mcl W , \wt \eta')$ into $(\bbH,0,\infty)$. Note that $\wt\eta'([0,1]) \subset \ol {B_1(0)}$ in this embedding. Let $g: \bbH\setminus \wt{\eta}'([0,1])  \rta  \bbH $  be such that $g(\wt\eta(1)) = 0$, $g(\infty) = \infty$ and 
\begin{equation}
\label{eq:UCchange}
\hwedge^U\circ g^{-1} + Q\log |(g^{-1})'|=\hwedge^C_*,
\end{equation}
where $\hwedge^C_*$ is the circle average embedding of $\mcl W_*$ into $(\bbH,0,\infty)$. By condition~\eqref{item:distortion-scaling} in Lemma~\ref{lem:distortion}, we have $g\in \cG$ where $\cG$ is defined as in Lemma~\ref{lem:smooth-positive}. 

Let $\phi$ be as in the definition of the smooth centering embedding. 
Let $\wt\phi :\BB H\mapsto[0,\infty)$ be the radially symmetric bump function on $\BB H$ such that $\pi\wt\phi(e^{-t})e^{-2t} = \phi(t)$ for each $t\in \R$. Then using polar coordinates and the canonical transformation between~$\bbH$ and~$\cS$, we have for all $t\in\R$ that
\eqb \label{eq:X-to-prod}
(\hwedge^U(e^t \cdot)+Q\log|e^t \cdot|,\wt\phi) = \int_{-\infty}^\infty X^U_s \phi (s+t) ds,
\eqe
where $(X^U_s)_{s\in \R}$ is the average process of $\hwedge^U$ over vertical lines in  $(\cS,+\infty,-\infty)$ coordinates. Define $\wt \phi_t(\cdot) :=e^{-2t}\wt\phi(e^{-t}\cdot)$. Note that $\wt \phi_t$ is a radially symmetric function on $\bbH$. By the coordinate change formula for the inner product $(\cdot,\cdot)$,
\begin{align*}
&(\hwedge^U(e^t \cdot)+Q\log|e^t \cdot|,\wt\phi)=(\hwedge^U(\cdot)+Q\log|\cdot|,\wt\phi_t) \notag \\
&=(\hwedge^U(g^{-1}(\cdot))+Q\log|g^{-1}(\cdot)|, 
|(g^{-1})'(\cdot)|^2 \wt\phi_t\circ g^{-1})
\end{align*}
Recalling~\eqref{eq:UCchange}, this is equal to
\begin{align} \label{eq:prod2}
&(\hwedge^C_* -Q\log|(g^{-1})'| +Q\log|g^{-1}(\cdot)|, 
|(g^{-1})'(\cdot)|^2 \wt\phi_t\circ g^{-1}) \notag \\
&=(\hwedge^C_*+Q\log|\cdot|, 
|g^{-1}(\cdot)|^2 \wt\phi_t\circ g^{-1})\notag \\
&+(-Q\log|(g^{-1})'(\cdot)| +Q\log|g^{-1}(\cdot)|-Q\log|\cdot|, 
|(g^{-1})'(\cdot)|^2 \wt\phi_t\circ g^{-1}).
\end{align}
By the Taylor expansion of $g$ near $\infty$, $|g(z)/z|$ is bounded from above and below outside of $\bbH\setminus B_{e^2}(0)$. Therefore there exists a constant $K_1 > 0$ such that for all $t\ge 2$ we have
\[ \big| (\log|g^{-1}(\cdot)|-\log|\cdot|, 
|(g^{-1})'(\cdot)|^2 \wt\phi_t\circ g^{-1}) \big |  = \big |(\log|\cdot|-\log|g(\cdot)|, \wt\phi_t) \big | \le K_1.\]
On the other hand, since $g\in \cG$, distortion estimates imply that $|g'(\cdot)|$ has universal upper and lower bounds on $\bbH\setminus B_{e^2}(0)$. Since $|(g^{-1})'(g(\cdot))|= 1/|g'(\cdot)|$, there exists a constant $K_2 > 0$ such that for all $t\ge 2$ we have
\[ \big | (\log|(g^{-1})'(\cdot)|, 
|(g^{-1})'(\cdot)|^2 \wt\phi_t\circ g^{-1}) \big | = \big |(\log|(g^{-1})'\circ g(
\cdot)|,  \wt\phi_t) \big | \le K_2 (1,\wt\phi_t)=K_2.\]
Thus the absolute value of the second term on the right in~\eqref{eq:prod2} is at most some deterministic constant $K$ for all  $t\ge 2$.
 
Let $F(t_0)$ be defined as in Lemma~\ref{lem:smooth-positive} for the quantum wedge $\cW_*$ and this choice of constant $K$.  Then
we can pick a deterministic $t_0 \geq 2$ large enough such that $\P[F(t_0)]\ge 1-c_0/2$.
 
Let $G:=G_1\cap G_2\cap F(t_0)$.
By independence of $\{G_1 ,  \wt E_\delta^1\}$ and $\{G_2,F(t_0)\}$ together with condition~\eqref{item:distortion-prob} in Lemma~\ref{lem:distortion}, we have $$\P[ G \,|\, \wt E^1_\delta] \geq c_0^2/2.$$
For $t\geq  0$, let $g_t(z):=e^{-t}g(e^tz)$ so that $g_t : \BB H\setminus e^{-t} \eta'([0,1]) \rta \BB H$. Then for $t\ge 0$, we have $g_t\in \cG$ and 
\eqbn
(\hwedge^C_*+Q\log|\cdot|, |(g^{-1})'(\cdot)|^2 \wt\phi_t\circ g^{-1})
=(\hwedge^C_*(e^t\cdot)+Q\log|e^t\cdot|, |(g_t^{-1})'(\cdot)|^2 \wt\phi\circ g_t^{-1}).
\eqen 
By definition of $F(t_0)$, on $G$ this latter quantity is at least $K$ for each $t\geq t_0$. 
By combining this with~\eqref{eq:X-to-prod} and~\eqref{eq:prod2}, we obtain
\[
\int_{-\infty}^\infty X^U_s \phi (s+t) ds \geq 0 \quad \forall t\ge t_0 .
\]
Therefore $\sigma_{S,U}^{\cS}(\cW)  \le t_0$, which means  $\sigma_{S,U}^{\bbH}(\cW)\ge e^{-t_0}$.  In light of Remark~\ref{rmk:ratio}, the constants $c = c_0^2/2$ and $r=e^{-t_0}$ and the event $G$ meet the requirements in Proposition~\ref{prop:wedge-smooth}.
\end{proof}

\bibliography{cibibshort,cibib,ref}
\bibliographystyle{hmralphaabbrv}
\end{document}